\documentclass[12pt,reqno]{amsart}
\usepackage{amssymb,amsmath,amsthm,amscd,enumerate,bbm}

\hoffset=-16mm
\textwidth 15.5 cm
\sloppy


\DeclareMathOperator{\sign}{sign}
\DeclareMathOperator{\Ran}{Ran}

\DeclareMathOperator{\spec}{spec}
\DeclareMathOperator*{\slim}{s-lim}

\renewcommand\Im{\text{\rm Im}\,}
\renewcommand\Re{\text{\rm Re}\,}
\newcommand\even{\text{\rm even}\,}
\newcommand\odd{\text{\rm odd}\,}
\newcommand{\abs}[1]{\lvert#1\rvert}

\newcommand{\diag}{\operatorname{diag}}


\newcommand{\q}{\quad}

\newcommand{\ti}{\tilde}
\newcommand{\wh}{\widehat}
\newcommand{\ov}{\overline}
\newcommand{\wt}{\widetilde}


\newcommand{\R}{{\mathbb R}}
\newcommand{\C}{{\mathbb C}}
\newcommand{\bbT}{{\mathbb T}}
\newcommand{\bbZ}{{\mathbb Z}}

\newcommand{\calH}{{\mathcal H}}

\newcommand{\vk}{\varkappa}



\numberwithin{equation}{section}


\theoremstyle{plain}
\newtheorem{theorem}{\bf Theorem}[section]
\newtheorem{lemma}[theorem]{\bf Lemma}

\newtheorem{assumption}[theorem]{\bf Assumption}
\newtheorem{corollary}[theorem]{\bf Corollary}

\theoremstyle{definition}
\newtheorem{definition}[theorem]{\bf Definition}

\theoremstyle{remark}
\newtheorem*{remark*}{\bf Remark}
\newtheorem{remark}[theorem]{\bf Remark}

\let\cal\mathcal

\let\Bbb\mathbb


\begin{document}

\title[Self-adjoint Hankel operators]{Spectral and scattering theory of self-adjoint Hankel operators
with piecewise continuous symbols}

\author{Alexander Pushnitski}
\address{Department of Mathematics,
King's College London, 
Strand, London, WC2R~2LS, U.K.}
\email{alexander.pushnitski@kcl.ac.uk}

\author{Dmitri Yafaev}
\address{Department of Mathematics, University of Rennes-1, 
Campus Beaulieu, 35042, Rennes, France}
\email{yafaev@univ-rennes1.fr}

\thanks{  Our collaboration has become possible through the hospitality and financial support 
of the Departments of Mathematics of the University of Rennes 1 and of KingÕs College London.
The London Mathematical Society Scheme 4 grant is also gratefully acknowledged. }

\begin{abstract}
We develop the spectral and scattering theory for  self-adjoint Hankel operators $H$ with piecewise continuous symbols. In this case every jump   of the symbol gives rise to a band of the absolutely continuous spectrum of $H$. 
We construct wave operators relating simple  ``model" (that is, explicitly diagonalizable) Hankel  operators for each jump and the given Hankel operator $H$. We show that the set of all these wave operators is asymptotically complete. This determines the absolutely continuous part of $H$. We also prove that    the singular continuous 
spectrum of $H$ is empty and that its eigenvalues may accumulate only to  ``thresholds"  in the absolutely continuous 
spectrum. All these results are reformulated in terms of Hankel operators realized as matrix or integral operators.
\end{abstract}

\subjclass[2000]{Primary 47A40; Secondary 47B25}

\keywords{Hankel operators,  discontinuous   symbols,  model operators, multichannel scattering,  wave operators,  the absolutely continuous and singular spectra}

 
\maketitle

\section{Introduction} \label{sec.a}

 
{\bf 1.1}.
Hankel  operators (see the books \cite{NK,Pe, Po}) admit various unitary equivalent descriptions. One of the common ones is the definition of Hankel operators $H$ in the Hardy space ${\Bbb H}_+^{2}({\Bbb T})\subset L^2 ({\Bbb T})$ of functions analytic inside the unit circle ${\Bbb T}$. Let $\omega \in L^\infty ({\Bbb T})$. Then for $f\in{\Bbb H}_+^{2}({\Bbb T})$, the function $(H f)(\mu)$,  $\mu\in {\Bbb T}$, is defined as the orthogonal projection in $ L^2 ({\Bbb T})$ of the function $\omega(\mu) f(\bar{\mu})$ onto the subspace ${\Bbb H}_+^{2}({\Bbb T})$. Of course, Hankel operators $H=H(\omega)$ with symbols $\omega \in L^\infty ({\Bbb T})$ are bounded.

 It is easy to see that $H$ is compact if  $\omega \in C({\Bbb T})$. On the contrary,   the jumps
 \begin{equation}
\varkappa (a)=\lim_{\varepsilon\to +0} \omega (ae^{i\varepsilon})-\lim_{\varepsilon\to +0} \omega (ae^{-i\varepsilon}),
\q a\in{\Bbb T},
\label{eq:XX1}\end{equation}
(one supposes here that the limits exist but are not equal) of the symbol yield  bands of the essential spectrum $\spec_{\rm ess} (H)$. To be more precise, it was shown by S.~R.~Power
in \cite{Po1, Po} that
 \begin{multline*}
\spec_{\rm ess} (H)=[0, (2i)^{-1}\varkappa (1)] \cup [0, (2i)^{-1}\varkappa (-1)] \cup
\\
  \bigcup_{\Im a_{j} > 0} [-(2i)^{-1} (\varkappa (a_{j}) \varkappa (\bar{a}_{j}))^{1/2}, (2i)^{-1} (\varkappa (a_{j}) \varkappa (\bar{a}_{j}))^{1/2} ]
 \end{multline*}
where $[\alpha,\beta]$ is the interval between the points $\alpha,\beta\in {\Bbb C}$ (we do not distinguish $[\alpha,\beta]$ and $[ \beta, \alpha]$). Note that the contribution of a jump at a complex point $a$ is nontrivial only if the symbol $\omega(\mu)$ has jumps at both points $a$ and $\bar{a}$.

Much more complete information (see subs.~3.4, for more detailes) can be obtained about the modulus $|H|=\sqrt{H^* H}$ of $H$. It is shown in \cite{PY1}  that the absolutely continuous (a.c.) 
spectrum\footnote{In the right-hand side of \eqref{eq:XX1x} as well as in all relations of this type, 
we use the convention that every term denotes a spectral band of multiplicity one 
in the a.c. spectrum.} of $|H|$
 \begin{equation}
\spec_{\rm ac} (|H|)=   \bigcup_{ a_{j}\in {\Bbb T}}\,  [0, 2^{-1} |\varkappa (a_{j}) | ].
\label{eq:XX1x}\end{equation}
It is assumed in \cite{PY1} that $\omega(\mu)$ has  finitely many jumps $a_{j}\in {\Bbb T}$ so that the union in \eqref{eq:XX1x} also has finite number of terms. Furthermore, the singular continuous spectrum of $|H|$ is empty and its eigenvalues different from $0$ and the points $ 2^{-1} |\varkappa (a_{j})|$ have finite multiplicities and may accumulate only to these points.

\medskip

{\bf 1.2.}
Our goal here is to develop the spectral and scattering theory for  {\it self-adjoint } Hankel operators with piecewise continuous symbols.  
In the self-adjoint case we have $\omega(\bar{\mu} )= \ov{\omega (\mu)}$. Therefore if $\omega(\mu)$ has a jump $\varkappa (a)$ at some point $a\in{\Bbb T}$, then it also has the jump $\vk(\bar{a})=-\ov{\varkappa(a)}$ at the point $\bar{a}$; in particular, $\vk(\pm 1)$ are necessarily imaginary numbers. We suppose that $\omega(\mu)$ has a finite number of jumps and that $\omega(\mu)$ is a continuous function away from its jumps. 
We assume that at every jump $a$, the one-sided limits $\omega(a_{\pm})$ of $\omega(\mu)$ as $\mu\to a_{\pm}= a e^{\pm i0}$ exist and satisfy the logarithmic H\"older continuity condition
\begin{equation}
 \omega(\mu)-\omega(a_{\pm}) = O (\big| \ln |\mu-a| \big| ^{-\beta_{0}}), \q \beta_{0}>0.
\label{eq:XX3}\end{equation}
This condition is   slightly stronger than the simple one-sided continuity of $\omega(\mu)$ but is of course weaker than the H\"older continuity. One of our main results (see Theorem~\ref{thm.g1a}) can be formulated as follows.


$1^0$ {\it  If    $\beta_{0}>1$, then 
\begin{equation}
\spec_{\rm ac} (H)= [ 0, (2i)^{-1} \varkappa (1)  ] \cup  [ 0, (2i)^{-1} \varkappa (-1)  ] \cup \bigcup_{ \Im a_{j} > 0} [-2^{-1} |\varkappa (a_{j}) |, 2^{-1} |\varkappa (a_{j}) | ] .
\label{eq:XX4}\end{equation}

  $2^0$  If   $\beta_{0}> 2$, then  the singular continuous spectrum of $H$ is empty and its eigenvalues distinct from $0$ and from the points $(2i)^{-1} \varkappa (1)  $,  $(2i)^{-1} \varkappa (-1)  $ and $ \pm 2^{-1} |\varkappa (a_{j})|$ where  $\Im a_{j} >  0$ have finite multiplicities and may accumulate only to these points. }
  
  \smallskip
  
  Relation \eqref{eq:XX4} shows that every real  jump or  a pair of complex conjugate jumps of the symbol $\omega(\mu)$ yields its own band of the multiplicity one  a.c. spectrum. Of course  if $\varkappa (1) =0$ or $\varkappa (-1) =0$, then the corresponding term  in \eqref{eq:XX4}  disappears.

   Under different assumptions on $ \omega(\mu)$,  relation \eqref{eq:XX4}  was obtained earlier by J.~S.~Howland  in \cite{Howl0}
 who used the trace class method in scattering theory. We rely on a multichannel scheme in the so-called smooth approach exposed in our preceding publication   \cite{PY}.  The results of \cite{PY} can be considered as a simplified version
of the famous Faddeev's solution  \cite{Fadd} of the three particle quantum problem.  In our case different channels of scattering are described in terms
  of    ``model''    operators for all points of discontinuity of $\omega$. Each model operator  $H_{+}$, $H_{-}$ and $H_{j} $  corresponds to jumps of $\omega$ at the points $+1$, $-1$ and $(a_{j}, \bar{a}_{j})$,  $\Im a_{j } >0$, and each of them yields one of the bands of the a.c. spectrum in the right-hand side of \eqref{eq:XX4}.

  Our conditions on  the symbol $\omega(\mu)$
  are much weaker than those of J.~S.~Howland. This is important for reformulations of our results 
 for Hankel operators realized as matrix and integral operators. 
   
\medskip

{\bf 1.3.}
Our approach shows  that the resolvent  of the operator $H$ sandwiched between appropriate weight functions has boundary values on the real axis except for the thresholds $0$,  $(2i)^{-1} \varkappa (1)  $,  $(2i)^{-1} \varkappa (-1)  $ and $ \pm 2^{-1} |\varkappa (a_{j})|$ for all complex points of jumps $a_j$. Results of this type are known as the limiting absorption principle. 

Another important ingredient of our approach is the construction of the   wave operators for model operators 
$H_{+}$, $H_{-}$, $H_{j} $ and the operator $H$. They play the same role as the wave operators for different channels of scattering in the three particle  problem. Thus   in the case of several jumps of the symbol, the scattering problem for Hankel operators becomes multichannel. Similarly to the three particle  problem, the ranges of  different  wave operators   are orthogonal to each other, and their orthogonal sum coincides with the a.c.  subspace  of the operator $H$. The last result is known as the asymptotic completeness of wave operators. It directly implies relation \eqref{eq:XX4} but also contains   information about the asymptotics of $\exp(-iHt)f$ as $t\to\pm \infty$.

The basis for the construction of model operators  is an explicit diagonalization of some simple Hankel operator.  We recall that Hankel operators can be realized as integral operators in the space $L^2 ({\Bbb R}_{+})$ with kernels ${\bf h}(t+s)$ which depend only on the sum of variables $t,s\in {\Bbb R}_{+}$. We rely on the Hankel operator  $\cal M$ with    ${\bf h}(t)=\pi^{-1}(t+2)^{-1}$ considered  by F. G. Mehler in \cite{Me}. Due to a slow decay of this function as $t\to \infty$ the operator $\cal M$ is not compact. Actually, it has the simple purely a.c. spectrum coinciding with the interval $[0,1]$. In terms of the symbol $\psi (\nu)$ (the Fourier transform of $(2\pi)^{1/2}{\bf h} (t)$), this fact is a consequence (cf. relation \eqref{eq:XX4}) of the jump $2i$ of $\psi (\nu)$ at the point $\nu=0$.

 Model operators $H_{+}$ and $H_{-}$ are directly constructed in terms of the   operator $\cal M$. The construction of 
 model operators $H_{j}$ for pairs of complex jumps is much  more complicated. The operators $H_{j}$ can also be realized as Hankel operators but their symbols are $2\times 2$ matrix valued functions so that they act in the direct sum
   ${\Bbb H}_+^{2}({\Bbb T}) \oplus {\Bbb H}_+^{2}({\Bbb T})$. 
   
    The construction of scattering theory for the pairs $(H_{+} ,H)$, $(H_{-} ,H)$ and $(H_{j} ,H)$ relies on the following arguments. We show that with an appropriate choice of model operators the symbol of the operator
   \[
\wt{H}= H-H_{+} -H_{-} -\sum_{\Im a_{j}>0} H_{j}
\]
has no jumps and so it is, in some sense, negligible. To be more precise, we establish a factorization $\wt{H}= Q^* \wt{K} Q$ where $\wt{K} $ is compact and $Q$ is smooth with respect to all operators $H_{+}$, $H_{-}$ and $H_{j}$. Roughly speaking, this means that $Q \vartheta\in L^2 ({\Bbb T})$ for all eigenfunctions $\vartheta$ (of the continuous spectra) of   $H_{+}$, $H_{-}$ and $H_{j}$. Similar factorizations are true for all   products $H_{+} H_{-}$, $H_{\pm} H_{j}$ and $H_{j} H_{k}$ where $j\neq k$. Here  we use the fact that the singularities of the symbols of the operators $H_{+}$, $H_{-}$ and $H_{j}$ are disjoint. These analytic results allow us to verify the assumptions of  \cite{PY}  which, in particular, yields  spectral results on the operator $H$ stated above.

Note that a similar scheme has been used by S.~R.~Power in his study of the essential spectrum of $H$ in \cite{Po1, Po} where it was however sufficient to verify the compactness of the operators $\wt{H}$, $H_{+} H_{-}$, $H_{\pm} H_{j}$ and $H_{j} H_{k}$ where $j\neq k$.  For the  study of the a.c. spectrum of $H$,  J.~S.~Howland required  in \cite{Howl0}  that these operators belong to the trace class.

An important issue in our approach is the choice of the class of smooth operators $Q$. As $Q$, we choose the operator of multiplication by a function $q(\mu)$ vanishing in singular points of the symbol $\omega(\mu)$. This choice is well adapted to the separation of   singularities of $\omega(\mu)$. Another possibility (see \cite{Y2}) is to choose for $Q$ the operator of multiplication by a function of $t$ tending to zero 
as $t\to 0$ and $t\to \infty$ in the realization of Hankel operators as integral operators in $L^2 ({\Bbb R}_{+})$.

\medskip

{\bf 1.4.}
Let us  now state our results for Hankel operators $\wh{H}$  in the space  ${\ell}_{+}^2= {\ell}^2({\Bbb Z}_{+})$ where $\wh{H}$ is defined by the formula
\begin{equation}
(\wh{H} u)_{n}= \sum_{m=0}^\infty h_{n+m} u_{m}, \q u= (u_{0}, u_{1},\ldots)\in {\ell}^2_{+}, \q h_n = \bar{h}_n, \q {\Bbb Z}_{+}.
\label{eq:HF}\end{equation}
Recall that $\wh{H}$ is compact if $h_{n} = o(n^{-1})$  as $n\to\infty$. On the other hand, if 
$h_{n}= \pi^{-1}(n+1)^{-1}$, then  $\wh{H}$ (the Hilbert matrix) has the simple a.c. spectrum coinciding with $[0,1]$.
We assume that
\begin{equation}
h_{n}= (\pi n)^{-1} \big(\kappa_{+} + (-1)^n \kappa_{-}  + 2 \sum_{j=1}^{N_{0}} \kappa_{j}\sin (n \theta_{j}-\varphi_{j})\big) + O \big(n^{-1}(\ln n)^{-\alpha_{0}}\big) 
\label{eq:F}\end{equation}
as $n\to\infty$. Here   $\theta_{j} $ are distinct numbers in $ (0,\pi)$; the phases $\varphi_{j}\in [0,\pi)$  and the amplitudes $\kappa_+, \kappa_-, \kappa_{j}\in{\Bbb R}$ are arbitrary. The main result in this case is (see Theorem~\ref{L}):

 \smallskip

$1^0$  {\it  If    $\alpha_{0}>2$, then 
\begin{equation}
\spec_{\rm ac} (\wh{H})= [ 0, \kappa_{+}  ] \cup  [ 0, \kappa_-  ] \cup  \bigcup_{ j=1}^{ N_{0}} [-\kappa_{j} , \kappa_{j}  ] .
\label{eq:F1}\end{equation}

  $2^0$  If   $\alpha_{0}> 3$, then  the singular continuous spectrum of $\wh{H}$ is empty and its eigenvalues different from $0$ and the points $\kappa_{+} $,  $\kappa_{-}  $ and $ \pm \kappa_{j}$  have finite multiplicities and may accumulate only to these points.}
  
   \smallskip
 
Observe that the right-hand side of \eqref{eq:F} contains oscillations with different frequencies.
Formula \eqref{eq:F1} shows that every term in asymptotics \eqref{eq:F}  yields its own channel of scattering and that 
 there is no ``interference" between different terms. 
 
Note that the condition $  O \big(n^{-1}(\ln n)^{-\alpha_{0}}\big)$ on matrix elements  does not guarantee (for any $\alpha_{0}$)  that the corresponding Hankel operator   belongs to the trace class. So  J.~S.~Howland's results \cite{Howl0} do not cover the case of matrix elements  with asymptotics \eqref{eq:F}.

\medskip

{\bf 1.5.}
The paper is organized as follows. In Section~2 we recall  the results of our preceding publication \cite{PY} on multichannel scheme in scattering theory. Auxiliary  information about  Hankel operators is collected in Section~3. This information is used to state our results in different  representations of Hankel operators.

Main results are stated and proven in Section~6. The proofs consist of 2 ingredients. The first is the construction of    ``model''    operators for all points of discontinuity of $\omega$. This is carried out  in     Section~4 for real points and in     Section~5 for pairs of complex conjugate points. Here
  the  smoothness of some class of operators with respect to model operators is also 
   verified.       The second  ingredient consists of compactness results   about  Hankel operators sandwiched by singular weights.  We borrow these results from our previous publication \cite{PY1}.
    All our results can be extended to Hankel operators acting in the Hardy space ${\Bbb H}^2_{+}({\Bbb R})$ and to operators
        with operator-valued symbols.

   Finally, in Section~7 we state  our results for Hankel operators realized    
 as infinite matrices in the space ${\ell}_{+}^2 $ and as integral operators in the space $L^2({\Bbb R}_{+})$.

\section{Multichannel scheme} \label{sec.b}


In the first two subsections, we collect  some background facts from   scattering theory; see, e.g., the book \cite{Yafaev},  for a detailed presentation. In the last subsection we recall the results of our paper \cite{PY} which will be used here.

\medskip

{\bf 2.1.}
Let $H $ be a self-adjoint operator in a Hilbert space $\calH$,
and let $E(\cdot )=E(\cdot; H)$ be  its spectral family. We denote by $  {\cal H}^{({\rm p})} (H)$ the subspace of $\cal H$ spanned by all eigenvectors of the operator $H$ and by $  {\cal H}^{({\rm ac})} (H)$ its a.c. subspace; $ P^{({\rm ac})} (H)$ is the orthogonal projector onto $  {\cal H}^{({\rm ac})} (H)$; $ H^{({\rm ac})} $ is the restriction of $H$ onto $  {\cal H}^{({\rm ac})} (H)$.

   Suppose that     the spectrum of the operator $H$ is  a.c.   and  has a constant (possibly infinite) multiplicity $n$ on a bounded open interval $\Delta\subset{\Bbb R}$. We consider   a unitary mapping  
\begin{equation}
 {\sf F } :E (\Delta){\cal H}\rightarrow L^2(\Delta;{\cal N})
 =L^2(\Delta)\otimes {\cal N} , \q \dim {\cal N}=n,
\label{eq:DIntF}
\end{equation}
of the subspace $E (\Delta){\cal H}$ onto the space of vector-valued functions of  $\lambda\in \Delta$  
with values in ${\cal N}$.  Assume that   this mapping transforms
$H$   into the  operator $\mathsf{A}_{\Delta}$ of multiplication  by $\lambda$ in the space $L^2(\Delta; {\cal N})$, that is,
\[
( {\sf F } H f)(\lambda)=  \lambda ({\sf F } f)(\lambda), \q
f\in E (\Delta){\cal H}, \q \lambda\in \Delta .
 \]
 Along with
$L^2(\Delta; {\cal N})$, we consider the space 
$C^\gamma(\Delta; {\cal N})$, $\gamma\in (0,1]$, of H\"older
continuous vector-valued functions. We set ${\sf F } f=0$ for $f\in E ({\Bbb R}\setminus\Delta){\cal H}$.

  
\begin{definition}\label{strsm}
 Let $Q$ be a bounded   operator in the space $  {\cal H}  $. The operator $Q$ is called strongly $H$-smooth on an interval $\Delta$   with an exponent
$\gamma \in (0,1] $ if,  for some diagonalization $ {\sf  F }$ of the operator $E (\Delta) H$, the condition
\begin{equation}
 {\pmb|} ( {\sf  F}  Q^* f)(\lambda){\pmb|} \leq C\|f\| ,\quad
 {\pmb|} ( {\sf  F} Q^*  f)(\lambda')- ({\sf  F}  Q^*  f)(\lambda){\pmb|} \leq C |\lambda'-\lambda|^\gamma \|f\|
\label{eq:ZG}\end{equation}
is satisfied for all $f\in {\cal H}$.
Here the constant $C$ does not depend on $\lambda$ and $\lambda'$ in compact subintervals of $\Delta$.
\end{definition}

   Definition~\ref{strsm} depends   on the choice of mapping \eqref{eq:DIntF}, but in applications the operator ${\sf  F} $ emerges   naturally.
 For a strongly $H$-smooth operator $Q$, the operator $Z (\lambda;Q):{\cal H}\rightarrow {\cal N}$,
defined by the relation
\[
Z (\lambda; Q) f=({\sf  F}  Q^*  f)(\lambda), 
\]
  is bounded and depends H\"older continuously on $\lambda\in \Delta$.

 Assume that an operator $Q$ is strongly $H$-smooth. If an operator $B$ is bounded, then the product  $B Q$ is also strongly $H$-smooth. Let $U$ be a unitary operator in $\cal H$ and $\widetilde{H}= U^* H U$. Then the operator $\widetilde{Q}= Q U $  is  strongly $\widetilde{H}$-smooth for the diagonalization $\widetilde {\sf  F }=  {\sf F } U$ of $\widetilde{H}$.

 It is convenient to give also a ``global'' definition of  $H$-smoothness adapted to our purposes. 
 

\begin{definition}\label{strsmg}
Suppose that, apart from the point spectrum $\spec_{p} (H)$, the spectrum of a self-adjoint operator $H$ is a.c., has a constant multiplicity and coincides with    the closure of a finite  union $\Delta$ of disjoint  open intervals $\Delta^{(l)}=(\alpha_{l},\beta_{l})$, $l=1,\ldots, L$. Assume also  that $\spec_{p} (H) \cap\Delta=\varnothing$. A bounded operator $Q$ is called strongly  $H$-smooth if it is  strongly  $H$-smooth on all intervals $\Delta^{(l)}$.
\end{definition}

Under the hypothesis of the above definition, the set $\cal T$  of thresholds  of the operator $H$ is defined as the collection of all end points $\alpha_{1}, \beta_{1},\ldots, \alpha_{L}, \beta_L$.

 \medskip

{\bf 2.2.}
Suppose that for self-adjoint operators $H_{0}$ and $H$,
the strong limits
        \[
\slim_{t\to\pm\infty} e^{i Ht} e^{- i H_0 t} P^{\rm(ac)} (H_{0}) =: W_{\pm} (H,H_0)    
\]
  exist.  The operators $W_{\pm} (H,H_0)$ are known as wave operators. They are automatically isometric on the subspace ${\cal H}^{\rm(ac)} (H_{0})$, 
  enjoy the intertwining property  
 \[
 H W_{\pm} (H,H_0)=W_{\pm} (H,H_0)H_0 ,
 \]
 and their ranges $R (W_{\pm} (H,H_0) )\subset {\cal H}^{\rm(ac)} (H )$. 
 
 The wave operator $W_{\pm} (H,H_0)$ is called complete if   $R (W_{\pm} (H,H_0) )={\cal H}^{\rm(ac)} (H )$. The completeness of $W_{\pm} (H,H_0)$ is equivalent to the existence of $W_{\pm} (H_0, H)$; in this case  
  \[
 W_{\pm} (H_0,H) = W_{\pm}^* (H,H_0).
 \]
 
 Note also the multiplication theorem: if the wave operators $W_{\pm} (H,H_1)$ and $W_{\pm} (H_{1},H_0)$ exist, then the wave operator $W_{\pm} (H,H_0)$ also exists and
 \begin{equation}
 W_{\pm} (H,H_0)=W_{\pm} (H,H_1)W_{\pm} (H_{1},H_0).
\label{eq:mt}\end{equation}

\medskip

{\bf 2.3.}
Let $H_{1}$, \ldots, $H_{N}$ and $\wt H$ be bounded self-adjoint operators 
on a Hilbert space $\calH$. Our goal is to study the spectral properties of the operator
\begin{equation}
H=    H_{1} +\cdots + H_{N} + \wt{H} 
\label{eq:A}\end{equation} 
under certain smoothness assumptions on all  products $H_n H_m$, $n\neq m$, and on the operator $\wt{H} $. We suppose that all operators $H_{n}$, $n=1 , \ldots ,  {N}$, satisfy the conditions of Definition~\ref{strsmg} on a set $\Delta_{n}$ and that $0\not\in\Delta_{n}$. Note that in interesting cases $0 $ belongs to the closure of $\Delta_{n}$ at least for one $n$.
Let   $Q$ be a bounded operator on $\calH$ such that its kernel is trivial and its range $R(Q)$ is dense in  $\calH$.   We need the following

\begin{assumption}\label{assu} 
a. For all $n=1,\ldots, N$,  the operator $Q$ is  strongly $H_n$-smooth $($see Definitions~$\ref{strsm}$ and $\ref{strsmg})$  with an exponent $\gamma\in (0,1]$.

b. The operator $\wt{H}$ can be represented as
$\wt{H}=Q^* \wt{K} Q $ with a compact operator $\wt{K}$. 

c. For all $n,m\geq1$, $n\neq m$, 
the operators $H_n H_m$ can be represented as
\[
H_n H_m  =Q^*  K_{n,m} Q 
\]
where the operators $K_{n,m}$ are compact. 

d.  For all $n=1,\ldots,N$, the operators $Q  H_n Q^{-1}$ defined on the   set $R(Q)$ extend to  bounded operators. 
\end{assumption}

The spectral structure of the operator \eqref{eq:A}
 is described in the following assertion.   We denote by $ {\cal T}_n $ be the set of the  
 thresholds of the operator $H_n$ and put  ${\cal T}={\cal T}_{1}\cup\cdots \cup {\cal T}_{N}$. 
 Recall that by definition, ${\sf A}_{\Delta}$ is the operator of multiplication by the 
 independent variable in $L^2(\Delta)$. 

\begin{theorem}\label{main}\cite{PY}
Under Assumption~$\ref{assu}$ we have:

$1^0$    The  
operator $H^{\rm(ac)}$ is unitarily equivalent to the direct sum 
  \[
  {\sf A}_{\Delta_{1}}  \oplus  \cdots   \oplus    {\sf A}_{\Delta_N}  . 
  \]

  $2^0$ Suppose additionally that $\gamma>1/2$. Then the  singular continuous spectrum of $H$ is empty and the eigenvalues of   $H$   in  the set ${\Bbb R}\setminus {\cal T}$ have
  finite multiplicities and can   accumulate only  to the  set ${\cal T}$. 
  \end{theorem}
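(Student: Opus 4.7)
The plan is to establish part $1^0$ by constructing wave operators $W_\pm(H,H_n)$ for each $n=1,\ldots,N$, proving their ranges are mutually orthogonal, and verifying asymptotic completeness so that the orthogonal sum of these ranges exhausts $\calH^{({\rm ac})}(H)$. The intertwining property together with the given diagonalization of each $H_n^{({\rm ac})}$ as ${\sf A}_{\Delta_n}$ will then yield the direct sum decomposition asserted in $1^0$. Part $2^0$ will come from a limiting absorption principle for $H$ obtained from the strengthened smoothness exponent $\gamma>1/2$.

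To show existence of $W_\pm(H,H_n)$ on $\calH^{({\rm ac})}(H_n)$, I would exploit the fact that $0\notin\Delta_n$ to invert $H_n$ on compact spectral subintervals and combine Assumptions~\ref{assu}(b)(c) into a factorized representation of $H-H_n$ of the form $B\,Q^*(\,\cdot\,)Q$ with a bounded inner factor. The strong $H_n$-smoothness of $Q$ (Assumption~\ref{assu}(a)) supplies the Kato bound $\int\|Qe^{-iH_nt}f\|^2\,dt\leq C\|f\|^2$ on the a.c.\ subspace, from which the wave operators are produced by a standard smooth-scattering argument. For orthogonality, I would write $\langle W_\pm(H,H_n)f,W_\pm(H,H_m)g\rangle=\lim_{t\to\pm\infty}\langle e^{iH_m t}e^{-iH_n t}f,g\rangle$, and after substituting $f=H_n\tilde f$, $g=H_m\tilde g$ (justified since $0$ lies outside $\Delta_n\cup\Delta_m$), invoke Assumption~\ref{assu}(c) to recast the matrix element as $\langle K_{n,m}Qe^{-iH_nt}\tilde f,Qe^{-iH_mt}\tilde g\rangle$. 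The weak convergence $Qe^{-iH_nt}\tilde f\rightharpoonup 0$, which follows from the a.c.\ character of $\tilde f$, combined with compactness of $K_{n,m}$, upgrades to strong convergence of $K_{n,m}Qe^{-iH_nt}\tilde f$, and Cauchy--Schwarz concludes the argument.

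The principal obstacle is asymptotic completeness: to prove $\bigoplus_n R(W_\pm(H,H_n))=\calH^{({\rm ac})}(H)$. My approach would be to bootstrap Assumption~\ref{assu} via the second resolvent identity to show that $Q$ is also strongly $H$-smooth on $\Delta\setminus\calT$, with the compactness of $\wt K$ in Assumption~\ref{assu}(b) playing the essential role of converting the relevant identity into a Fredholm equation for the boundary values $Q(H-\lambda\mp i0)^{-1}Q^*$. Once $H$-smoothness of $Q$ is available, the reverse wave operators $W_\pm(H_n,H)$ can be constructed on the corresponding spectral subspaces of $H$ by the same type of smoothness argument, and the multiplication theorem~\eqref{eq:mt} together with the orthogonality already proved yields the completeness identity $\sum_n W_\pm(H,H_n)W_\pm(H_n,H)=P^{({\rm ac})}(H)$. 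For part $2^0$, the H\"older bound with exponent $\gamma>1/2$ on these boundary values is precisely the strength needed for a Privalov--Kato-type argument ruling out singular continuous spectrum of $H$ in $\Delta\setminus\calT$ and forcing finite multiplicity of eigenvalues there, with accumulation permitted only at~$\calT$.
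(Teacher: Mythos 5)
First, a point of reference: the paper does not prove Theorem~\ref{main} at all --- it is imported verbatim from the authors' earlier work \cite{PY}, so there is no in-paper proof to compare against. Measured against the multichannel smooth scheme that \cite{PY} implements (limiting absorption principle via a Fredholm-type analysis of $Q(H-z)^{-1}Q^*$, strong $H$-smoothness of $Q$, Kato smooth perturbation theory for the wave operators, a Privalov-type argument for $2^0$), your proposal has the right skeleton, and your orthogonality argument --- inserting $f=H_n\tilde f$, $g=H_m\tilde g$, using $H_nH_m=Q^*K_{n,m}Q$, the weak convergence $Qe^{-iH_nt}\tilde f\rightharpoonup 0$ and compactness of $K_{n,m}$ --- is essentially complete and correct.

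There are, however, two genuine gaps. The first is the completeness step. The assertion that ``the multiplication theorem together with the orthogonality already proved yields $\sum_n W_\pm(H,H_n)W_\pm(H_n,H)=P^{(\rm ac)}(H)$'' does not hold: the multiplication theorem \eqref{eq:mt} concerns compositions along a chain of three operators and produces no completeness statement. Asymptotic completeness requires, in addition to the existence of the reverse wave operators, the spectral inclusion $\spec_{\rm ac}(H)\subset\ov{\Delta_1}\cup\cdots\cup\ov{\Delta_N}$ together with the identification of $\tfrac{d}{d\lambda}\langle E(\lambda;H)f,f\rangle$ on $\calH^{(\rm ac)}(H)$ with the sum of the channel contributions coming from the boundary values of $Q(H-\lambda\mp i0)^{-1}Q^*$; note that the intervals $\Delta_n$ may overlap, so at a fixed energy several channels contribute simultaneously and one must show that nothing else does. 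This stationary identity is the core of the theorem (it is exactly what makes $1^0$ true) and is absent from your sketch. The second gap is the bootstrap itself: there is no single unperturbed operator here, so ``the second resolvent identity'' is not available in the naive form. For a fixed $n$, the difference $H-H_n=\sum_{m\ne n}H_m+\wt H$ contains the operators $H_m$, which are \emph{not} assumed to factor through $Q$ on both sides --- only the products $H_nH_m$ and $\wt H$ are. Consequently the equation for $Q(H-z)^{-1}Q^*$ cannot be closed against a single $H_n$; one is forced into a Faddeev-type system of coupled equations (this is precisely the ``simplified three-particle'' structure the authors advertise), and the resolution of the Fredholm alternative away from the threshold set $\cal T$ and the eigenvalues is where the real work lies. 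Relatedly, your existence argument is presented before $H$-smoothness of $Q$ is available, whereas Kato's smooth-perturbation existence theorem needs smoothness on \emph{both} sides of the factorization; the logical order must be: limiting absorption principle first, then existence and completeness together.
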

  
  The following assertion is known as
 the limiting absorption principle. 

\begin{theorem}\label{LAP}  \cite{PY}
Under Assumption~$\ref{assu}$ with $\gamma>1/2$, the operator-valued function $  Q (H-z)^{-1} Q^* $ is H\"older continuous with any exponent $\gamma'<\gamma$  in $z$ if $\pm\Im z\geq 0$, $\Re z \in {\Bbb R}\setminus {\cal T}$ away from  eigenvalues of $H$. 
\end{theorem}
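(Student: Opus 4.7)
My plan is to carry out a multichannel version of the standard smooth-operator proof of the limiting absorption principle. The first ingredient I would establish is the LAP for each individual model operator: strong $H_n$-smoothness of $Q$ on $\Delta_n$, via the diagonalization ${\sf F}$ of Definition~\ref{strsm} and Privalov's theorem for Cauchy integrals with H\"older continuous densities, shows that $T_n(z) := Q(H_n-z)^{-1}Q^*$ extends to a H\"older continuous operator-valued function with any exponent $\gamma'<\gamma$ on the closed half-planes $\pm\Im z\geq 0$ with $\Re z \in \Delta_n\setminus \mathcal{T}_n$; outside $\overline{\Delta_n}$ the function $T_n(z)$ is analytic.

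Next I would derive a Fredholm-type equation for $T(z) := Q(H-z)^{-1}Q^*$ using the resolvent identity
\[
(H-z)^{-1} = (H_n-z)^{-1} - (H_n-z)^{-1}(H - H_n)(H-z)^{-1}
\]
with $H - H_n = \sum_{m\neq n} H_m + \wt{H}$. The crucial algebraic step exploits Assumptions~\ref{assu}(b,c,d): from $(H_n-z)^{-1}H_n = I + z(H_n-z)^{-1}$ one obtains, for each $m\neq n$,
\[
(H_n-z)^{-1}H_m \;=\; z^{-1}(H_n-z)^{-1}Q^* K_{n,m} Q - z^{-1}H_m,
\]
while $(H_n-z)^{-1}\wt{H} = (H_n-z)^{-1}Q^*\wt{K}Q$ directly. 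Sandwiching by $Q$ and $Q^*$ and pulling $Q$ across each $H_m$ via $QH_m = (QH_m Q^{-1})Q$ then yields
\[
\bigl[\mathcal{A}_n(z) + T_n(z)\mathcal{C}_n(z)\bigr]\,T(z) \;=\; T_n(z),
\]
where $\mathcal{C}_n(z) := z^{-1}\sum_{m\neq n}K_{n,m} + \wt{K}$ is compact and H\"older continuous in $z$, and $\mathcal{A}_n(z) := I - z^{-1}\sum_{m\neq n}(QH_m Q^{-1})$ is a bounded H\"older continuous perturbation of $I$.

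Granted the invertibility of $\mathcal{A}_n(z)$, the final step is to recast the equation as $[I + \mathcal{A}_n(z)^{-1}T_n(z)\mathcal{C}_n(z)]T(z) = \mathcal{A}_n(z)^{-1}T_n(z)$. The left-hand factor is $I$ plus a compact, H\"older continuous operator, so the analytic Fredholm theorem gives invertibility on $\Im z>0$ and extends it up to the boundary away from a closed discrete exceptional set. A nontrivial kernel on the boundary at some real $\lambda$ would, by tracing the algebraic construction backwards, produce a nonzero $u\in\mathcal{H}$ with $(H-\lambda)u=0$ and $Qu\neq 0$, which is ruled out by the hypothesis that $\lambda$ is not an eigenvalue of $H$. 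H\"older continuity of the inverse, and hence of $T(z)$, with exponent $\gamma'<\gamma$ then follows by composition of H\"older continuous factors.

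I expect the main obstacle to be the uniform invertibility of $\mathcal{A}_n(z)$ on the boundary. Since $\mathcal{A}_n(z)$ is only a bounded, not compact, perturbation of $I$, Fredholm theory does not apply to it directly, and its invertibility must be extracted from the multichannel structure. The natural strategy is to choose $n$ adapted to $z$ (for instance with $\Re z \in \Delta_n$), to exploit the identity $\sum_m H_m = H - \wt{H}$, and to use the \emph{spectral separation} of the singularities of the different $H_m$ encoded in Assumption~\ref{assu}(c,d). The requirement $\gamma>1/2$ is needed so that $\gamma'>1/2$ survives the Privalov loss; this is precisely the Kato-smoothness threshold that underlies both the LAP and the spectral results of Theorem~\ref{main}.
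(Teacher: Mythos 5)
First, a point of reference: this paper does not prove Theorem~\ref{LAP} at all --- it is imported verbatim from \cite{PY}, so your proposal has to be measured against the multichannel scheme developed there rather than against anything in the present text. Your overall architecture (LAP for each model operator via strong smoothness and a Privalov-type argument, then a Fredholm equation for $T(z)=Q(H-z)^{-1}Q^*$ obtained from the resolvent identity and Assumption~\ref{assu}(b,c,d), then the analytic Fredholm alternative with the exceptional set identified with eigenvalues of $H$) is the right one and is close in spirit to \cite{PY}.

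However, the obstacle you flag at the end is not a technicality but a genuine gap that your argument, as written, does not close. Fixing a single reference operator $H_n$ leaves the non-compact term $\mathcal{A}_n(z)=I-z^{-1}\sum_{m\neq n}QH_mQ^{-1}$, and there is no reason for this operator to be invertible, let alone uniformly so up to the boundary; Assumption~\ref{assu}(d) only gives boundedness of $QH_mQ^{-1}$, with no smallness. The resolution in \cite{PY} is structural: one \emph{sums the resolvent identities over all} $n$. Writing $(H-z)\sum_n(H_n-z)^{-1}=NI+\sum_n(H-H_n)(H_n-z)^{-1}$ and using your own identity $H_m(H_n-z)^{-1}=z^{-1}Q^*K_{m,n}Q(H_n-z)^{-1}-z^{-1}H_m$, the troublesome non-compact contributions aggregate to $-z^{-1}(N-1)\sum_m H_m=-z^{-1}(N-1)(H-\wt{H})$, and the $H$-part recombines with the factor $H-z$ into the harmless constant $z^{-1}(N-1)$. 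One arrives at an equation of the form
\begin{equation*}
T(z)\bigl[I+\mathcal{K}(z)\bigr]=\sum_{n}T_n(z)+z^{-1}(N-1)QQ^*,
\end{equation*}
in which $\mathcal{K}(z)$ is genuinely compact and H\"older continuous (built from $\wt{K}$, the $K_{m,n}$ and the $T_n(z)$), so the analytic Fredholm theorem applies directly with no auxiliary invertibility hypothesis. Your suggested remedies (choosing $n$ adapted to $\Re z$, invoking $\sum_m H_m=H-\wt H$ for a fixed $n$) gesture at this but do not produce it; without the summation the proof does not go through. A secondary imprecision: $\gamma>1/2$ is not about a ``Privalov loss'' (Privalov preserves any exponent $\gamma'<\gamma$); it is needed in the step identifying boundary null vectors of the Fredholm operator with genuine eigenvectors of $H$, where one must show that $(H_n-\lambda\mp i0)^{-1}Q^*\psi$ lies in $\calH$, and the relevant integral converges precisely when the H\"older exponent of $Z(\cdot;Q)\psi$ exceeds $1/2$.
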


The following assertion summarizes the
scattering theory for the set of  the operators $ H_{1}, \ldots,  H_{N} $
 and the operator $H$. 
 

    \begin{theorem}\label{ScTh} \cite{PY}
  Under Assumption~$\ref{assu}$ we have:
    
    $1^0$ For all $ n=1,\ldots, N$, the wave operators $ W_{\pm} (H,H_n)$ exist.

 $2^0$ These operators enjoy the intertwining property  
 \[
 H W_{\pm} (H,H_n)=W_{\pm} (H,H_n) H_{n}, \q n= 1,\ldots, N. 
 \]
 The wave operators are isometric and their ranges are orthogonal to each other, that is,
 \[
\Ran W_{\pm} (H,H_n)\bot \Ran W_{\pm} (H,H_m),\q n\neq m.
\]

 $3^0$ The  asymptotic completeness holds:
  \[
\Ran  W_{\pm}  (H,H_1 ) \oplus  \cdots \oplus\Ran  W_{\pm}  (H,H_N )=  {\cal H}^{\rm(ac)} (H).
\]
     \end{theorem}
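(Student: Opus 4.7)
The plan is to realize Theorem~\ref{ScTh} as an application of the abstract smooth (Kato--Friedrichs) perturbation theory, using Assumption~\ref{assu} together with Theorem~\ref{LAP} (limiting absorption) and Theorem~\ref{main} (diagonalization of $H^{({\rm ac})}$) as inputs.

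\textbf{Existence of $W_\pm(H,H_n)$ ($1^0$).} Fix $n$ and decompose
\[
H-H_n=\sum_{m\ne n}H_m+\widetilde H.
\]
For $f\in\calH^{({\rm ac})}(H_n)=E(\Delta_n)\calH$, where $H_n$ is boundedly invertible since $0\notin\Delta_n$, set $\tilde f:=H_n^{-1}f\in E(\Delta_n)\calH$ and use Assumption~\ref{assu}(b,c) to recast the perturbation as
\[
(H-H_n)e^{-iH_n u}f = Q^*\widetilde K Qe^{-iH_n u}f+\sum_{m\ne n}Q^* K_{m,n}Qe^{-iH_n u}\tilde f.
\]
Testing against $e^{-iHu}g$ and integrating $u\in[s,t]$, the Cauchy criterion for $e^{iHt}e^{-iH_n t}f$ reduces, via Cauchy--Schwarz, to a product of two $L^2$-in-$u$ quantities: the ``$H_n$-side'' factor $\|Qe^{-iH_n u}\tilde f\|_{L^2(s,t)}$ (controlled by the Kato smoothness estimate implied by strong $H_n$-smoothness of $Q$, Assumption~\ref{assu}(a)) and the ``$H$-side'' factor $\|Qe^{-iHu}g\|_{L^2(s,t)}$ (controlled by $H$-smoothness of $Q$, which follows from Theorem~\ref{LAP} and the standard identification of smoothness with continuity of $Q(H-\lambda\pm i0)^{-1}Q^*$). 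Both tend to $0$ as $s,t\to\pm\infty$, so the Cauchy criterion is satisfied and $W_\pm(H,H_n)$ exists.

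\textbf{Intertwining, isometry, and orthogonality of ranges ($2^0$).} The first two are automatic consequences of existence. For orthogonality, take $n\ne m$, $f=H_n f'\in E(\Delta_n)\calH$, $g=H_m g'\in E(\Delta_m)\calH$. By the intertwining property, the self-adjointness of $H_n$, and Assumption~\ref{assu}(c),
\[
\bigl(W_\pm(H,H_n)f,\,W_\pm(H,H_m)g\bigr)=\lim_{t\to\pm\infty}\bigl(Qe^{-iH_n t}f',\,K_{n,m}Qe^{-iH_m t}g'\bigr).
\]
Strong $H_m$-smoothness of $Q$, combined with the Riemann--Lebesgue lemma applied in the spectral representation ${\sf F}_m$, yields $Qe^{-iH_m t}g'\to 0$ weakly; compactness of $K_{n,m}$ upgrades this to norm convergence; uniform boundedness $\|Qe^{-iH_n t}f'\|\le\|Q\|\|f'\|$ then forces the inner product to zero.

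\textbf{Asymptotic completeness ($3^0$): the main obstacle.} Interchanging the roles of $H_n$ and $H$ and reusing $H$-smoothness of $Q$ from Theorem~\ref{LAP}, essentially the same factorization argument yields existence of the inverse wave operators $W_\pm(H_n,H)$ on appropriate spectral subspaces of $\calH^{({\rm ac})}(H)$. The multiplication theorem \eqref{eq:mt} then gives $W_\pm(H_n,H)W_\pm(H,H_n)=P^{({\rm ac})}(H_n)$, so each $R(W_\pm(H,H_n))$ is a closed subspace on which $H$ is unitarily equivalent to $H_n^{({\rm ac})}$; combined with the orthogonality above, $\bigoplus_n R(W_\pm(H,H_n))$ is a closed subspace of $\calH^{({\rm ac})}(H)$ on which $H$ is unitarily equivalent to ${\sf A}_{\Delta_1}\oplus\cdots\oplus{\sf A}_{\Delta_N}$. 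But Theorem~\ref{main}($1^0$) asserts that $H^{({\rm ac})}$ is unitarily equivalent to precisely this direct sum, so the two subspaces must coincide, giving $3^0$. The principal technical difficulty throughout is establishing $H$-smoothness of $Q$ uniformly across the threshold set $\calT$ and at the points where individual $H_n$ fail to be invertible; away from these exceptional points the factorizations and smoothness estimates apply directly.
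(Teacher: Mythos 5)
First, note that the paper itself contains no proof of Theorem~\ref{ScTh}: it is quoted from \cite{PY}, so your proposal can only be measured against the strategy of that reference. Your skeleton --- Cook's method with the perturbation factored through $Q$ via Assumption~\ref{assu}(b,c), the Cauchy--Schwarz splitting into an ``$H_n$-side'' and an ``$H$-side'' smoothness estimate, and the orthogonality argument in $2^0$ via compactness of $K_{n,m}$ and weak convergence of $Qe^{-iH_mt}g'$ --- is indeed the smooth-method route of \cite{PY}, and part $2^0$ as you present it is essentially complete.

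There are, however, two genuine gaps. The first concerns the ``$H$-side'' factor: you obtain $H$-smoothness of $Q$ from Theorem~\ref{LAP}, but that theorem requires $\gamma>1/2$, whereas Theorem~\ref{ScTh} is asserted for every $\gamma\in(0,1]$ --- and this matters, since in the paper's main application (subs.~6.3) one has $\gamma<\beta-1/2$ with $\beta<\beta_0/2$, so for $1<\beta_0\le 2$ only exponents $\gamma\le 1/2$ are available and Theorem~\ref{thm.g1} would not follow from your version. The real content of the proof is to derive, from Assumption~\ref{assu} alone, a uniform bound on $Q(H-z)^{-1}Q^*$ for $\Re z$ in compact subsets of $\Delta_1\cup\cdots\cup\Delta_N$ minus a closed set of measure zero (Kato smoothness needs only boundedness of the sandwiched resolvent, not H\"older continuity of its boundary values); this is done by a Fredholm/compactness analysis of the resolvent identity relating $Q(H-z)^{-1}Q^*$ to the known quantities $Q(H_n-z)^{-1}Q^*$, and it is precisely the step your proposal leaves unaddressed. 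The second gap is in $3^0$: deducing completeness from Theorem~\ref{main}($1^0$) inverts the logic of \cite{PY}, where the unitary equivalence of $H^{\rm(ac)}$ with ${\sf A}_{\Delta_1}\oplus\cdots\oplus{\sf A}_{\Delta_N}$ is itself a corollary of asymptotic completeness, so within a reconstruction of the proof the argument is circular. Even taking Theorem~\ref{main}($1^0$) as an independent input, the inference ``a closed invariant subspace of $\calH^{\rm(ac)}(H)$ on which $H$ has the same unitary type as $H^{\rm(ac)}$ must be all of $\calH^{\rm(ac)}(H)$'' requires the spectral multiplicity to be finite a.e., which Definition~\ref{strsm} does not guarantee since $\dim\calN$ may be infinite. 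A non-circular proof must show directly that no part of $\calH^{\rm(ac)}(H)$ escapes all channels, e.g.\ by establishing $\sum_n\|W_\pm(H_n,H)g\|^2=\|g\|^2$ for $g$ in a dense subset; the mere existence of the reverse wave operators, which is all your argument provides, does not exclude that some vector is annihilated by all of them.
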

     
   
  

\section{Hankel   operators}\label{sec.e}


Here we collect standard information on various representations (see  the diagrams below)  of
 Hankel operators $H$. Observe that  Hankel operators are always defined     by the same formula
\begin{equation}
H =P_{+}\Omega J P_{+}^*,
\label{eq:HA}
\end{equation}
but the definitions of the operators $P_{+}$, $\Omega$ and $ J$ depend on the representation. We will consider four representations and state our results in all of them. It is convenient to  keep in mind all representations because some results obvious in one of them are difficult to see in others.


\medskip

{\bf 3.1.}
Let us begin with the representation of Hankel operators in the  Hardy space 
${\Bbb H}^2_{+} ({\Bbb T})\subset L^2 ({\Bbb T})$ 
of functions analytic in the unit disc $\mathbb D$ 
(for the precise definitions of Hardy classes, see, e.g., the book \cite{Hof}). 
The norm in the space $L^2 ({\Bbb T})$ is defined in a standard way by
\[
\| f\|_{L^2 ({\Bbb T})}^2 =  \int_{\Bbb T} |f(\mu)|^2    dm(\mu),\q dm(\mu)=(2\pi i \mu)^{-1}d\mu.
\]
Note that $dm(\mu)$ is 
the Lebesgue measure on $\bbT$ normalized so that $m(\bbT)=1$.
In formula \eqref{eq:HA},
 $P_{+}: L^2 ({\Bbb T}) \to {\Bbb H}^2_{+} ({\Bbb T})$ is the orthogonal projection, 
$J=J^*: L^2 ({\Bbb T}) \to L^2 ({\Bbb T})$ is the involution,
\[
(J f)(\mu)=\bar{\mu} f(\bar{\mu}),
\quad \mu\in\mathbb T.
\]
Obviously, $J$ maps ${\Bbb H}^2_{+} ({\Bbb T})$  onto ${\Bbb H}^2_{-} ({\Bbb T})$  
where ${\Bbb H}^2_{-} ({\Bbb T})={\Bbb H}^2_{+} ({\Bbb T})^\bot$ is the space of functions 
analytic in $ \C\setminus \overline{\mathbb D}$  and decaying at infinity. The operator $P_{-}=J P_{+} J$ is the orthogonal projection of $L^2 ({\Bbb T})$ onto $ {\Bbb H}^2_- ({\Bbb T})$.

The operator of multiplication 
$\Omega : L^2 ({\Bbb T}) \to L^2 ({\Bbb T})$ 
is defined by the formula
\begin{equation}
(\Omega f)(\mu)=\mu \omega(\mu) f( \mu ),
\quad \mu\in\mathbb T.
\label{eq:HA1}\end{equation}
The function $\omega(\mu)$ is always assumed to be bounded. 
Thus operator \eqref{eq:HA} is determined by the  function $\omega(\mu)$, 
that is, $H=H (\omega)$. 
The function $\omega(\mu)$ is known as the symbol of the Hankel operator
 $ H (\omega)$. 
 Of course the symbol is not unique because $ H (\omega_{1})
 = H (\omega_2)$ if (and only if) $\omega_{1}-\omega_{2}\in {\Bbb H}^\infty_{-} ({\Bbb T})$.


We set ${\ell}^2={\ell}^2(\bbZ)$ and ${\ell}^2_+={\ell}^2(\bbZ_{+})$ where $\bbZ_{+}= \{0,1,2,\dots\}$. 
The unitary mapping ${\cal F}: L^2 ({\Bbb T})\to {\ell}^2$ corresponds 
to expanding  a function in the Fourier series:
\begin{equation}
\hat{f}_{n}=({\cal F}f)_{n} =  \int_{\Bbb T} f(\mu)    \mu^{-n } d m(\mu)
\label{eq:CF}\end{equation}
so that for a sequence $\hat{f}=\{\hat{f}_{n}\}$, $n\in \bbZ$, 
\begin{equation}
f(\mu) = ({\cal F}^* \hat{f}) (\mu) = \sum_{n=-\infty}^\infty \hat{f}_{n}\mu^n.
\label{eq:CF1}\end{equation}
Then $\widehat{P}_{+}={\cal F} P _{+}{\cal F}^*: {\ell}^2\to {\ell}^2_{+}$ is the orthogonal projection onto the subspace ${\ell}^2_{+}$.
The operators $\widehat{J} ={\cal F}J{\cal F}^*: {\ell}^2\to {\ell}^2 $ and
$\widehat{\Omega} ={\cal F} \Omega  {\cal F}^*: {\ell}^2\to {\ell}^2 $ act by  the formulas
\[
(\widehat{J} \hat{f})_{n}= \hat{f}_{-n-1}
\]
and
\[
(\widehat{\Omega} \hat{f})_{n}= \sum_{m=-\infty}^\infty \hat{\omega}_{n-m-1}\hat{f}_{m}
\]
where $\hat{\omega}_{n } $ are the Fourier coefficients of the function $\omega (\mu)$.
According to \eqref{eq:HA},  
this leads to the standard definition of the Hankel operator 
\[
\widehat{H} ={\cal F} H {\cal F}^* = \widehat{P}_{+}\widehat{\Omega}\widehat{J}\widehat{P}_{+} ^*: {\ell}^2_{+}\to {\ell}^2_{+} 
\]
 by the formula
\[
(\widehat{H} \hat{f})_{n}= \sum_{m=0}^\infty \hat{\omega}_{n+m}\hat{f}_{m}.
\]

\medskip

{\bf 3.2.}
Recall that the mapping
\[
\mu=\frac{\nu -i/2}{\nu+i/2}
\]
of ${\Bbb R}$ onto ${\Bbb T}$ can be extended to the conformal mapping from the upper half-plane onto the unit disc. The unitary operator  ${\cal U}: L^2 ({\Bbb T})\to  L^2 ({\Bbb R} )$ corresponding to this mapping is defined by the equality
\begin{equation}
({\cal U} f)(\nu)= (2 \pi)^{-1/2} (\nu+i/2)^{-1} f \bigl(\tfrac{\nu-i/2}{\nu+i/2}\bigr).
\label{eq:un}\end{equation}
Since
\[
\nu=\frac{i }{2}\frac{1+\mu}{1-\mu},
\]
we have
\begin{equation}
({\cal U}^* {\bf f})(\mu)=  i (2 \pi)^{1/2}
 (1-\mu)^{-1} {\bf f} \bigl(\tfrac{i}{2} \tfrac{1+\mu}{1-\mu}\bigr).
\label{eq:un1}\end{equation}
Observe that  ${\cal U}: {\Bbb H}_{\pm}^2 ({\Bbb T})\to {\Bbb H}_{\pm}^2 ({\Bbb R} )$   and that ${\bf P}_{\pm} = {\cal U}P_{\pm} {\cal U}^*$ are the orthogonal projections onto the Hardy classes ${\Bbb H}_{\pm}^2 ({\Bbb R}  )$.  Set ${\bf J}= - {\cal U} J {\cal U}^*$, ${\pmb \Omega}= - {\cal U} \Omega {\cal U}^*$. Then
$({\bf J}{\bf f}) (\nu)=  {\bf f} (-\nu)$
and
\[
({\pmb \Omega}{\bf f}) (\nu)=\psi(\nu) {\bf f}  (\nu) 
\]
where
\begin{equation}
\psi(\nu)= - \tfrac{\nu-i/2}{\nu+i/2}\: \omega\big(\tfrac{\nu-i/2}{\nu+i/2}\big).
\label{eq:pwu}
\end{equation}
As  always,
\begin{equation}
{\bf H}= {\bf H}(\psi)= {\cal U} H (\omega) {\cal U}^* = {\bf P}_+ {\pmb \Omega}{\bf J}{\bf P}_{+}^*.
\label{eq:pww}
\end{equation}

The last, fourth, representation is obtained by applying the Fourier transform $\Phi$: 
\[
\hat{\bf f} (t)=(\Phi {\bf f})(t)= (2\pi)^{-1/2} \int_{-\infty}^\infty {\bf f}(\nu) e^{-i \nu t} d\nu.
\]
Then $\widehat{\bf P}_{\pm}= \Phi {\bf P}_{\pm}\Phi^*$ acts as the multiplication 
by the characteristic function $\mathbbm{1}_{\pm}  $ of the half-axis ${\Bbb R}_{\pm}$, that is,
\[
(\widehat{\bf P}_{\pm}\hat{\bf f} )(t)= \mathbbm{1}_{\pm}(t) \hat{\bf f} (t).
\]
In this representation,
\[
(\widehat{\bf J} \hat {\bf f}) (t)=  (\Phi {\bf J} \Phi^*\hat {\bf f}) (t)= \hat {\bf f} (-t)
\]
and $\widehat{\pmb \Omega}=\Phi^* {\pmb \Omega} \Phi$ is the convolution:
\begin{equation}
(\widehat{\pmb \Omega} \hat {\bf f} )(t)= (2\pi)^{-1/2}\int_{-\infty}^\infty  \hat { \psi} (t -s )\hat {\bf f}  (s )ds.
\label{e3}
\end{equation}
Then the Hankel operator
\begin{equation}
\widehat{\bf H} =\Phi{\bf H} \Phi^*= \widehat{\bf P}_{+}\widehat{\bf \Omega} \widehat{\bf J}\widehat{\bf P}_{+}^*
\end{equation}
 acts in the space $L^2 ({\Bbb R}_{+})$ by the standard formula
\begin{equation}
(\widehat{\bf H} \hat{\bf f})(t)= (2\pi)^{-1/2} \int_{0}^\infty \hat{\psi}(t+ s)\hat{\bf f}(s)ds.
\label{eq:K}
\end{equation}
In general, for $\psi \in L^\infty ({\Bbb R})$, formulas \eqref{e3} and \eqref{eq:K} should of course be understood in the sense of distributions. The function $\psi$ is known as the symbol of the Hankel operator $\widehat{\bf H} $.

\medskip

{\bf 3.3.}
Finally, we note that the representations in the spaces ${\ell}^2_{+}$ and $L^2({\Bbb R}_{+})$ are   connected by the operator ${\cal L}=\Phi  {\cal U} {\cal F}^* $.  It can  be  directly expressed 
in terms of the Laguerre functions, but we do not need the corresponding formulas in this paper.     

  The relations between different representations can be summarized by the following diagrams:  
 \[
   \begin{CD}
   L^2 ({\Bbb T})    @> {\cal U}>>  L^2 ({\Bbb R}; d\nu)
   \\
    @VV {\cal F}  V     @   VV \Phi V     
   \\
    {\ell}^2   @> {\cal L}>>  L^2 ({\Bbb R}; dt)
        \end{CD}
        \quad\q \q
        \begin{CD}
   {\Bbb H}^2_{+} ({\Bbb T})    @> {\cal U}>>   {\Bbb H}^2_{+} ({\Bbb R})
   \\
    @VV {\cal F}  V     @   VV \Phi V     
   \\
     {\ell}^2_{+}   @> {\cal L}>>  L^2 ({\Bbb R_{+}})
    \end{CD} 
    \] 
     $$ 
   \begin{CD}
   f(\mu)   @>>> {\bf f}(\nu)= ({\cal U}f)(\nu)
   \\
    @VV    V     @   VV   V     
   \\
    \hat{f}_{n} = ({\cal F} f)_{n}  @>>>   \hat{\bf f} (t)=(\Phi {\bf f}) (t) 
    \end{CD}
   $$
   and
   $$
    \begin{CD}
 H   @>>> {\bf  H } = {\cal U}  H {\cal U}^*
   \\
    @VV    V     @   VV   V     
   \\
    \widehat{H}  ={\cal F} H {\cal F}^* @>>>   \widehat{\bf H}  =\Phi  {\bf  H }\Phi^*
    \end{CD}
   \q\q \q
   \begin{CD}
   \omega (\mu)   @>>> \psi(\nu) 
   \\
    @VV    V     @   VV   V     
   \\
    \hat{\omega}_{n}    @>>>   \hat{\psi} (t)      
    \end{CD}
    $$

    A Hankel operator $H$ is self-adjoint in $ {\Bbb H}^2_{+} ({\Bbb T})  $ if $ J \Omega^*=   \Omega J$, i.e., 
      \begin{equation}
    \omega(\bar{\mu} )=\overline{\omega(\mu)}.
    \label{eq:HA2}\end{equation}
    This equality transforms into relations $\overline{\hat{\omega}_{n}}=\hat{\omega}_{n}$, $ \psi(-\nu)=\overline{\psi(\nu)}$ and 
    $\overline{\hat{ \psi} (t )}=\hat{ \psi}(t )$ in the spaces ${\ell}^2_{+}$,   ${\Bbb H}_{+}^2 ({\Bbb R})$ and $L^2 ({\Bbb R}_{+})$, respectively.

All these definitions   can be naturally extended to operators \eqref{eq:HA}
acting on functions taking values in an auxiliary Hilbert space $\cal N$. For example, formula \eqref{eq:HA} remains meaningful for an operator $H :   {\Bbb H}^2_{+}({\Bbb T})\otimes \cal N\to   {\Bbb H}^2_{+}({\Bbb T})\otimes \cal N$ if the operator $\Omega$ is defined by equality \eqref{eq:HA1} where $\omega(\mu): \cal N\to \cal N$  is an operator-valued function.

\medskip

{\bf 3.4.}
We systematically use the following elementary trick. Instead of the operator \eqref{eq:HA} in the Hardy space ${\Bbb H}_{+}^2 ({\Bbb T})$ we consider the operator $P_{+} \Omega J P_{+}$ acting in the  space $L^2 ({\Bbb T})$. This operator has the same spectrum as the operator \eqref{eq:HA} except for the additional zero eigenvalue of infinite multiplicity. We usually use the same notation $H$ for both of these operators. In particular, this trick allows us to use freely the results of \cite{PY1}. 
All  results are stated for Hankel operators in ${\Bbb H}_{+}^2 ({\Bbb T})$ while all proofs are given in the  space $L^2 ({\Bbb T})$.

Let us now come back to the results about the modulus of $|H|$ of Hankel operators $H= P_{+} \Omega J P_{+}$ stated in subs.~1.1.
We proceed from the following result of \cite{PY1}.

 \begin{theorem}\label{sym}
 Suppose that a function $\omega:\bbT\to\C$   is  continuous 
apart from some jump discontinuities at finitely many points    $a_j \in {\Bbb T} $ 
with jumps \eqref{eq:XX1}. At every point of discontinuity $a_{j} \in \bbT$,   assume condition \eqref{eq:XX3}. Then 
the  a.c. spectrum  of the operator 
 \begin{equation}
H_{\rm sym} =P_{+}\Omega P_{-} + P_{-} {\Omega}^*  P_{+} 
\label{eq:XX1xsym}\end{equation}
acting in the  space $L^2 ({\Bbb T})$ is given by the relation
 \[
\spec_{\rm ac} (H_{\rm sym})=   \bigcup_{ j}\,  [-2^{-1} |\varkappa (a_{j}) | , 2^{-1} |\varkappa (a_{j}) | ].
\]
Furthermore, the singular continuous spectrum of $H_{\rm sym}$ is empty and its eigenvalues different from $0$ and the points $ \pm 2^{-1} |\varkappa (a_{j})|$ have finite multiplicities and may accumulate only to these points. 
 \end{theorem}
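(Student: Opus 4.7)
The plan is to reduce the spectral analysis of $H_{\rm sym}$ to that of the modulus $|H|$ of the standard non-self-adjoint Hankel operator $H=P_{+}\Omega JP_{+}^{*}$ from subsection~3.1, and then to invoke the results of \cite{PY1} recalled in subsection~1.1. The starting point is the off-diagonal block structure of $H_{\rm sym}$ with respect to the decomposition $L^{2}({\Bbb T})={\Bbb H}_{+}^{2}({\Bbb T})\oplus {\Bbb H}_{-}^{2}({\Bbb T})$. Introducing $T=P_{+}\Omega P_{-}:{\Bbb H}_{-}^{2}({\Bbb T})\to {\Bbb H}_{+}^{2}({\Bbb T})$, one immediately reads off
\[
H_{\rm sym}=\begin{pmatrix}0 & T \\ T^{*} & 0\end{pmatrix}.
\]
A standard argument based on the polar decomposition $T=V|T|$ then shows that the restriction of $H_{\rm sym}$ to $(\Ker H_{\rm sym})^{\perp}$ is unitarily equivalent to $|T|\oplus(-|T|)$. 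In particular $\spec(H_{\rm sym})$ is symmetric about $0$ and all non-trivial spectral data for $H_{\rm sym}$ are determined by those of $|T|$.

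The next step is to identify $|T|$ with $|H|$. Writing $U:=J|_{{\Bbb H}_{+}^{2}({\Bbb T})}:{\Bbb H}_{+}^{2}({\Bbb T})\to {\Bbb H}_{-}^{2}({\Bbb T})$ for the unitary obtained by restricting $J$, the definitions of $H$ and $T$ give $Hf=P_{+}\Omega Jf=T(Uf)$ for every $f\in {\Bbb H}_{+}^{2}({\Bbb T})$, so $T=HU^{*}$. Consequently $T^{*}T=U|H|^{2}U^{*}$, and $|T|$ is unitarily equivalent to $|H|$. Combining the two reductions, one obtains that $H_{\rm sym}$ restricted to $(\Ker H_{\rm sym})^{\perp}$ is unitarily equivalent to $|H|\oplus(-|H|)$ acting on ${\Bbb H}_{+}^{2}({\Bbb T})\oplus {\Bbb H}_{+}^{2}({\Bbb T})$; observe that this unitary equivalence is purely algebraic and does not use the hypothesis \eqref{eq:XX3} at all.

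With this reduction in hand, the three assertions of the theorem become direct consequences of the corresponding properties of $|H|$ quoted from \cite{PY1} in subsection~1.1: under \eqref{eq:XX3} one has $\spec_{\rm ac}(|H|)=\bigcup_{j}[0,|\varkappa(a_{j})|/2]$, the singular continuous spectrum of $|H|$ is empty, and the eigenvalues of $|H|$ distinct from $0$ and from the threshold values $|\varkappa(a_{j})|/2$ have finite multiplicities and may accumulate only at these points. Under the unitary equivalence with $|H|\oplus(-|H|)$ the positive a.c.\ bands are reflected to the negative axis, yielding $\spec_{\rm ac}(H_{\rm sym})=\bigcup_{j}[-|\varkappa(a_{j})|/2,|\varkappa(a_{j})|/2]$; the absence of singular continuous spectrum is preserved by direct sums; and the eigenvalue statement transforms into the claimed one with threshold set $\{0\}\cup\{\pm |\varkappa(a_{j})|/2\}$. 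The one point requiring mild care is $\Ker H_{\rm sym}$, which in $L^{2}({\Bbb T})$ may well be infinite-dimensional but contributes only to the zero eigenvalue, which is already excluded from the eigenvalue statement and irrelevant for $\spec_{\rm ac}$ and $\spec_{\rm sc}$. The only genuinely substantive step in the argument is thus the reduction $|T|\cong|H|$; once this is in place, the theorem is essentially a corollary of the non-self-adjoint result of \cite{PY1}.
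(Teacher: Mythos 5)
Your block-matrix algebra is correct: $H_{\rm sym}$ is indeed off-diagonal with respect to $L^2({\Bbb T})={\Bbb H}^2_+({\Bbb T})\oplus{\Bbb H}^2_-({\Bbb T})$ with corner $T=P_+\Omega P_-$, the identity $T=HU^*$ with $U=J|_{{\Bbb H}^2_+({\Bbb T})}$ gives $|T|=U|H|U^*$, and the polar-decomposition argument correctly shows that $H_{\rm sym}$ restricted to $(\Ker H_{\rm sym})^\perp$ is unitarily equivalent to $|H|\oplus(-|H|)$ away from the kernel. This is in fact exactly the inverse of the computation the paper performs in subs.~3.4, where the identity $H_{\rm sym}^2=HH^*+JH^*HJ$ of \eqref{eq:SYM} is used to pass \emph{from} Theorem~\ref{sym} \emph{to} the statements about $|H|$.

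The problem is the direction of the deduction. The paper does not prove Theorem~\ref{sym}; it is imported verbatim from \cite{PY1}, where it is established by the analytic machinery for piecewise continuous functions of self-adjoint operators applied directly to the symmetrized operator \eqref{eq:XX1xsym}. The statements about $|H|$ recalled in subs.~1.1, which you take as your input, are justified in this paper only as consequences of Theorem~\ref{sym} (note the pointer ``see subs.~3.4'' in subs.~1.1 and the concluding sentence of subs.~3.4, ``Therefore the results about the operator $|H|$ stated in subs.~1.1 follow from the identity \eqref{eq:SYM}''). So your argument is circular within the logical economy of the paper: you derive the theorem from its own corollary. What you have actually proved --- correctly, and it is a worthwhile observation --- is that the $H_{\rm sym}$ statement and the $|H|$ statement are equivalent up to kernel bookkeeping; your own remark that condition \eqref{eq:XX3} is never used confirms that the argument contains no analytic content, and that all the real work (the limiting absorption principle and the model-operator analysis at the jumps of $\omega$, carried out in \cite{PY1}) is hidden in the input you assume. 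To repair the proof you would have either to reproduce the argument of \cite{PY1} for $H_{\rm sym}$ itself or to supply an independent proof of the spectral results for $|H|$.
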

 
It is easy to see that Theorem~\ref{sym} implies the results  about  the operator  $|H|$. Indeed, it follows from \eqref{eq:XX1xsym} that
\begin{multline}
H_{\rm sym}^2 =P_{+}\Omega P_{-} {\Omega}^* P_{+}  + P_{-} {\Omega}^*  P_{+} \Omega P_{-}
=P_{+}\Omega J P_{+} J {\Omega}^* P_{+}  + J P_{+} J  {\Omega}^*  P_{+} \Omega J P_{+} J 
\\  = H H^* + J H^* H J.
\label{eq:SYM}\end{multline}
According to Theorem~\ref{sym} the a.c. spectrum of the operator $H_{\rm sym}^2$ consists of the union of the intervals $[0 , 4^{-1} |\varkappa (a_{j}) |^2 ]$ (with every interval counted twice). The singular continuous spectrum of $H_{\rm sym}^2$ is empty and its eigenvalues different from $0$ and the points $ \pm 4^{-1} |\varkappa (a_{j})|^2$ have finite multiplicities and may accumulate only to these points. 
Let us further use that the non-zero parts of the operators $ H H^* $ and $ J H^* H J$ are unitarily equivalent and that they act in the orthogonal subspaces ${\Bbb H}_+^{2}({\Bbb T})$ and ${\Bbb H}_-^{2}({\Bbb T})$, respectively. Therefore  the results  about the operator  $|H|$  stated in subs.~1.1 follow from the identity \eqref{eq:SYM}.

     %
\section{Model operators for jumps at real points}\label{sec.fmo}

Here we construct   ``model'' operators $H_{+}$, $H_{-}$ corresponding to jumps of the symbol at the points $1$, $-1$.  The operators  $H_{\pm}$ will  be directly diagonalized (see Theorem~\ref{smD}) with the help of the results on the Mehler operator  discussed in subs.~4.1. Then we find (see Theorem~\ref{sm}) a class of operators smooth with respect to $H_{+}$ and $H_{-}$.


\medskip

{\bf 4.1.}
Following \cite{PY1}
as an ``elementary model'' operator, we choose the Mehler operator defined in the space $L^2 ({\Bbb R}_{+})$ by the formula 
\begin{equation}
(  {\cal M} u) (t)=   \pi^{-1}\int_{0}^\infty (2+ t+s)^{-1} u(s)ds.
\label{eq:Meh}
\end{equation}
The spectral decomposition of $ {\cal M}$ is well known and
is based on Mehler's formula:
\begin{equation}
\int_0^\infty \frac{P_{-\frac12+i\tau}(1+s)}{2+t+s} d s
=
\frac\pi{\cosh(\pi \tau)}
P_{-\frac12+i\tau}(1+t),
\quad
t, \tau\in\R_+,
\label{f1}
\end{equation}
where $P_{-\frac12+i\tau}(x)$ is the Legendre function. It can be defined     (see formulas (2.10.2) and (2.10.5) in the book \cite{BE}) for all $x>1$  in terms of the hypergeometric function
$F(a,b,c;z) $ and the gamma-function
  $\Gamma (\cdot)$ as
 \[
P_{-\frac12+i\tau}(x)
=
\Re \Big(\frac{\Gamma( i\tau)}{\sqrt{\pi}\Gamma(\tfrac12+ i\tau)}
2^{\tfrac12+i\tau}
F(\tfrac14-i\tfrac{\tau}{2},\tfrac34-i\tfrac{\tau}{2};1-i\tau; x^{-2})
x^{-\frac12 + i\tau}\Big).
\]
It follows that $P_{-\frac12+i\tau}(x)$ is a smooth function of $x > 1$ and it has the asymptotics
  \begin{equation}
P_{-\frac12+i\tau}(x)
=
\Re \big( \frac{\Gamma( i\tau)}{\sqrt{\pi}\Gamma(\tfrac12+ i\tau)}
2^{\tfrac12+i\tau} x^{-\frac12+ i\tau}\big)
+ O(x^{-5/2}), \q x\to\infty ,
\label{eq:f12}
\end{equation}
which is differentiable in $x$ and $\tau$. Moreover, the functions
$P_{-\frac12+i\tau}(x)$ and $P_{-\frac12+i\tau}'(x)$ are bounded as $x\to 1$.

  The Mehler-Fock transform 
$\Psi$ 
is  defined   by  the  formula
\begin{equation}
(\Psi f) (\tau)= \sqrt{\tau\tanh(\pi \tau)}
 \int_0^\infty  P_{-\frac12+i\tau}(t+1)f(t)  dt   
\label{f2}
\end{equation}
where $f\in C_0^\infty(\R_+)$ (see, e.g., \S 3.14 of  \cite{BE}).
Then   formula \eqref{f1} can be written as 
\begin{equation}
(\Psi  {\cal M} f)(\tau)
=
\frac1{\cosh(\pi \tau)} (\Psi f)(\tau), 
\quad \tau>0.
\label{f3}
\end{equation}
A detailed proof of the following assertion can be found in \cite{Yaf1}.

\begin{lemma}\label{lma.f4}
Let  the    operator $\Psi$ be defined
by formula \eqref{f2}. Then       $\Psi  $ is a unitary operator in $L^2({\Bbb R}_{+})$  and formula \eqref{f3} holds. In particular, the Mehler operator $\cal  M$ has the purely a.c.
spectrum $[0,1]$ of multiplicity one. 
\end{lemma}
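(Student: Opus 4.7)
The plan is to establish, in order, the intertwining identity \eqref{f3}, the unitarity of $\Psi$, and then read off the spectral structure of $\cal M$ as a corollary.

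The identity \eqref{f3} follows by a direct Fubini computation. For $f\in C_0^\infty(\R_+)$, insert \eqref{eq:Meh} into \eqref{f2}; the interchange of integrals is justified by the boundedness of $P_{-1/2+i\tau}(x)$ near $x=1$ and its $O(x^{-1/2})$ decay at infinity from \eqref{eq:f12}. The inner integral is then evaluated by Mehler's formula \eqref{f1}, producing the factor $\pi/\cosh(\pi\tau)$ and yielding \eqref{f3} at once.

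For the unitarity of $\Psi$ the natural approach is to reduce it, modulo a compact remainder, to a Fourier-type transform via the substitution $x = \ln(t+1)$. Using \eqref{eq:f12}, write
\[
P_{-\frac12+i\tau}(t+1) = 2\Re\bigl(c(\tau)(t+1)^{-\frac12+i\tau}\bigr) + R(t,\tau),
\]
with $c(\tau) = \Gamma(i\tau)\,2^{1/2+i\tau}/(\sqrt{\pi}\,\Gamma(\tfrac12+i\tau))$ and $R(t,\tau) = O((t+1)^{-5/2})$ uniformly on compact $\tau$-sets. Under $t+1 = e^x$, the map $f \mapsto e^{x/2} f(e^x - 1)$ is an $L^2$-isometry from $L^2(\R_+, dt)$ onto $L^2(\R_+, dx)$, and the leading part of $\Psi$ becomes, after this isometric change of variable, a linear combination of cosine and sine Fourier transforms on $\R_+$. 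The weight $\sqrt{\tau\tanh(\pi\tau)}$ in \eqref{f2} is precisely the normalization that restores isometry, which can be verified via the gamma-function identities
\[
|\Gamma(i\tau)|^2 = \frac{\pi}{\tau\sinh(\pi\tau)}, \qquad |\Gamma(\tfrac12+i\tau)|^2 = \frac{\pi}{\cosh(\pi\tau)},
\]
which combine into $\tau\tanh(\pi\tau)|c(\tau)|^2 = 2/\pi$. The remainder $R$ produces a Hilbert--Schmidt operator by virtue of its $(t+1)^{-5/2}$ decay, so the Plancherel identity from the Fourier picture extends by continuity to give $\|\Psi f\| = \|f\|$ on all of $L^2(\R_+)$. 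Surjectivity is obtained by running the same asymptotic analysis for $\Psi^*$, or by explicitly inverting the leading Fourier piece.

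The main obstacle is the uniform-in-$\tau$ control of the remainder and, more delicately, the verification that the leading cosine-plus-sine combination actually exhausts $L^2(\R_+, d\tau)$ rather than mapping into a proper invariant subspace. Once $\Psi$ is unitary and \eqref{f3} holds, the spectral assertion is immediate: $\cal M$ is unitarily equivalent to multiplication by $g(\tau) = \cosh(\pi\tau)^{-1}$ on $L^2(\R_+, d\tau)$, and since $g$ is a smooth strictly decreasing bijection of $\R_+$ onto $(0,1]$ with nowhere vanishing derivative, a further substitution $\lambda = g(\tau)$ converts this into multiplication by $\lambda$ on $L^2((0,1], w(\lambda)\,d\lambda)$ with strictly positive density $w$, proving that $\spec(\cal M) = [0,1]$ is purely absolutely continuous of multiplicity one.
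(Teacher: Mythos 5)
The paper does not actually prove this lemma: it defers to \cite{Yaf1}, where the unitarity of the Mehler--Fock transform is established by a commutator method (equivalently, one can invoke the Weyl--Titchmarsh--Kodaira eigenfunction expansion for the Legendre operator $-\frac{d}{dx}\bigl((x^2-1)\frac{d}{dx}\bigr)$ on $(1,\infty)$, whose generalized eigenfunctions are the $P_{-\frac12+i\tau}$). Your derivation of \eqref{f3} from Mehler's formula \eqref{f1} via Fubini is fine, and so is the final passage from unitarity plus \eqref{f3} to the spectral statement. The gap is in the middle step, and it is a real one.

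The scheme ``leading term is a Fourier-type transform, remainder is Hilbert--Schmidt, hence Plancherel extends by continuity'' cannot yield exact isometry. If $\Psi=\Psi_0+K$ with $\Psi_0$ isometric and $K$ compact, then $\Psi^*\Psi=I+(\Psi_0^*K+K^*\Psi_0+K^*K)$ is only a compact perturbation of the identity; isometry is not stable under compact perturbations, and there is no dense subspace on which the remainder vanishes, so there is nothing to ``extend by continuity.'' Worse, the leading piece itself is not an isometry: writing $c(\tau)=|c(\tau)|e^{i\eta(\tau)}$ and substituting $t+1=e^x$, the main term sends $g$ to a multiple of $c_0(\tau)\hat g(-\tau)+\overline{c_0(\tau)}\,\hat g(\tau)$ with $c_0=c/|c|$, and its squared norm acquires the cross term $2\Re\int_0^\infty c_0(\tau)^2\,\hat g(-\tau)\overline{\hat g(\tau)}\,d\tau$, which does not vanish for general $g$ supported on $\R_+$ because of the nonconstant phase $\eta(\tau)$ of $\Gamma(i\tau)/\Gamma(\tfrac12+i\tau)$. (Note also that \eqref{eq:f12} has $\Re(\cdots)$, not $2\Re(\cdots)$, in the leading term.) The gamma-function identities you quote only confirm that $\sqrt{\tau\tanh(\pi\tau)}$ is the correct normalization of the spectral density; they do not prove the Parseval identity. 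Unitarity of $\Psi$ is an exact global statement and must come from an exact identity --- the eigenfunction expansion for the Legendre differential operator, or the commutator argument of \cite{Yaf1} --- not from the two-term asymptotics \eqref{eq:f12}.
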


 \begin{remark}\label{rma.f4}
Instead of the Mehler operator, for similar purposes,
  J.~S.~Howland  used    the Hankel operator 
 with   kernel ${\bf h}(t)=\pi^{-1}e^{-t}t^{-1}$ in  \cite{Howl0}. This operator was diagonalized  by W.~Magnus and M.~Rosenblum in \cite{Ma, Ro}. It also has the simple purely
   a.c.  spectrum coinciding with the interval $[0,1]$, but now the kernel is singular at the point $t=0$,  and the corresponding symbol $\psi (\nu)$ has a jump at infinity. 
 \end{remark}

\medskip
 
{\bf 4.2.}
The Mehler operator is of course a Hankel operator and,
as is well known, its  symbol   can be chosen as a smooth  function with one jump discontinuity. In order to define this symbol, consider the function   
\begin{equation}
\zeta (\nu)
=
\frac{1}{\pi}
\int_0^\infty \frac{\sin(\nu t)}{2+t}d t ,\q \nu\in{\Bbb R}.
\label{H4}\end{equation}
Obviously, this function is real and odd.  Since
\[
 \zeta  (\nu)
=
\frac{1}{\pi} \Im \Big(
e^{-2i\nu } \int_{\nu}^\infty \frac{e^{2 i x}}{x}d x \Big), \q \nu>0,
\]
the function $\zeta  \in C^\infty (\R\setminus\{0\})$ and  $\zeta (\nu)$ admits an asymptotic expansion in powers $\nu^{-2k-1}$, $k=0,1,\ldots $ as $\nu\to\infty$. Moreover, the limits  $\zeta  (\pm 0)$  exist,  $\zeta ( \pm 0)=\pm 1/2$ and $\zeta ' (\nu)=O(|\ln|\nu||)$ as $\nu\to 0$. Calculating the Fourier transform of   function \eqref{H4}, we find that  
\[
\hat{\zeta }(t)=   \frac{-i} { \sqrt{2\pi}} \frac{\sign t}{2+| t| } , \q t\in{\Bbb R}.
\]
Thus the symbol of the operator $\Phi^* {\cal  M} \Phi$ equals $2 i \zeta(\nu)$ and hence
\begin{equation}
{\cal M} = \Phi     {\bf H}( 2i \zeta)   \Phi^* .
\label{eq:f33}
\end{equation}

These  results can of course be transplanted onto the unit circle. Let us set
\begin{equation}
v (\mu)= - 2  i\mu^{-1}  \zeta \bigl(\tfrac{i}{2}\tfrac{1+\mu}{1-\mu}\bigr),
\label{eq:om}
\end{equation}
and let $H(v)$ be the Hankel operator on ${\Bbb H}^2_{+}(\Bbb T)$ with this symbol.
Note that $v \in C^\infty (\Bbb T\setminus\{-1\})$ and the limits 
$v (-1\mp i0)= \pm i $ exist so that   the jump of $v$ at the point $-1$ equals
\[
 v (-1- i0)-v (-1 + i0) = 2  i.
 \]
Comparing formulas \eqref{eq:pwu} and  \eqref{eq:om}, we see that the symbol of the operator $ {\cal U}   H(v)  {\cal U} ^*$ also equals $ 2 i \zeta (\nu)$.
Hence according to \eqref{eq:f33} we have
\begin{equation}
{\cal M} = \Phi {\cal U}   H(v) {\cal U} ^* \Phi^* .
\label{eq:f3}
\end{equation}
Putting together equalities \eqref{f3} and \eqref{eq:f3}, we arrive at the following assertion.

\begin{lemma}\label{lme}
Let  the symbol $v (\mu)$ be defined
by formulas \eqref{H4} and \eqref{eq:om}. Then 
\[
(F H(v) f)(\tau)=   \frac1{  \cosh(\pi \tau)} (F   f)(\tau) , \q \tau>0,
\]
where $f\in {\Bbb H}^2_{+}(\Bbb T)$ is arbitrary and 
\begin{equation}
F= \Psi \Phi {\cal U}: {\Bbb H}^2_{+}(\Bbb T)\to L^2 ({\Bbb R}_{+} )
\label{eq:Me1}\end{equation}
is the unitary operator.
\end{lemma}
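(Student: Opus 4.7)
The plan is to derive the lemma by directly composing the two ingredients already assembled in this section: the Mehler--Fock diagonalization \eqref{f3} of the Mehler operator $\mathcal{M}$ (Lemma~\ref{lma.f4}), and the unitary transplantation \eqref{eq:f3} that identifies $\mathcal{M}$ with $\Phi\mathcal{U} H(v)\mathcal{U}^*\Phi^*$. Since the latter identity has already been justified in the preceding paragraph by matching symbols (the symbol of $\mathcal{U} H(v)\mathcal{U}^*$, computed from \eqref{eq:pwu} and \eqref{eq:om}, equals $2i\zeta(\nu)$, which is also the symbol of $\Phi^*\mathcal{M}\Phi$ as recorded in \eqref{eq:f33}), the remaining work is essentially bookkeeping of unitary intertwinings.

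First I would verify that $F=\Psi\Phi\mathcal{U}$ is a unitary map from $\mathbb{H}^2_+(\mathbb{T})$ onto $L^2(\mathbb{R}_+)$. This factors into three restricted unitaries: the conformal change of variable $\mathcal{U}$ sends $\mathbb{H}^2_+(\mathbb{T})$ unitarily onto $\mathbb{H}^2_+(\mathbb{R})$ (as recorded in Section~3.2), the Fourier transform $\Phi$ sends $\mathbb{H}^2_+(\mathbb{R})$ unitarily onto $L^2(\mathbb{R}_+)$ by the Paley--Wiener theorem, and $\Psi$ is unitary on $L^2(\mathbb{R}_+)$ by Lemma~\ref{lma.f4}. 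Composition gives the desired unitarity.

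Second, rewriting \eqref{eq:f3} as the intertwining identity $\Phi\mathcal{U}\, H(v)=\mathcal{M}\,\Phi\mathcal{U}$ and applying $\Psi$ from the left yields $F H(v)=\Psi\mathcal{M}\,\Phi\mathcal{U}$. For any $f\in\mathbb{H}^2_+(\mathbb{T})$, applying \eqref{f3} with input $g=\Phi\mathcal{U} f\in L^2(\mathbb{R}_+)$ gives
\[
(FH(v)f)(\tau)=(\Psi\mathcal{M}\Phi\mathcal{U} f)(\tau)=\frac{1}{\cosh(\pi\tau)}(\Psi\Phi\mathcal{U} f)(\tau)=\frac{1}{\cosh(\pi\tau)}(Ff)(\tau),
\]
which is the claimed formula. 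There is no substantial obstacle, since the real content, the diagonalization of $\mathcal{M}$ and the symbol computation showing $\mathcal{U} H(v)\mathcal{U}^*=\Phi^*\mathcal{M}\Phi$, has already been carried out; the lemma is just the clean packaging of these two facts into a single unitary equivalence.
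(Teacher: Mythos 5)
Your proposal is correct and follows essentially the same route as the paper, which simply combines the Mehler--Fock diagonalization \eqref{f3} with the identity \eqref{eq:f3}; your extra remarks on the unitarity of $F=\Psi\Phi\mathcal{U}$ (via the Paley--Wiener theorem and Lemma~\ref{lma.f4}) only make explicit what the paper leaves implicit.
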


Thus  the operator $ H (v)$ reduces to the operator of multiplication by the   function $  ( \cosh(\pi \tau))^{-1}$
in the space $L^2 ({\Bbb R}_{+})$. Making additionally the change of variables $\lambda=  (\cosh(\pi \tau))^{-1}$, we can further reduce the operator $ H(v)$ to the operator of multiplication by the independent variable $\lambda$ in the space $L^2 (0, 1)$. However, diagonalization \eqref{eq:Me1} is quite convenient for our purposes.

Using the operator $H(v)$ whose symbol \eqref{eq:om} is singular at the point $\mu =-1$, it is easy to construct a model operator for the singularity at the point $\mu =1$. Let us set
  \begin{equation}
  v_{+}(\mu)=   v  (-\mu), \q v_{-}(\mu)=   v  ( \mu);
  \label{eq:pm}\end{equation}
 the functions  $v_{\pm}(\mu)$ have the jump $ 2 i$ at the points $\mu=\pm 1$. Note that  $H (v_{+}) =    {\sf R}  H ( v ) {\sf R}^*$ where ${\sf R}$
 is the reflection operator in ${\Bbb H}_+ ^2 ({\Bbb T})$ defined by the formula 
   \begin{equation}
   ({\sf R} f)(\mu)=f(-\mu).
   \label{eq:Re}\end{equation}
Set  
  \begin{equation}
  F_{+}=F {\sf R} \q, F_{-}=F .
   \label{eq:ReF}\end{equation}
 It follows from Lemma~\ref{lme} that the operator $F_{+} H(v_+) F_{+}^*$ acts in the space $ L^2 ({\Bbb R}_{+} )$ as multiplication by the function $ \big( \cosh(\pi \tau) \big)^{-1}$. In particular, we  obtain the following assertion.
 
  
\begin{theorem}\label{smD}
Let  the symbols $ v_{\pm}  $ be defined by equalities \eqref{H4},  \eqref{eq:om} and   \eqref{eq:pm}.    The operators $H(v_\pm)$ have the purely  a.c.    simple spectrum coinciding with the interval $[0,1]$. 
 \end{theorem}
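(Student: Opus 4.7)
The plan is to read Theorem~\ref{smD} off of Lemma~\ref{lme} via two short steps: a unitary equivalence that handles $v_+$, and a change of variables that converts multiplication by $(\cosh(\pi\tau))^{-1}$ into multiplication by the independent variable on $(0,1]$.

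\textbf{Step 1 (the $v_-$ case and the case $v_+$ via reflection).} For $v_-=v$ the operator $F_- = F$ already diagonalizes $H(v_-)$ by Lemma~\ref{lme}. For $v_+$ the identity $H(v_+)=\mathsf{R}\, H(v)\,\mathsf{R}^*$ claimed in the text is verified directly from \eqref{eq:HA}--\eqref{eq:HA1}: since $\mathsf{R}$ preserves ${\Bbb H}_+^2({\Bbb T})$ we have $\mathsf{R} P_+ \mathsf{R}^* = P_+$; from $(\mathsf{R}\Omega_v \mathsf{R}^* f)(\mu)=(-\mu)v(-\mu)f(\mu)=-\Omega_{v_+}f(\mu)$ and $(\mathsf{R} J\mathsf{R}^* f)(\mu)=-\bar\mu f(\bar\mu)=-(Jf)(\mu)$ the two signs cancel, yielding the claimed equivalence. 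Combined with Lemma~\ref{lme} and the definition $F_+=F\mathsf{R}$ of \eqref{eq:ReF}, this gives
\[
F_{\pm}\, H(v_\pm)\, F_\pm^* = M_{\varphi},\qquad \varphi(\tau)=\frac{1}{\cosh(\pi\tau)},
\]
where $M_\varphi$ denotes multiplication by $\varphi$ on $L^2({\Bbb R}_+)$, and $F_\pm: {\Bbb H}_+^2({\Bbb T})\to L^2({\Bbb R}_+)$ are unitary.

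\textbf{Step 2 (change of variable).} The function $\varphi$ is smooth and strictly decreasing on $[0,\infty)$, and maps $[0,\infty)$ bijectively onto $(0,1]$ with $\varphi(0)=1$ and $\varphi(\tau)\to 0$ as $\tau\to\infty$. Hence the substitution $\lambda=\varphi(\tau)$ induces a unitary operator $L^2({\Bbb R}_+)\to L^2((0,1))$ intertwining $M_\varphi$ with multiplication by $\lambda$ on $L^2((0,1))$. This last operator manifestly has purely absolutely continuous, simple spectrum equal to $[0,1]$, and so, by Step~1, so do $H(v_\pm)$.

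There is no serious obstacle: the only non-obvious point is the commutation relation $H(v_+)=\mathsf{R} H(v)\mathsf{R}^*$, which reduces to checking that the two minus signs produced by $\mathsf{R}\Omega_v\mathsf{R}^*=-\Omega_{v_+}$ and $\mathsf{R} J\mathsf{R}^*=-J$ cancel, as shown above. Everything else is a routine application of Lemma~\ref{lme} together with the elementary change-of-variable diagonalization of $M_\varphi$.
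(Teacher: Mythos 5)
Your proof is correct and follows essentially the same route as the paper: the paper likewise derives Theorem~\ref{smD} from Lemma~\ref{lme} via the identity $H(v_+)=\mathsf{R}\,H(v)\,\mathsf{R}^*$ and the change of variables $\lambda=(\cosh(\pi\tau))^{-1}$. The only difference is that you explicitly verify the sign cancellation in $\mathsf{R}\Omega_v\mathsf{R}^*=-\Omega_{v_+}$ and $\mathsf{R}J\mathsf{R}^*=-J$, which the paper asserts without computation; that verification is accurate.
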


\medskip

{\bf 4.3.}
Define the operators $F_{\pm}$
 by formulas \eqref{eq:Me1} and \eqref{eq:ReF}.
In this subsection we consider $H(v_\pm)$ and $F_\pm$ as the operators in $L^2 ({\Bbb T})$ extending them by zero onto ${\Bbb H}_+^2 ({\Bbb T})^{\bot}$. Let us  construct some operators that are smooth (see Definition~\ref{strsm} where now $ {\cal N}={\Bbb C}$) with respect to $H(v_\pm)$.

For $a\in {\Bbb T}$, we  introduce a function on ${\Bbb T}$ by the equations
\begin{equation}
 q_{a}  (\mu)= \big|\ln |\mu - a | \big|^{-1 }\q {\rm for} \q |\mu - a | \leq e^{-1}
\label{eq:Xs}
\end{equation}
and $ q_{a}  (\mu)=1$ for $ |\mu - a | \geq e^{-1}$. Note that  $q_{a} \in L^\infty ({\Bbb T})$ and $q_{a}  (\mu)$ vanishes (logarithmically) only at one point $a\in \Bbb T$. Let the operator $Q_{a} $  in $ L^2  (\Bbb T) $ be defined  by the formula
\begin{equation}
( Q_{a} f) (\mu)=q_{a} (\mu) f(\mu)  .
\label{eq:X}\end{equation}

Our goal now is   to check the following result.


\begin{theorem}\label{sm}
Let  the symbols $ v_{\pm}  $ be defined by equalities \eqref{H4},  \eqref{eq:om} and   \eqref{eq:pm}, and let the operators $Q_{\pm 1} $ be defined by formula \eqref{eq:X}.    Then   the operator  $Q_{\pm 1}^{\beta} $ for $\beta>1/2$  is strongly $H(v_\pm)$-smooth on the interval $(0,1)$ for the diagonalization $F_{\pm}$    with any exponent   $\gamma<\beta-1/2$.
 \end{theorem}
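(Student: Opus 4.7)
The strategy is to work with the generalized eigenfunctions of $H(v_\pm)$ coming from Lemma~\ref{lme} and to control their singular behaviour at $\mu = \mp 1$ using the Legendre asymptotics \eqref{eq:f12}. First I would reformulate the smoothness condition via the adjoint identity $(F_\pm Q_{\pm 1}^\beta f)(\tau) = \langle f,\, q_{\pm 1}^\beta \chi_\pm(\cdot;\tau)\rangle_{L^2(\bbT)}$, where $\chi_\pm(\cdot;\tau) := F_\pm^* \delta_\tau$ is the (distributional) generalized eigenfunction of $H(v_\pm)$ with eigenvalue $1/\cosh(\pi\tau)$. Since $\lambda = 1/\cosh(\pi\tau)$ is a $C^\infty$ diffeomorphism of $(0,\infty)$ onto $(0,1)$, the two bounds of Definition~\ref{strsm} reduce to showing uniform $L^2(\bbT)$-boundedness and Hölder continuity (with exponent $\gamma$ in $\tau$) of the $L^2(\bbT)$-valued function $\tau \mapsto q_{\pm 1}^\beta \chi_\pm(\cdot;\tau)$ on compact subintervals of $(0,\infty)$.

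For the $-$ case, Lemma~\ref{lme} gives $\chi_-(\cdot;\tau) = {\cal U}^* \Phi^* \phi_\tau$ with $\phi_\tau(t) = \sqrt{\tau\tanh(\pi\tau)}\, P_{-\frac12+i\tau}(t+1)$. Using \eqref{eq:f12} I would split $\phi_\tau$ into its two dominant oscillatory terms $A(\tau)\, t^{-1/2+i\tau} + \overline{A(\tau)}\, t^{-1/2-i\tau}$ plus an $O(t^{-5/2})$ remainder that remains differentiable in $\tau$. Computing the Fourier transforms via the identity $\int_0^\infty t^{-1/2\pm i\tau} e^{i\nu t}\,dt = \Gamma(1/2\mp i\tau)\, e^{i\pi(1/2\mp i\tau)/2}\, \nu^{-1/2\pm i\tau}$ and pulling the result back through \eqref{eq:un1}, I expect the expansion
\[
\chi_-(\mu;\tau) = D^+(\tau)(1+\mu)^{-1/2-i\tau} + D^-(\tau)(1+\mu)^{-1/2+i\tau} + \rho(\mu;\tau)
\]
near $\mu = -1$, with $D^\pm$ smooth in $\tau$ on $(0,\infty)$ and $\rho$ continuous in $(\mu,\tau)$ with bounded $\partial_\tau\rho$ in a neighbourhood of $\mu=-1$. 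This yields the pointwise bounds $|\chi_-(\mu;\tau)| \leq C |1+\mu|^{-1/2}$ and $|\partial_\tau \chi_-(\mu;\tau)| \leq C(|\ln|1+\mu||+1)\,|1+\mu|^{-1/2}$ uniformly for $\tau$ in compact subsets of $(0,\infty)$.

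The uniform $L^2$-boundedness of $q_{-1}^\beta \chi_-(\cdot;\tau)$ is immediate: near $\mu = -1$ the integrand $q_{-1}^{2\beta}|\chi_-|^2$ is bounded by $C|1+\mu|^{-1}|\ln|1+\mu||^{-2\beta}$, integrable precisely when $\beta>1/2$. For Hölder continuity with exponent $\gamma < \beta - 1/2$, I would split the integral for $\|q_{-1}^\beta(\chi_-(\cdot;\tau)-\chi_-(\cdot;\tau'))\|_{L^2}^2$ at $|\mu+1|=\delta$. On $\{|\mu+1|>\delta\}$ the Lipschitz estimate contributes $|\tau-\tau'|^2(|\ln\delta|+1)^2|\ln\delta|^{-2\beta+1}$; on $\{|\mu+1|\leq\delta\}$ the pointwise bound and triangle inequality give $C|\ln\delta|^{-2\beta+1}$. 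Choosing $|\ln\delta|\sim|\tau-\tau'|^{-1}$ balances both terms to $|\tau-\tau'|^{2\beta-1}$ up to logarithmic losses, giving any exponent $\gamma<\beta-1/2$. The $+$ case follows by the reflection symmetry $H(v_+) = {\sf R} H(v_-){\sf R}^*$, $F_+ = F_- {\sf R}$, $q_{+1}(\mu) = q_{-1}(-\mu)$, so $\chi_+(\mu;\tau)=\chi_-(-\mu;\tau)$ and the analysis transfers verbatim.

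The main obstacle lies in the expansion of $\chi_-(\mu;\tau)$ near $\mu=-1$: while the leading $|1+\mu|^{-1/2}$ singularity and its oscillatory phase $(1+\mu)^{\pm i\tau}$ are dictated directly by \eqref{eq:f12}, one must verify that the remainder $\rho$ is differentiable in $\tau$ with a derivative no worse than the stated logarithmic bound. This requires a careful Fourier-side analysis of the $O(t^{-5/2})$ tail in \eqref{eq:f12} together with the smooth $\tau$-dependence of $A(\tau)$. Once the pointwise bounds are in hand, the weighted $L^2$ split-and-optimize argument is routine.
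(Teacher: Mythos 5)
Your argument is correct and is essentially the paper's own proof in dual form: the paper writes $(FQ_{-1}^\beta f)(\tau)$ as an integral of ${\cal U}Q_{-1}^\beta f$ against the kernel $w_\tau(\nu)$ (the Fourier transform of the Legendre function, Lemma~\ref{XY}) and applies Cauchy--Schwarz with the logarithmic weight, the bounds $|w_\tau(\nu)|\le C|\nu|^{-1/2}$ and $|\partial_\tau w_\tau(\nu)|\le C|\nu|^{-1/2}\big|\ln|\nu|\big|$ near $\nu=0$ (Lemma~\ref{lma.f5}, imported from \cite{PY1}) playing exactly the role of your pointwise bounds on $\chi_-(\mu;\tau)$ near $\mu=-1$, with the same mechanisms producing $\beta>1/2$ and $\gamma<\beta-1/2$. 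The only differences are that the paper quotes these kernel estimates rather than re-deriving them from \eqref{eq:f12} (your acknowledged ``main obstacle''), and obtains the H\"older exponent by interpolating the two bounds on $w_{\tau'}-w_\tau$ rather than by your equivalent split-and-optimize in $\mu$.
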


Let us start with an informal interpretation of the result of Theorem~\ref{sm}.  By formula \eqref{f1} up to a normalization, eigenfunctions (of the continuous spectrum) of the operator $\cal M$ equal $\vartheta_{\tau}(t)=P_{-\frac12+i\tau}(1+t)$. In view of \eqref{eq:f12},  they do not belong to $L^2$ at infinity.  This implies   that the eigenfunctions $(\Phi^* \vartheta_{\tau})(\nu)$ of the operator $\Phi^*{\cal M}Ê\Phi$  do not belong to $L^2$ in a neighbourhood of  the point $\nu= 0$ and hence eigenfunctions    $ ({\cal U}^* \Phi^* \vartheta_{\tau})(\mu)$ of the operator $H (v)$  do not belong to $L^2$ in a neighbourhood of  the point $\mu= -1$. However Lemma~\ref{lma.f5} below shows that
the singularities  of the functions $ ({\cal U}^* \Phi^* \vartheta_{\tau})(\mu)$ at the point $\mu= -1$ are  rather weak.  This is an indication that an operator of multiplication by a bounded function  is $H (v)$-smooth provided this function logarithmically vanishes at the point $\mu=- 1$. We note that the
symbol  $v(\mu)$ has a jump at the point $\mu=- 1$.

 
The  formal proof  requires some   elementary information on the Fourier transform of the Legendre functions.

\begin{lemma}\label{lma.f5}
The integral 
\begin{equation}
 w_{\tau}(\nu)=  (2\pi)^{-1/2}  \int_1^\infty  
  P_{-\frac12+i\tau}(x )e^{ - i\nu x} dx , \q \tau>0,  
\label{eq:Xx1}\end{equation}
converges for all $\nu>0$, and it  is differentiable in $\tau$.
If  $\Delta\subset {\Bbb R}_{+}$ is a compact interval and $\tau\in\Delta$, 
then there exists $C=C (\Delta)$ such that
\begin{equation}
\abs{ w_{\tau}(\nu)}
\leq 
C  \abs{\nu}^{-1},
\quad
| \partial w_{\tau} (\nu)/ \partial \tau |
\leq
C  \abs{\nu}^{-1},
\quad 
\abs{\nu}\geq 1/2,
\label{f4}\end{equation}
and 
\begin{equation}
\abs{ w_{\tau}(\nu)  }
\leq 
C  \abs{\nu}^{-1/2},
\quad 
| \partial w_{\tau} (\nu)/ \partial \tau |
\leq
C  \abs{\nu}^{-1/2}\big|\ln |\nu | \big|,
\q
\abs{\nu}\leq 1/2, \quad \nu\not=0.
\label{f5}
\end{equation}
Moreover, the integral
\begin{equation}
   \int_1^ R 
  P_{-\frac12+i\tau}(x )e^{ - i\nu x} dx  
\label{eq:Xx1f}\end{equation}
is bounded by $C  \abs{\nu}^{-1}$ for $\abs{\nu}\geq 1/2$ and by $C  \abs{\nu}^{-1/2}$ for $\abs{\nu}\leq 1/2$ with a constant $C$ that does not depend on $R\leq\infty$.
\end{lemma}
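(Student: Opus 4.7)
The plan is to extract the leading oscillatory behaviour of $P_{-1/2+i\tau}(x)$ at infinity from \eqref{eq:f12} and then analyze the resulting Mellin-type oscillatory integrals. I would split
\[
P_{-\frac12+i\tau}(x) = c(\tau) x^{-1/2 + i\tau} + \overline{c(\tau)}\, x^{-1/2 - i\tau} + R_\tau(x),
\]
where $c(\tau)$ is read off from \eqref{eq:f12} and the remainder satisfies $R_\tau(x) = O(x^{-5/2})$ together with a similar bound (possibly with an additional factor of $\ln x$) for $\partial_\tau R_\tau$, uniformly for $\tau$ in a compact subset $\Delta \subset \R_+$. Both $R_\tau$ and $\partial_\tau R_\tau$ are then absolutely integrable on $[1,\infty)$, so the remainder contributes a term bounded by a constant uniformly in $\nu$; a single integration by parts, using the boundedness of $P_{-1/2+i\tau}$ near $x=1$, improves this to $O(|\nu|^{-1})$ when $|\nu|\geq 1/2$. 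Since $O(1)$ beats $|\nu|^{-1/2}$ near the origin, this handles the remainder in both \eqref{f4} and \eqref{f5}.

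The main contribution reduces to the integrals $I_\pm(\nu,\tau) = \int_1^\infty x^{-1/2 \pm i\tau} e^{-i\nu x}\, dx$. For $|\nu|\geq 1/2$, a single integration by parts yields a boundary term of size $|\nu|^{-1}$ plus an integral whose integrand is $O(x^{-3/2})$ and hence absolutely integrable, giving $|I_\pm|\leq C|\nu|^{-1}$. For $0<\nu\leq 1/2$, I would substitute $y=\nu x$ to get
\[
I_\pm(\nu,\tau) = \nu^{-1/2 \mp i\tau}\int_\nu^\infty y^{-1/2 \pm i\tau} e^{-iy}\, dy,
\]
where the inner integral is uniformly bounded for $\nu\in(0,1/2]$ and $\tau\in\Delta$ (it converges as $\nu\to 0+$ to a value expressible through an incomplete gamma function). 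This delivers $|I_\pm|\leq C|\nu|^{-1/2}$, matching the first parts of \eqref{f4} and \eqref{f5}.

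For the $\tau$-derivative, differentiating $x^{\pm i\tau}$ produces a factor $\pm i\ln x$. In the integration-by-parts regime $|\nu|\geq 1/2$ this logarithm is harmless (it multiplies absolutely integrable $x^{-3/2}\ln x$) and the bound $|\partial_\tau I_\pm|\leq C|\nu|^{-1}$ persists. In the rescaling regime $|\nu|\leq 1/2$ one has $\ln x = \ln y - \ln\nu$; the $\ln y$ piece still produces a uniformly bounded integral, while the $\ln\nu$ piece multiplies the previous estimate by $|\ln|\nu||$, matching \eqref{f5}. The contribution from $\partial_\tau c(\tau)$ is bounded on $\Delta$ and inherits the same bounds. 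The principal obstacle I expect is precisely this bookkeeping of the $\tau$-derivative in the small-$\nu$ regime, where one must carefully track both logarithmic factors without losing an extra power of $\nu$.

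Finally, all the estimates above depend only on absolute convergence of certain integrals (involving $R_\tau$, $\partial_\tau R_\tau$, and $x^{-3/2}$) together with uniform boundedness of $\int_\nu^\infty y^{-1/2\pm i\tau}e^{-iy}\,dy$ over the relevant range, none of which uses the upper limit being $+\infty$. Replacing $\int_1^\infty$ by $\int_1^R$ throughout leaves the bounds intact with constants independent of $R\leq\infty$, which gives the claim for \eqref{eq:Xx1f}.
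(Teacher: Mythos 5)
Your proof is essentially correct, but note that the paper itself contains no argument for this lemma: estimates \eqref{f4} and \eqref{f5} are simply quoted from Lemma~3.10 of \cite{PY1}, and the $R$-uniform bound for \eqref{eq:Xx1f} is asserted to follow ``in exactly the same way''. What you have written is therefore a self-contained substitute for that citation, and your method --- peel off the leading oscillatory terms $c(\tau)x^{-1/2\pm i\tau}$ from \eqref{eq:f12}, integrate by parts once for $\abs{\nu}\geq 1/2$, rescale $y=\nu x$ for $\abs{\nu}\leq 1/2$, and track the extra $\ln x=\ln y-\ln\nu$ coming from $\partial_\tau$ --- is the natural one and yields exactly the stated bounds. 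Three small points would make it airtight. First, the integration by parts you apply to the remainder $\int_1^\infty R_\tau(x)e^{-i\nu x}\,dx$ needs not just boundedness of $R_\tau$ at $x=1$ but integrability of $\partial_x R_\tau$ on $[1,\infty)$; this is available because the paper states that expansion \eqref{eq:f12} is differentiable in $x$ (so $\partial_x R_\tau=O(x^{-7/2})$) and that $P'_{-\frac12+i\tau}$ is bounded as $x\to1$, and you should invoke those facts explicitly. Second, you treat only $0<\nu\leq 1/2$, whereas \eqref{f5} is claimed for $\abs{\nu}\leq 1/2$, $\nu\neq0$; since $P_{-\frac12+i\tau}(x)$ is real for $x>1$ one has $w_\tau(-\nu)=\overline{w_\tau(\nu)}$, which disposes of negative $\nu$ in one line. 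Third, for the claim about \eqref{eq:Xx1f} you should say a word on why the partial integrals $\int_\nu^{\nu R}y^{-1/2\pm i\tau}e^{-iy}\,dy$ are bounded uniformly in $R$, $\nu\in(0,1/2]$ and $\tau\in\Delta$ (split at $y=1$ and integrate by parts on $[1,\nu R]$); convergence of the improper integral alone is not literally the same as a uniform bound on all truncations. With these remarks added, your argument fully proves the lemma, including the differentiability in $\tau$, which follows from the locally uniform convergence of the $\tau$-differentiated integrals.
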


 Estimates \eqref{f4} and \eqref{f5} are proven in  \cite{PY1}; see Lemma~3.10.  The assertion about the integral \eqref{eq:Xx1f} can be obtained in exactly the same way.

 Next, we derive a convenient representation for the operator $\Psi\Phi$. 
 According to our convention of subs.~3.4 we put $\Psi \hat{\bf f} =0$ for  $\hat{\bf f} \in L^2 ({\Bbb R}_{-})$ and consider $\Psi\Phi$ as a mapping of $L^2 ({\Bbb R})$ onto $L^2 ({\Bbb R}_{+})$.  Denote by ${\cal S}={\cal S} ({\Bbb R})$ the Schwartz space.

\begin{lemma}\label{XY}
Let ${\bf g}\in  {\cal S}  $.  Then 
\begin{equation}
(\Psi\Phi {\bf g})(\tau)=  \sqrt{\tau\tanh(\pi \tau)} \psi(\tau)
\label{eq:Fh}\end{equation}
  where 
\begin{equation}
\psi (\tau)= 
  \int_{-\infty}^\infty   {\bf g}(\nu)
w_{\tau}(  \nu) e^{ i \nu}    d\nu 
 \label{eq:Xx}\end{equation}
 and $w_{\tau}(  \nu)$ is function \eqref{eq:Xx1}.
 \end{lemma}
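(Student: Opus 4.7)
The plan is to unfold the definitions of $\Psi$ and $\Phi$ and interchange the order of integration so as to isolate the inner integral $w_{\tau}(\nu)$. The prefactor $\sqrt{\tau\tanh(\pi\tau)}$ comes directly from the definition \eqref{f2}, so everything reduces to computing
\[
I(\tau) := \int_0^\infty P_{-\frac12+i\tau}(t+1)\,(\Phi {\bf g})(t)\,dt
\]
and showing it equals $\int_{-\infty}^\infty {\bf g}(\nu)\,w_\tau(\nu)\,e^{i\nu}\,d\nu$. The main obstacle is that the double integral is \emph{not} absolutely convergent: by \eqref{eq:f12} we have $P_{-\frac12+i\tau}(x)=O(x^{-1/2})$ as $x\to\infty$, while $(\Phi{\bf g})(t)$ is only the (conditionally convergent) Fourier integral of ${\bf g}$. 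Thus Fubini cannot be applied directly, and one must truncate and pass to a limit.

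Since ${\bf g}\in{\cal S}$, the function $\Phi {\bf g}$ is Schwartz and decays rapidly, so $I(\tau)$ is absolutely convergent (using $|P_{-\frac12+i\tau}(t+1)|\le C(1+t)^{-1/2}$). After the substitution $x=t+1$ I would write
\[
I(\tau)=\lim_{R\to\infty}\int_1^R P_{-\frac12+i\tau}(x)\,(\Phi {\bf g})(x-1)\,dx .
\]
On the bounded strip $[1,R]\times\R$ the integrand $P_{-\frac12+i\tau}(x)\,{\bf g}(\nu)\,e^{-i\nu(x-1)}$ is absolutely integrable, so Fubini applies. Substituting the explicit Fourier integral of $\Phi {\bf g}$ and swapping the order of integration yields
\[
I(\tau)=\lim_{R\to\infty}\int_{-\infty}^\infty {\bf g}(\nu)\,e^{i\nu}\,w_\tau^R(\nu)\,d\nu ,
\qquad
w_\tau^R(\nu)=(2\pi)^{-1/2}\int_1^R P_{-\frac12+i\tau}(x)\,e^{-i\nu x}\,dx.
\]

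Finally I would justify passing the limit inside by dominated convergence. Pointwise for $\nu\neq 0$ one has $w_\tau^R(\nu)\to w_\tau(\nu)$, and the uniform bound on integral \eqref{eq:Xx1f} supplied in Lemma~\ref{lma.f5} gives $|w_\tau^R(\nu)|\le C|\nu|^{-1/2}$ for $|\nu|\le 1/2$ and $|w_\tau^R(\nu)|\le C|\nu|^{-1}$ for $|\nu|\ge 1/2$, with $C$ independent of $R$. Since ${\bf g}\in{\cal S}$, the product $|{\bf g}(\nu)\,w_\tau^R(\nu)|$ is dominated by an integrable majorant (integrable at $0$ thanks to the $|\nu|^{-1/2}$ bound and at infinity thanks to the rapid decay of ${\bf g}$). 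Dominated convergence therefore gives the desired identity \eqref{eq:Fh}--\eqref{eq:Xx}. The key technical input is precisely the uniform-in-$R$ estimate near $\nu=0$ recorded at the end of Lemma~\ref{lma.f5}, where the Legendre function is non-oscillatory and some care is needed.
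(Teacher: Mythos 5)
Your argument is correct and follows essentially the same route as the paper: truncate the $t$-integration to a finite interval, apply Fubini on the resulting absolutely convergent strip, and pass to the limit $R\to\infty$ by dominated convergence using the uniform-in-$R$ bounds on the integral \eqref{eq:Xx1f} from Lemma~\ref{lma.f5}. The only (immaterial) difference is that you make the substitution $x=t+1$ and track the phase $e^{i\nu}$ explicitly, which the paper leaves implicit.
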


\begin{proof}
 It follows from \eqref{f2}   that 
\[
(\Psi\Phi {\bf g}) (\tau)= (2\pi)^{-1/2} \sqrt{\tau\tanh(\pi \tau)} \lim_{R\to \infty}
\int_0^R  d t
  P_{-\frac12+i\tau}(t +1)  
  \big(\int_{-\infty}^\infty   {\bf g}(\nu)
  e^{- i \nu t}    d\nu \big).
\]
Changing the order of integrations by the Fubini theorem, we obtain representation \eqref{eq:Fh} with
\[
\psi (\tau)= (2\pi)^{-1/2} \lim_{R\to \infty}
  \int_{-\infty}^\infty   {\bf g}(\nu)
     \big( \int_0^{R} 
  P_{-\frac12+i\tau}(t+1 )e^{ - i\nu t} d t \big)    d\nu . 
\]
Using the assertion of Lemma~\ref{lma.f5}   about integral \eqref{eq:Xx1f},
we can pass here to the limit by the dominated convergence  theorem.
 \end{proof}

Now we are in a position to complete the proof of Theorem~\ref{sm}.



\begin{proof}
Consider, for example, the sign $``-"$.
 We have to check the estimates
\begin{equation}
|( {  F}  Q_{-1}^\beta  f) (\tau)| \leq C \| f \|_{L^2(\Bbb T)}
\label{eq:X1}
\end{equation}
and
\begin{equation}
| ({  F}  Q_{-1}^\beta   f) (\tau')-  ({  F}  Q_{-1}^\beta   f) (\tau ) | \leq C |\tau'- \tau |^\gamma \| f \|_{L^2(\Bbb T)},\q \gamma<\beta-1/2, 
\label{eq:X2}\end{equation}
for    $\tau$ and $ \tau'$ in compact subintervals of ${\Bbb R}_{+}$ and all $f\in L^2 (\Bbb T)$. We can of course assume that $f$ belongs to the set ${\cal U}^* {\cal S}$ dense in $L^2 (\Bbb T)$.

 Put  $g= Q_{-1}^\beta f$, ${\bf f}={\cal U} f$, ${\bf g}={\cal U}g$ and $ {\bf q}   (\nu)= \big|\ln |\nu  | \big|^{-1 }$ for $ |\nu   | \leq e^{-1} $,
 $  {\bf q}   (\nu) =1$ for $ |\nu | \geq e^{-1}$.  Then using   notation  \eqref{eq:Fh}, we can equivalently rewrite estimates \eqref{eq:X1} and \eqref{eq:X2}  as
\begin{equation}
| \psi(\tau) |Ê\leq C \|{\bf q}^{-\beta}{\bf g}\| 
 \label{eq:Xx2}\end{equation}
 and
 \begin{equation}
| \psi(\tau')- \psi (\tau)|Ê\leq C  |\tau'- \tau |^\gamma\|{\bf q}^{-\beta}{\bf g}\| ,\q \gamma<\beta-1/2.
 \label{eq:XY2}\end{equation}
 
  By the Schwarz inequality, it follows from representation \eqref{eq:Xx} that
  \[
| \psi(\tau) |Ê\leq \|{\bf q}^{\beta}w_{\tau}\|  \|{\bf q}^{-\beta}{\bf g}\| 
\]
where ${\bf q}^{\beta}w_{\tau}\in L^2 ({\Bbb R})$ if $\beta>1/2$ by virtue of Lemma~\ref{lma.f5}. This proves \eqref{eq:Xx2}. Estimate
\eqref{eq:XY2} can be obtained quite similarly if one takes into account that $\abs{ w_{\tau'}(\nu)  -w_{\tau}(\nu)}$ is bounded 
by $C |\tau'-\tau |    \abs{\nu}^{-1 }$ for $|\nu| \geq 1/2$ and by $C |\tau'-\tau |^ \gamma   \abs{\nu}^{-1/2}\big|\ln |\nu | \big|^ \gamma $ for $|\nu| \leq 1/2$ according to estimates \eqref{f4} and \eqref{f5}, respectively. 
\end{proof}

   %
\section{Model operators for jumps at complex points}\label{sec.fmoc}

Here we construct    ``model'' operators $H_a$ for   pairs $a,\bar{a} $ of complex conjugate points.   Although the operators $H_a$  are not Hankel in the space ${\Bbb H}^2_{+}({\Bbb T})$, they can be realized as Hankel operators  in the space of two components vector valued functions.
 In the first subsection, we describe the necessary  representation of ${\Bbb H}^2_{+}({\Bbb T})$.
 The operators  $H_a$  are diagonalized in Theorem~\ref{sm1D} and the class of $H_a$-smooth operators is found in Theorem~\ref{sm1}. 


\medskip
 
{\bf 5.1.}
Let us identify the spaces $L^2 ({\Bbb T})\otimes {\Bbb C}^2$ and 
 $L^2 ({\Bbb T})$ and, in particular, their subspaces ${\Bbb H}^2_{+}({\Bbb T})\otimes {\Bbb C}^2$ and 
 ${\Bbb H}^2_{+}({\Bbb T})$. Put 
$f_\even   =2^{-1} (I+ {\sf R} )f$,  $f_\odd   =2^{-1} (I- {\sf R} )f$ where ${\sf R}$ is operator   \eqref{eq:Re}. Evidently, $f_\even   $ and  $f_\odd    $
     are the even and odd parts of $f$. Using notation \eqref{eq:CF}, \eqref{eq:CF1}, we set
   \begin{equation}
    \begin{split}
f^{(+)} (\mu) : & = f_\even (\mu^{1/2}) = \sum_{n=-\infty}^\infty \hat{f}_{2n}\mu^n,
\\
f^{(-)} (\mu) : & = \mu^{-1/2} f_\odd (\mu^{1/2}) = \sum_{n=-\infty}^\infty \hat{f}_{2n+1}\mu^n.
  \end{split}
\label{eq:CF3y}\end{equation}
Then\footnote{The upper index ``$\top$" means that a vector is regarded as a column.} 
 \begin{equation}
{\sf U}: f(\mu)\mapsto (f^{(+)}(\mu), f^{(-)}(\mu))^\top =: \vec{f}(\mu)
\label{eq:CF3}\end{equation}
 is a unitary mapping of  $L^2 ({\Bbb T}) $ onto  $L^2 ({\Bbb T}) \otimes {\Bbb C}^2$ and of 
  ${\Bbb H}^2_{+} ({\Bbb T}) $ onto  ${\Bbb H}^2_{+} ({\Bbb T}) \otimes {\Bbb C}^2 $.  Obviously, the operator
  ${\sf U}^*: L^2 ({\Bbb T}) \otimes {\Bbb C}^2\to   L^2 ({\Bbb T}) $ acts by the formula
  \[
  ({\sf U}^*  \vec{f})(\mu)= f^{(+)}(\mu^2)+ \mu f^{(-)}(\mu^2)   .
  \]
  
  A Hankel operator $ {\sf H }={\sf H }({\Sigma})$ in the space ${\Bbb H}^2_{+}({\Bbb T})\otimes {\Bbb C}^2$ is defined by formula \eqref{eq:HA} where  $  (J \vec{f})(\mu)= \bar{\mu}    \vec{f}( \bar{\mu})$, $  ( \Omega  \vec{f})(\mu)= \mu \Sigma(\mu)  \vec{f}(\mu)$
  and the symbol
     \begin{equation}
 \Sigma(\mu) =   \begin{pmatrix}  \sigma_{1,1} (\mu ) & \sigma_{1,2} (\mu )
 \\ 
\sigma_{2,1} (\mu )
& \sigma_{2,2} (\mu )
 \end{pmatrix} .
\label{eq:CF4}\end{equation}
  is a $2\times 2$ matrix-valued function.

    An easy calculation shows that, for a Hankel operator $   H(\omega):  {\Bbb H}^2_{+} ({\Bbb T}) \to {\Bbb H}^2_{+} ({\Bbb T})$,  the operator
\[
  {\sf H } (\Sigma):= {\sf U}H (\omega){\sf U}^*: {\Bbb H}^2_{+} ({\Bbb T}) \otimes {\Bbb C}^2 \to {\Bbb H}^2_{+} ({\Bbb T}) \otimes {\Bbb C}^2 
\]
   is also the   Hankel operator     with    symbol \eqref{eq:CF4} where
        \begin{equation}\begin{split}
\sigma_{1,1} (\mu ) = &\omega_\even (\mu^{1/2}),\q \sigma_{2,2} (\mu )   =\mu^{-1}
 \omega_\even (\mu^{1/2}), 
 \\
 & \sigma_{1,2} (\mu )= \sigma_{2,1} (\mu )=
  \mu^{-1/2}\omega_\odd (\mu^{1/2})  .
\end{split} \label{eq:CF4n}\end{equation}
On the other hand,   for a Hankel operator ${\sf H} (\Sigma) $ in the space ${\Bbb H}^2_{+} ({\Bbb T})\otimes {\Bbb C}^2$, the operator $ {\sf U}^*{\sf H }(\Sigma) {\sf U}=: H (\Sigma)$ is   Hankel in the space  ${\Bbb H}^2_{+} ({\Bbb T}) $ if and only if $ \sigma_{1,1}(\mu)= \mu \sigma_{2,2}(\mu)$ and $ \sigma_{1,2}(\mu)=  \sigma_{2,1}(\mu)$. In this case the symbol $\omega(\mu)$ of this operator can be constructed by formulas \eqref{eq:CF4n}.

\medskip

{\bf 5.2.}
 First, we construct a model operator   corresponding to the  pair $(i,-i )$.  Clearly, the symbol 
     \begin{equation}
\omega_\varphi(\mu)=  (\sin \varphi - \mu \cos\varphi) v (\mu^2), \q  \varphi\in [0,2\pi),
  \label{eq:si}\end{equation}
  is smooth everywhere except the points $\pm  i$  where it has the jumps  $\pm 2 e^{\pm i\varphi}$.
Although the function $ \omega_\varphi(\mu)$ looks simple, we do not know how to diagonalize the Hankel operator $H(\omega_\varphi)$ explicitly.    Therefore we distinguish its singular part which will be done in the representation ${\Bbb H}^2_{+} (\Bbb T)\otimes {\Bbb C}^2$.
   It follows from formulas   \eqref{eq:CF4} and \eqref{eq:CF4n} that the symbol of the Hankel operator  $ {\sf U}H (\omega_\varphi) {\sf U}^*= :{\sf H}  (\Sigma_\varphi)$ acting  in the space ${\Bbb H}^2_{+} (\Bbb T)\otimes {\Bbb C}^2$ is   the matrix-valued function
     \begin{equation}
\Sigma_\varphi(\mu) =\Sigma_\varphi^{(0)}(\mu)+ \widetilde{\Sigma}_\varphi(\mu)
\label{eq:CF5}\end{equation}
  where
      \begin{equation}
 \Sigma_\varphi^{(0)}(\mu) =  \begin{pmatrix} \sin \varphi  &- \cos\varphi
 \\ 
- \cos\varphi
&- \sin \varphi  
 \end{pmatrix} v (\mu)
\label{eq:CF6}\end{equation}
and
 \begin{equation}
 \widetilde{\Sigma}_\varphi (\mu) =    \begin{pmatrix}  0 &0
 \\ 
0
& \sin \varphi 
 \end{pmatrix} (1+\bar{\mu} ) v (\mu)  .
\label{eq:CF6b}\end{equation}
 Note that  the symbol $ \Sigma_\varphi^{(0)}(\mu)$ has a jump at the point $\mu=-1$ while the symbol $ \widetilde{\Sigma}_\varphi(\mu)$  is a Lipschitz continuous function. Equality \eqref{eq:CF5} implies that
   \begin{equation}
H (\omega_\varphi) = {\sf U}^*{\sf H}  (\Sigma_\varphi^{(0)}){\sf U} +
 {\sf U}^*{\sf H}  (\widetilde{\Sigma}_\varphi){\sf U}. 
\label{eq:LL}\end{equation}

Diagonalizing the $2\times 2$ matrix in the right-hand side of \eqref{eq:CF6}, we see that
   \[
 \Sigma_\varphi^{(0)}(\mu) = v (\mu)   Y_{\varphi}^*
    \begin{pmatrix} 1  & 0 
 \\ 
0
&-1
 \end{pmatrix} 
 Y_{\varphi}  
\]
where
 \begin{equation}
Y_{\varphi}=
\frac{1}{\sqrt{2}} 
\begin{pmatrix} \frac{\cos\varphi}{\sqrt{1-\sin\varphi}}& - \sqrt{1-\sin\varphi} 
 \\ 
\frac{\cos\varphi}{\sqrt{1+\sin\varphi}} 
&\sqrt{1+\sin\varphi} 
 \end{pmatrix}.
\label{eq:DD2}\end{equation}
 It follows that
 \[
{\sf H} ( \Sigma_\varphi^{(0)}) =   Y_{\varphi}^*   \begin{pmatrix} H(v)  & 0 
 \\ 
0
&- H(v)  
 \end{pmatrix} Y_{\varphi} .
 \]
Thus  Lemma~\ref{lme} yields  the  explicit diagonalization of the operator  ${\sf H} ( \Sigma_\varphi^{(0)}) $. In particular,   the following result is a direct consequence of  Theorem~\ref{smD}.

\begin{lemma}\label{sm1xD}
The operators ${\sf H} ( \Sigma_\varphi^{(0)})$
  and hence $H (\Sigma_\varphi^{(0)}) : = {\sf U}^*{\sf H} (\Sigma_\varphi^{(0)}){\sf U}$ have the purely a.c. simple spectrum coinciding with the interval $[-  1,  1]$.
     \end{lemma}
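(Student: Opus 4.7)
The plan is to read the spectral structure of ${\sf H}(\Sigma_\varphi^{(0)})$ directly from the block-diagonalization already displayed just before the lemma, and then transfer it to $H(\Sigma_\varphi^{(0)})$ via the unitary ${\sf U}$. The first step is a routine verification that the matrix $Y_\varphi$ in \eqref{eq:DD2} is orthogonal: using $\cos^2\varphi = (1-\sin\varphi)(1+\sin\varphi)$ one checks that the two columns have unit norm and are mutually orthogonal, so $Y_\varphi^* Y_\varphi = I$. Consequently $Y_\varphi$, acting fibre-wise as a constant $2\times 2$ matrix, defines a unitary operator on ${\Bbb H}^2_{+}({\Bbb T})\otimes{\Bbb C}^2$, and the identity
\[
{\sf H}(\Sigma_\varphi^{(0)}) = Y_\varphi^* \begin{pmatrix} H(v) & 0 \\ 0 & -H(v) \end{pmatrix} Y_\varphi
\]
exhibits ${\sf H}(\Sigma_\varphi^{(0)})$ as unitarily equivalent to the orthogonal sum $H(v)\oplus(-H(v))$ on ${\Bbb H}^2_{+}({\Bbb T})\oplus{\Bbb H}^2_{+}({\Bbb T})$.

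Next, I invoke Theorem~\ref{smD} (with the minus sign), which states that $H(v)=H(v_-)$ has purely absolutely continuous simple spectrum equal to $[0,1]$. Hence $-H(v)$ has purely absolutely continuous simple spectrum equal to $[-1,0]$. The two spectra overlap only at the single point $\{0\}$, which has Lebesgue measure zero and so does not contribute to the absolutely continuous part. Therefore the orthogonal sum $H(v)\oplus(-H(v))$ has purely absolutely continuous \emph{simple} spectrum coinciding with $[-1,0]\cup[0,1]=[-1,1]$. Unitary equivalence via $Y_\varphi$ preserves all these spectral properties, giving the claim for ${\sf H}(\Sigma_\varphi^{(0)})$.

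Finally, since ${\sf U}:L^2({\Bbb T})\to L^2({\Bbb T})\otimes{\Bbb C}^2$ is the unitary map \eqref{eq:CF3}, the operator $H(\Sigma_\varphi^{(0)}) = {\sf U}^*{\sf H}(\Sigma_\varphi^{(0)}){\sf U}$ is unitarily equivalent to ${\sf H}(\Sigma_\varphi^{(0)})$ and therefore has the same purely absolutely continuous simple spectrum $[-1,1]$. There is really no hard step here: everything is already prepared by the block diagonalization and Theorem~\ref{smD}. The only points worth attention are the orthogonality of $Y_\varphi$ (an explicit trigonometric check) and the observation that the overlap of the spectra of the two summands is a measure-zero set, so simplicity is preserved in the direct sum.
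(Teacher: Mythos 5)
Your proposal is correct and follows essentially the same route as the paper: the paper derives the identity ${\sf H}(\Sigma_\varphi^{(0)})=Y_\varphi^*\bigl(H(v)\oplus(-H(v))\bigr)Y_\varphi$ from the diagonalization of the constant matrix in \eqref{eq:CF6} and then declares the lemma a direct consequence of Lemma~\ref{lme} and Theorem~\ref{smD}. Your added checks (orthogonality of $Y_\varphi$, and that the single overlap point $0$ of the spectra $[0,1]$ and $[-1,0]$ has measure zero so simplicity survives the direct sum) are exactly the details the paper leaves implicit.
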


 Note that in the   particular case  $\sin\varphi =0$, we have  $\Sigma_\varphi(\mu)=\Sigma_\varphi^{(0)}(\mu)$ so that
  the operator $H(\omega_\varphi )$ can be  explicitly diagonalized.
  

\medskip
 
{\bf 5.3.}
The case of  jumps  at  arbitrary    pairs $(a, \bar{a})$, $\Im a>0$, 
of complex conjugate points of $\Bbb T$ can be reduced to the case $a=i$. 
To that end, we consider the transformation of symbols under special fractional linear   maps of the unit disc corresponding to dilations of the upper half-plane.

\begin{lemma}\label{conf}
Put   
  \begin{equation}
 (T_{\alpha} f)(\mu)= \sqrt{1-\alpha^2} (1+\alpha\mu)^{-1}f\big(\frac{\mu+ \alpha}{1+\alpha \mu}\big),\q \alpha\in (-1,1).
\label{eq:conf}\end{equation}
 Then $T_{\alpha}$ is   the unitary operator in ${\Bbb H}^2_{+}({\Bbb T})$ and, for an arbitrary Hankel operator   $H(\omega)$, we have
  \begin{equation}
  T_{\alpha}^* H(\omega) T_{\alpha}= H(\omega^{(\alpha)})
  \label{eq:conf1}\end{equation}
  where 
  \begin{equation}
\omega^{(\alpha)}(\mu)= \mu^{-1} \frac{\mu- \alpha}{1-\alpha \mu}\omega\big(\frac{\mu- \alpha}{1-\alpha \mu}\big).
\label{eq:conf2}\end{equation}
   \end{lemma}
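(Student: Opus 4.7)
The first task is to establish unitarity of $T_\alpha$. The map $\phi_\alpha(\mu)=(\mu+\alpha)/(1+\alpha\mu)$ is a M\"obius automorphism of the unit disc, and restricts on $\bbT$ to a diffeomorphism with Jacobian (relative to the normalized arc-length measure $dm$) equal to $|\phi_\alpha'(\mu)|=(1-\alpha^2)/|1+\alpha\mu|^2$. A direct change of variables then yields $\norm{T_\alpha f}_{L^2(\bbT)}=\norm{f}_{L^2(\bbT)}$. A short algebraic check confirms $T_\alpha T_{-\alpha}=I$, using the identity $1-\alpha\phi_\alpha(\mu)=(1-\alpha^2)/(1+\alpha\mu)$; hence $T_\alpha$ is unitary on $L^2(\bbT)$ with $T_\alpha^{-1}=T_\alpha^*=T_{-\alpha}$. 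Since $\phi_\alpha$ maps $\mathbb D$ onto itself and the prefactor $\sqrt{1-\alpha^2}(1+\alpha\mu)^{-1}$ is analytic in $\overline{\mathbb D}$ (because $|\alpha|<1$ places the pole $\mu=-1/\alpha$ outside the disc), $T_\alpha$ preserves the Hardy class $\bbH^2_+(\bbT)$; the analogous argument for $\bbH^2_-(\bbT)$ shows that $T_\alpha$ commutes with the Riesz projection $P_+$.

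The second and main task is to verify the conjugation identity $T_\alpha^* \Omega J T_\alpha=\Omega^{(\alpha)} J$ on $L^2(\bbT)$; relation \eqref{eq:conf1} then follows from $H(\omega)=P_+\Omega J P_+^*$ and $[T_\alpha,P_+]=0$. I would compute directly, one operator at a time. Applying $T_\alpha$, then $J$ (exploiting $\phi_\alpha(\bar\mu)=\overline{\phi_\alpha(\mu)}$ and $\bar\mu=\mu^{-1}$ on $\bbT$), then the multiplication $\Omega$, and finally $T_\alpha^*$, one finds after telescoping that
\[
(T_\alpha^*\Omega J T_\alpha f)(\mu)
=\mu^{-1}\,\frac{\mu-\alpha}{1-\alpha\mu}\,\omega\!\Big(\tfrac{\mu-\alpha}{1-\alpha\mu}\Big)\,f(\bar\mu)
=\omega^{(\alpha)}(\mu)\,f(\bar\mu).
\]
Since $f(\bar\mu)=\mu (J f)(\mu)$ on $\bbT$, the right-hand side equals $\mu\omega^{(\alpha)}(\mu)(Jf)(\mu)=(\Omega^{(\alpha)}J f)(\mu)$, which is exactly what is needed.

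The one technical point is the cancellation in the final step: four rational factors with denominators $1\pm\alpha\mu$ arise from $T_\alpha$, $T_\alpha^*$, and the two square-root prefactors, and they must collapse to produce the factor $\mu^{-1}(\mu-\alpha)/(1-\alpha\mu)$ in front of $\omega(\phi_{-\alpha}(\mu))$. The identity that drives this collapse is $\phi_{-\alpha}(\mu)+\alpha=\mu(1-\alpha^2)/(1-\alpha\mu)$; combined with $(\sqrt{1-\alpha^2})^2=1-\alpha^2$ it accounts for the $\mu^{-1}$ and for the precise multiplicative structure of $\omega^{(\alpha)}$. No deeper ingredient is involved: the lemma is essentially a transport formula for Hankel operators under M\"obius automorphisms of the disc.
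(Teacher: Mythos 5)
Your proof is correct, but it takes a different route from the paper. The paper avoids the direct circle computation entirely: it conjugates by the unitary $\mathcal U$ of \eqref{eq:un} into the representation ${\Bbb H}^2_{+}({\Bbb R})$, where $T_\alpha$ becomes the dilation $({\bf T}_{\alpha}{\bf f})(\nu)=\sqrt{(1+\alpha)/(1-\alpha)}\,{\bf f}\bigl(\tfrac{1+\alpha}{1-\alpha}\nu\bigr)$. Since a positive dilation manifestly commutes with ${\bf P}_{+}$ and with $({\bf J}{\bf f})(\nu)={\bf f}(-\nu)$, conjugation of ${\bf H}(\psi)$ simply rescales the symbol, $\psi^{(\alpha)}(\nu)=\psi\bigl(\tfrac{1-\alpha}{1+\alpha}\nu\bigr)$, and formula \eqref{eq:conf2} then drops out of the symbol correspondence \eqref{eq:pwu} on pulling back to the circle. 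Your argument instead stays on $\bbT$ and carries out the four-factor M\"obius cancellation by hand; I checked the key steps (the identities $1-\alpha\phi_\alpha(\mu)=(1-\alpha^2)/(1+\alpha\mu)$ and $1+\alpha\overline{\phi_{-\alpha}(\mu)}=\mu(1-\alpha^2)/(\mu-\alpha)$ on $\bbT$, the commutation of $T_\alpha$ with $P_+$ via invariance of both Hardy classes, and the final rewriting $f(\bar\mu)=\mu(Jf)(\mu)$) and they all hold, yielding exactly $\Omega^{(\alpha)}J$. The paper's route buys conceptual transparency (the lemma is "Hankel operators transform covariantly under dilations") at the cost of invoking the half-plane dictionary; yours is self-contained and makes the origin of the prefactor $\mu^{-1}(\mu-\alpha)/(1-\alpha\mu)$ explicit. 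Either is acceptable.
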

   
   \begin{proof}
   It is convenient to make calculations in the representation ${\Bbb H}^2_{+}({\Bbb R})$.  It follows from formulas \eqref{eq:un} and \eqref{eq:un1} that the operator ${\bf T}_{\alpha}={\cal U}T_{\alpha} {\cal U}^*$ is the dilation:
      \[
 ({\bf T}_{\alpha} {\bf f})(\nu)= \sqrt{\frac{1+\alpha}{1-\alpha}}\,  {\bf f}\big(\frac{1+\alpha}{1-\alpha}\nu\big) .
\]
Therefore for a Hankel operator $ {\bf H} (\psi)$ with symbol $\psi(\nu)$, we have
\begin{equation}
{\bf T}_{\alpha}^* {\bf H} (\psi){\bf T}_{\alpha}={\bf H} (\psi_{\alpha})
\label{eq:conf3}\end{equation}
where
  \begin{equation}
   \psi^{(\alpha)}(\nu)=    \psi \big(\frac{1-\alpha}{1+\alpha}\nu\big).
 \label{eq:conf3z}\end{equation}

Recall that the symbols of the operators ${\bf H} (\psi)={\cal U} H(\omega) {\cal U}^*$ and $H(\omega)$ are linked by formula \eqref{eq:pwu}. 
Hence using \eqref{eq:conf3}, \eqref{eq:conf3z} and
making pull back to  ${\Bbb H}^2_{+}({\Bbb T})$, we obtain formulas \eqref{eq:conf1},  \eqref{eq:conf2}.
     \end{proof}
     
     For a pair $(a, \bar{a})$ where $a=e^{i\theta}$, $\theta\in (0,\pi)$, we now put
      \begin{equation}
\omega_{\varphi, \theta}(\mu)= \mu^{-1} \frac{\mu- \alpha}{1-\alpha \mu}\omega_{\varphi}\big(\frac{\mu- \alpha}{1-\alpha \mu}\big)
\label{eq:conf4}\end{equation}
where $\omega_{\varphi}(\mu)$ is symbol \eqref{eq:si}  and
  \[
\alpha= \frac{a-i}{1-ia}= \tan(\pi/4 - \theta/2).
\]
Observe that the transformation $\mu\mapsto  (\mu- \alpha) (1-\alpha \mu)^{-1}$ sends the pair $(a, \bar{a})$ into $(i,-i)$.
Therefore the symbol  $ \omega_{\varphi, \theta}(\mu)$ is smooth everywhere except the points $a$ and $\bar{a}$ where it has the jumps   $2 i  e^{i(\varphi-\theta)}$ and $2 i  e^{i (\theta-\varphi)}$, respectively. It follows from Lemma~\ref{conf} that
\[
H(\omega_{\varphi, \theta}) = T_{\alpha}^* H(\omega_{\varphi}) T_{\alpha} \q {\rm where} \q\alpha=   \tan(\pi/4 - \theta/2).
\]
  
  Putting together this equality with \eqref{eq:LL}, we find that
  \begin{equation}
H(\omega_{\varphi, \theta}) = H_{\varphi, \theta}^{(0)}+  \wt{H}_{\varphi, \theta} 
  \label{eq:conf6}\end{equation}
  where
    \begin{equation}
 H_{\varphi, \theta}^{(0)} = T_{\alpha}^* {\sf U}^* {\sf H} (\Sigma_{\varphi }^{(0)}){\sf U} T_{\alpha}
  \label{eq:conf7}\end{equation}
  and
  \begin{equation}
 \wt{H}_{\varphi, \theta} = T_{\alpha}^* {\sf U}^* {\sf H} (\wt{\Sigma}_{\varphi } ){\sf U} T_{\alpha}.
   \label{eq:conf7B}\end{equation}
  Of course Lemma~\ref{sm1xD} yields the explicit diagonalization of the operator $ H_{\varphi, \theta}^{(0)}$.
  
 Let us summarize the results obtained.
  
\begin{theorem}\label{sm1D}
Let the symbol $\omega_{\varphi, \theta}$ be defined by formulas \eqref{eq:si} and \eqref{eq:conf4}, and let  the symbols $ \Sigma^{(0)}_{\varphi   }$, $ \wt{\Sigma}_{\varphi   }$ be defined by formulas \eqref{eq:CF6}, \eqref{eq:CF6b}. 
Then:

$1^0$ Equalities \eqref{eq:conf6}  -- \eqref{eq:conf7B}  hold.

 $2^0$ The operator $ H_{\varphi, \theta}^{(0)}$ has the  purely a.c.    simple spectrum coinciding with the interval $[- 1 , 1  ]$. 
 \end{theorem}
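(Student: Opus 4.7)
My plan is to obtain both assertions by straightforward assembly of the material already prepared in Sections~4 and~5.

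For part~$1^0$, I would first identify $\omega_{\varphi,\theta}$ with the transformed symbol $\omega_{\varphi}^{(\alpha)}$ defined by \eqref{eq:conf2} for the particular choice $\alpha=\tan(\pi/4-\theta/2)$; this is precisely how \eqref{eq:conf4} is set up, since the M\"obius map $\mu\mapsto (\mu-\alpha)(1-\alpha\mu)^{-1}$ sends the pair $(a,\bar a)$ to $(i,-i)$. Lemma~\ref{conf} then gives
\[
H(\omega_{\varphi,\theta}) = T_{\alpha}^* H(\omega_{\varphi}) T_{\alpha}.
\]
Next I substitute the splitting \eqref{eq:LL} for $H(\omega_\varphi)$ and conjugate each summand by $T_{\alpha}$. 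Matching the two summands against the definitions \eqref{eq:conf7} and \eqref{eq:conf7B} directly yields \eqref{eq:conf6}. No analytic estimate is required; this is just bookkeeping across three unitary changes of representation $T_{\alpha}$, $\sf U$ and the decomposition \eqref{eq:CF5}.

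For part~$2^0$, the key observation is that \eqref{eq:conf7} exhibits $H_{\varphi,\theta}^{(0)}$ as unitarily equivalent to ${\sf H}(\Sigma_{\varphi}^{(0)})$ via the unitary operator ${\sf U}T_{\alpha}:{\Bbb H}_+^2({\Bbb T})\to{\Bbb H}_+^2({\Bbb T})\otimes{\Bbb C}^2$ (unitarity of $T_{\alpha}$ comes from Lemma~\ref{conf}, and of $\sf U$ from subs.~5.1). Since unitary equivalence preserves the spectrum together with its a.c., singular continuous, and point parts and also preserves the multiplicity function, the spectral characterization of ${\sf H}(\Sigma_{\varphi}^{(0)})$ already established in Lemma~\ref{sm1xD}---purely a.c., simple, coinciding with $[-1,1]$---transfers verbatim to $H_{\varphi,\theta}^{(0)}$.

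Because every step is an appeal to an earlier statement or a definitional identification, there is no substantive obstacle. The only point where one should be mildly careful is in verifying that the block matrix $2\times 2$ diagonalization \eqref{eq:DD2} together with the scalar diagonalization of $H(v)$ given by Lemma~\ref{lme} really delivers a \emph{simple} a.c. spectrum on $[-1,1]$ rather than spectrum of multiplicity two; but this is already settled inside Lemma~\ref{sm1xD}, since the two blocks $\pm H(v)$ produce two bands $[0,1]$ and $[-1,0]$ whose interiors are disjoint, giving multiplicity one on $(-1,0)\cup(0,1)$. Consequently the proof reduces to stating the identifications above and invoking Lemma~\ref{conf} and Lemma~\ref{sm1xD}.
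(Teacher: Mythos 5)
Your proposal is correct and follows essentially the same route as the paper: the paper obtains \eqref{eq:conf6}--\eqref{eq:conf7B} by combining Lemma~\ref{conf} (with $\alpha=\tan(\pi/4-\theta/2)$) with the splitting \eqref{eq:LL}, and then deduces part $2^0$ from Lemma~\ref{sm1xD} via the unitary equivalence \eqref{eq:conf7}. Your extra remark about why the spectrum is simple (the two blocks $\pm H(v)$ contribute the disjoint bands $[0,1]$ and $[-1,0]$) is exactly the point already settled in Lemma~\ref{sm1xD}.
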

  
  \medskip
 
{\bf 5.4.}
Similarly to subs.~4.3,  we now  extend all operators from ${\Bbb H}_+^2 ({\Bbb T})$ onto  $L^2 ({\Bbb T})$ setting them to zero   on ${\Bbb H}_+^2 ({\Bbb T})^{\bot}$. 
 We again use Definition~\ref{strsm} with  $ {\cal N}={\Bbb C}$.
 Our goal   is   to check the following result.

\begin{theorem}\label{sm1}
Let the  operator $H_{\varphi, \theta}^{(0)}$ be    defined by  formula \eqref{eq:conf7}, and let the operator $Q_{a}$ be defined by formulas \eqref{eq:Xs} and \eqref{eq:X}.
Then the operator   $Q_{a}^{\beta } 
 Q_{\bar{a}}^{\beta }    $ for $\beta>1/2$ is strongly $H_{\varphi, \theta}^{(0)}$-smooth on $(-  1,  0)\cup (   0, 1)$ for the diagonalization $F Y_\varphi {\sf U} T_\alpha  $   with any exponent   $\gamma<\beta-1/2$.
 \end{theorem}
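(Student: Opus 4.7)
The plan is to reduce the smoothness of $Q_{a}^{\beta}Q_{\bar a}^{\beta}$ with respect to $H_{\varphi,\theta}^{(0)}$ to the smoothness result of Theorem~\ref{sm} for the single real-jump model $H(v)$, by unfolding the chain of unitary transformations $T_\alpha$, $\sf U$, $Y_\varphi$ one at a time and tracking what happens to the multiplier.

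First I would use \eqref{eq:conf7} together with Lemma~\ref{conf}. Because $T_\alpha$ is unitary and the diagonalization of $H_{\varphi,\theta}^{(0)}$ is obtained by pre-composing the diagonalization of ${\sf U}^{*}{\sf H}(\Sigma_\varphi^{(0)}){\sf U}$ with $T_\alpha$, the claim reduces to strong smoothness of $T_\alpha Q_{a}^{\beta}Q_{\bar a}^{\beta}T_\alpha^{*}$ with respect to ${\sf U}^{*}{\sf H}(\Sigma_\varphi^{(0)}){\sf U}$ for the diagonalization $F Y_\varphi{\sf U}$. By \eqref{eq:conf} the operator $T_\alpha Q_{a}^{\beta}Q_{\bar a}^{\beta}T_\alpha^{*}$ is multiplication by $q_{a}^{\beta}q_{\bar a}^{\beta}\circ M^{-1}$, where $M(\mu)=(\mu+\alpha)/(1+\alpha\mu)$ is a diffeomorphism of $\bbT$ sending $(i,-i)$ to $(a,\bar a)$. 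Since $M$ and $M^{-1}$ are Lipschitz bijections of $\bbT$, $q_{a}(M^{-1}(\mu))$ is comparable (up to bounded factors and a bounded, smooth remainder away from $\pm i$) to $q_{i}(\mu)$, and similarly $q_{\bar a}\circ M^{-1}\asymp q_{-i}$. So up to a bounded multiplier the problem becomes: prove that $Q_{i}^{\beta}Q_{-i}^{\beta}$ is strongly smooth with respect to ${\sf U}^{*}{\sf H}(\Sigma_\varphi^{(0)}){\sf U}$ for the diagonalization $F Y_\varphi{\sf U}$.

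Next I would push this through $\sf U$. A short calculation with the formulas of subs.~5.1 shows that ${\sf U}q{\sf U}^{*}$ is the $2\times 2$ matrix-valued multiplier with entries determined by the even and odd parts of $q$. The function $q_{i}^{\beta}q_{-i}^{\beta}$ is even on $\bbT$ (because $q_{-a}(\mu)=q_{a}(-\mu)$), hence its odd part vanishes, and ${\sf U}Q_{i}^{\beta}Q_{-i}^{\beta}{\sf U}^{*}$ is the scalar multiplication $\operatorname{diag}(p(\mu),p(\mu))$ where $p(\mu)=q_{i}(\mu^{1/2})^{\beta}q_{-i}(\mu^{1/2})^{\beta}$. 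From the factorization $(\mu^{1/2}-i)(\mu^{1/2}+i)=\mu+1$ we obtain $|\mu^{1/2}-i|\asymp|\mu+1|$ in a neighbourhood of $\mu=-1$ (with the other factor bounded away from $0$), and symmetrically at the other square root; consequently $p(\mu)$ is comparable to $q_{-1}(\mu)^{\beta}$ near $\mu=-1$, is smooth and bounded away from zero away from $-1$, and in particular satisfies $p\lesssim q_{-1}^{\beta}$ on $\bbT$. Because $Y_\varphi$ is a constant unitary $2\times 2$ matrix, it commutes with the scalar multiplication $\operatorname{diag}(p,p)$, so after passing through $Y_\varphi$ the problem decouples into two independent scalar problems: strong smoothness of (a multiplier bounded by) $Q_{-1}^{\beta}$ with respect to $H(v)$ (first block, spectrum $(0,1)$, diagonalization $F$) and with respect to $-H(v)$ (second block, spectrum $(-1,0)$, diagonalization $F$ followed by the reflection $\lambda\mapsto-\lambda$ which is affine in $\lambda$ and hence preserves H\"older continuity).

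Each of these is exactly Theorem~\ref{sm} (for the first block directly, and for the second block after the sign change, which is harmless for both the uniform bound \eqref{eq:X1} and the H\"older estimate \eqref{eq:X2}): the operator $Q_{-1}^{\beta}$ with $\beta>1/2$ is strongly $H(v)$-smooth with any exponent $\gamma<\beta-1/2$. Since the two blocks correspond to the disjoint spectral intervals $(0,1)$ and $(-1,0)$ of $H_{\varphi,\theta}^{(0)}$ (which is why the point $0$ must be excluded), combining the two bounds with the Schwarz-type estimate already used in the proof of Theorem~\ref{sm} yields the required smoothness of $Q_{a}^{\beta}Q_{\bar a}^{\beta}$ with exponent $\gamma<\beta-1/2$. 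The main point requiring care will be the tracking of the multiplier through $\sf U$: the identity $|\mu^{1/2}-i|\asymp|\mu+1|$ is what guarantees that the exponent $\beta$ is preserved (rather than halved or doubled) when one passes from $Q_{i}^{\beta}Q_{-i}^{\beta}$ to its pullback under the squaring substitution; the remaining calculations with $T_\alpha$ and $Y_\varphi$ are routine because both are smooth unitary transformations that interact trivially with the logarithmic weight.
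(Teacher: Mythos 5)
Your proposal is correct and follows essentially the same route as the paper's proof: conjugate by $T_\alpha$ to reduce to $a=i$ using that the transported multiplier is bounded by $Cq_i^\beta q_{-i}^\beta$, use the evenness of $q_iq_{-i}$ to pass through $\sf U$ as a scalar multiplier comparable to $q_{-1}^\beta$ near $\mu=-1$, commute with $Y_\varphi$, and invoke Theorem~\ref{sm}. Your extra details (the factorization $(\mu^{1/2}-i)(\mu^{1/2}+i)=\mu+1$ and the explicit decoupling into the $\pm H(v)$ blocks on $(0,1)$ and $(-1,0)$) are exactly what the paper leaves implicit.
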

 
 \begin{proof}
 Let  us check the first estimate \eqref{eq:ZG}, that is, 
  \begin{equation}
|( F Y_\varphi {\sf U} T_\alpha  Q_{a}^{\beta } 
 Q_{\bar{a}}^{\beta }  f) (\tau)| \leq C \| f \|_{L^2(\Bbb T)}
\label{eq:X1C}
\end{equation}
for    $\tau$   in compact subintervals of ${\Bbb R}_{+}$ and all $f\in L^2 (\Bbb T)$. 
 Observe  that the operator $T_\alpha  Q_{a}^{\beta } 
 Q_{\bar{a}}^{\beta }  T_\alpha^*  $  acts as the  multiplication by a function bounded by
 $ C q_i (\mu )^{\beta}  q_{-i} (\mu )^{\beta}$.
 Therefore  the proof
  reduces to the case $a=i$ when   $T_\alpha =I$.   Next, we use that the  function $q_i (\mu )   q_{-i} (\mu )$ is even so that we can set $g(\mu^2)=  q_i (\mu ) q_{-i} (\mu ) $.  Let $G$ be the operator of multiplication by $g(\mu)  $.
 By the definition \eqref{eq:CF3y}, \eqref{eq:CF3} of the operator $\sf U$, we have  
$  {\sf U}Q_i^{\beta } Q_{-i}^{\beta } f=    G^\beta    {\sf U}f$. Since the operators $Y_{\varphi}$ and $G^\beta $ commute,
 estimate \eqref{eq:X1C} for $a=i$ can be rewritten as
   \begin{equation}
|( F G^\beta Y_\varphi {\sf U}  
   f) (\tau)| \leq C \| f \|_{L^2(\Bbb T)} .
\label{eq:X1CC}
\end{equation}
Note that the operator $Y_\varphi {\sf U} $ is unitary and that
  the function $g(\mu)$ is a bounded   by $C \big|  \ln |\mu+1 |  \big|^{-1}$ as $\mu\to - 1$.   
   Thus for the proof of \eqref{eq:X1CC}, it remains to use that according to Theorem~\ref{sm}  the operator $G^\beta $ is smooth with respect to $H(v)$
   (see estimate \eqref{eq:X1}). The second estimate \eqref{eq:ZG} can be verified quite similarly.
   \end{proof}

\section{Main results}\label{sec.g} 

The main results are stated in subs.~1 and proven  in subs.~3. Necessary compactness results are collected in subs.~2. Then we reformulate our results   in subs.~4 in the representation ${\Bbb H}_{+}^2({\Bbb R})$ of Hankel operators.    The case of matrix-valued symbols is discussed in subs.~5.

 \medskip

{\bf 6.1.} 
Let $\omega \in L^\infty(\bbT)$ be a symbol   satisfying     condition 
\eqref{eq:HA2},  and let $H (\omega)$ be the corresponding self-adjoint  Hankel operator
\eqref{eq:HA} in ${\Bbb H}_{+}^2(\bbT)$. Our aim is to perform the spectral analysis of Hankel operators  with piecewise continuous symbols $\omega (\mu)$.

For a point $a \in \bbT$ of the discontinuity of $\omega (\mu)$, we define the jump by formula \eqref{eq:XX1} and assume condition \eqref{eq:XX3}.  
By \eqref{eq:HA2}, if $\omega (\mu)$ has a jump $\varkappa $ at some point $a \in{\Bbb T}$, then it also has
the jump $ -\bar{\varkappa}  $ at the point $\bar{a}$. In particular, the jumps at the points $\pm 1$ are purely imaginary. Let us write the points of discontinuity of $\omega$ as $  1$, $-1$, $a_{1},\ldots, a_{N_{0}}$, $\bar{a}_{1},\ldots, \bar{a}_{N_{0}}$, $\Im a_{j}> 0$, and the jumps of $\omega$ at these points as
\begin{equation}
\varkappa (\pm 1)= 2 i \kappa_{\pm }, \; \kappa_{\pm }\in{\Bbb R},\q
\varkappa (a_{j})=   2 \kappa_{j}e^{i \psi_{j}}, \; \kappa_{j}> 0 . 
\label{eq:HO2}\end{equation}
Of course, the  points $1$ or $-1$ may be regular; in this case condition \eqref{eq:XX3} at these points is not required and
$\kappa_+=0$ or $\kappa_{-}=0$. 

Thus, we accept

 \begin{assumption}\label{assuH}
A function $\omega:\bbT\to\C$ satisfies the self-adjointness condition 
\eqref{eq:HA2} and is  continuous 
apart from some jump discontinuities at finitely many points  $  1$, $-1$, $a_{1},\ldots, a_{N_{0}}$, $\bar{a}_{1},\ldots, \bar{a}_{N_{0}}$
with jumps \eqref{eq:HO2}. At every point of discontinuity $a \in \bbT$, it satisfies condition \eqref{eq:XX3}.  
\end{assumption}

Recall that ${\sf A}_{\Delta} $ is the operator of multiplication by independent variable in the space $L^2 (\Delta)$. We put 
\begin{equation}
\Delta_{\pm}=
[0,   \kappa_{\pm }] 
 \q {\rm and}
\q
 \Delta_j=[- \kappa_j,   \kappa_j] .
\label{eq:HOa}\end{equation}
The spectral structure of the operator $H=H(\omega)$ is described in the following assertion.



 \begin{theorem}\label{thm.g1a}
Let Assumption~$\ref{assuH}$ hold and $H=H(\omega)$. Then:

  $1^0$
  If $\beta_{0}>1$, then the operator $H^{\rm (ac)}$ is unitarily equivalent to the orthogonal sum
  \begin{equation}
{\sf A}_{\Delta_{+}} \oplus {\sf A}_{\Delta_{-}} \oplus \bigoplus_{j=1}^{N_{0}} {\sf A}_{\Delta_{j} }.
\label{eq:AC1}\end{equation}

    $2^0$ If $\beta_{0}>2$, then  the singular continuous spectrum of  the operator $ H $ is empty and its eigenvalues, distinct from 
$0$,  $  \kappa_{\pm }$ and   
$\pm \kappa_{j}$, $j=1,\ldots, N_{0}$, have finite multiplicities and can accumulate only to these points.  
\end{theorem}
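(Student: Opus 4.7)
The strategy is to realize $H$ as a sum of the form \eqref{eq:A} and apply the multichannel machinery of Section~\ref{sec.b}. Introduce
\[
H_+ = \kappa_+ H(v_+), \qquad H_- = \kappa_- H(v_-), \qquad H_j = \kappa_j H(\omega_{\varphi_j,\theta_j}),
\]
where the phase $\varphi_j$ is chosen so that the jump of $\kappa_j\,\omega_{\varphi_j,\theta_j}$ at $a_j$ equals $2\kappa_j e^{i\psi_j} = \varkappa(a_j)$; the self-adjointness identity \eqref{eq:HA2} then automatically matches the jump at $\bar a_j$. The residual symbol $\sigma := \omega - \kappa_+ v_+ - \kappa_- v_- - \sum_j \kappa_j \omega_{\varphi_j,\theta_j}$ is continuous on $\bbT$ and still obeys the logarithmic H\"older estimate \eqref{eq:XX3} with exponent $\beta_0$ at each former jump point, so setting $\wt H := H(\sigma)$ yields $H = H_+ + H_- + \sum_j H_j + \wt H$. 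As the global smoothing operator I take
\[
Q = \prod_{a \in \mathcal{S}} Q_a^\beta, \qquad \mathcal{S} = \{+1,-1,a_1,\bar a_1,\ldots,a_{N_0},\bar a_{N_0}\},
\]
with $\beta > 1/2$ to be fixed below.

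The main task is to verify Assumption~\ref{assu}. Part~a follows from Theorems~\ref{sm} and~\ref{sm1}: the extra factors $Q_a^\beta$ attached to points $a$ outside the singular set of a given model are bounded multipliers, so the strong smoothness of $Q$ relative to each $H_\pm$ and each $H_j$ reduces to the one-point results already proved, with exponent $\gamma = \beta - 1/2$, on the open sets $(0,\kappa_\pm)$ and $(-\kappa_j,0)\cup(0,\kappa_j)$ whose closures give the spectra of the corresponding models by Theorems~\ref{smD} and~\ref{sm1D}. Part~d is immediate from the explicit diagonalizations of Sections~\ref{sec.fmo}--\ref{sec.fmoc}: in the spectral representation $Q H_n Q^{-1}$ acts as a bounded multiplication operator. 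The analytic heart lies in parts~b and~c: since $\sigma$ is continuous with logarithmic H\"older modulus $\beta_0$, the compactness results of \cite{PY1} underlying Theorem~\ref{sym} produce a factorization $\wt H = Q^* \wt K Q$ with $\wt K$ compact whenever $2\beta < \beta_0$; and since the symbols of $H_n$ and $H_m$ for $n \neq m$ carry jumps at disjoint subsets of $\mathcal{S}$, the Hankel composition rules allow a factor vanishing logarithmically at every point of $\mathcal{S}$ to be extracted from $H_n H_m$, producing $H_n H_m = Q^* K_{n,m} Q$ with $K_{n,m}$ compact by the same machinery.

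Once Assumption~\ref{assu} is established, Theorem~\ref{ScTh} supplies asymptotically complete, mutually orthogonal wave operators $W_\pm(H,H_n)$ whose ranges decompose $\calH^{\rm(ac)}(H)$; combined with the unitary equivalence of $H_\pm^{\rm(ac)}$ and $H_j^{\rm(ac)}$ with ${\sf A}_{\Delta_\pm}$ and ${\sf A}_{\Delta_j}$, this delivers \eqref{eq:AC1} and completes $1^0$ under any choice $\beta \in (1/2, \beta_0/2)$, which is available exactly when $\beta_0 > 1$. For $2^0$, the stronger hypothesis $\beta_0 > 2$ permits $\beta \in (1, \beta_0/2)$, so that $\gamma = \beta - 1/2 > 1/2$; Theorem~\ref{main}, part $2^0$, applied with threshold set ${\cal T} = \{0,\kappa_+,\kappa_-,\pm\kappa_1,\ldots,\pm\kappa_{N_0}\}$, then yields the absence of singular continuous spectrum and the claimed eigenvalue structure. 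The principal obstacle throughout is the compactness verification in parts~b and~c: exact cancellation of every jump by the chosen models is required so that neither $\sigma$ nor the off-diagonal products carry any jump singularity, and the logarithmic H\"older regularity of $\omega$ must be strong enough to be absorbed by the weight $Q$ into a genuinely compact middle factor — which is the precise content of the hypotheses $\beta_0 > 1$ (for $1^0$) and $\beta_0 > 2$ (for $2^0$).
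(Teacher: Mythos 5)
Your overall strategy is the paper's: decompose $H$ as in \eqref{eq:A}, verify Assumption~\ref{assu}, and invoke Theorems~\ref{main}--\ref{ScTh}. But there is a genuine gap in your choice of model operators for the complex conjugate pairs. You set $H_j=\kappa_j H(\omega_{\varphi_j,\theta_j})$, the full Hankel operator with symbol $\omega_{\varphi_j,\theta_j}$. That operator is \emph{not} explicitly diagonalizable (the paper states this outright in subs.~5.2), so neither Theorem~\ref{sm1} nor Theorem~\ref{sm1D} applies to it: Theorem~\ref{sm1} establishes strong smoothness only with respect to $H^{(0)}_{\varphi,\theta}$, and Theorem~\ref{sm1D} identifies the spectrum only of $H^{(0)}_{\varphi,\theta}$. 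With your $H_j$ you cannot verify Assumption~\ref{assu}a (Definition~\ref{strsmg} presupposes that, apart from point spectrum, $H_j$ is purely a.c.\ of constant multiplicity on a known set $\Delta_j$ with no embedded eigenvalues --- precisely what is unknown a priori for $H(\omega_{\varphi_j,\theta_j})$ and is in fact part of what is being proved, via Corollary~\ref{cor2}), nor can you read off \eqref{eq:AC1} from Theorem~\ref{main}, which outputs $\bigoplus_n{\sf A}_{\Delta_n}$ with $\Delta_n$ the \emph{a priori known} spectral sets of the models. Your own text betrays the confusion: you cite Theorems~\ref{sm1} and~\ref{sm1D} for operators to which they do not apply. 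The missing step is the decomposition \eqref{eq:conf6}--\eqref{eq:conf7B}, i.e.\ \eqref{eq:KK3}: one must take $H_j=\kappa_j H^{(0)}_{\varphi_j,\theta_j}$ (the explicitly diagonalized part, conjugate to $\pm H(v)$ via $Y_{\varphi}{\sf U}T_\alpha$) and push the Lipschitz remainder $\kappa_j\wt H_{\varphi_j,\theta_j}$, whose symbol \eqref{eq:CF6b} has no jump, into $\wt H$; the same splitting is then needed again when verifying Assumption~\ref{assu}c, since Lemma~\ref{cross} addresses products of the full Hankel operators $H(\omega_n)H(\omega_m)$, not of the $H^{(0)}$'s.

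A secondary but real error: your justification of Assumption~\ref{assu}d (``in the spectral representation $QH_nQ^{-1}$ acts as a bounded multiplication operator'') is not correct. $Q$ is multiplication by $q(\mu)$ in the $\mu$-representation, not in the spectral representation of $H_n$, and $Q^{-1}$ is unbounded because $q$ vanishes at the singular points; the boundedness of $QH_nQ^{-1}=QP_+\Omega_nJP_+Q^{-1}$ is a weighted-norm inequality for the Riesz projection and is supplied by the Muckenhoupt $A_2$ result, Lemma~\ref{lma.f6}. The remaining ingredients of your outline (the weight \eqref{eq:f6H} with $\beta\in(1/2,\beta_0/2)$, compactness of $Q^{-1}H(\ti\omega)Q^{-1}$ via Lemma~\ref{CCCc} under $2\beta<\beta_0$, separation of singularities via Lemma~\ref{cross}, and the bookkeeping $\beta_0>1$ for $1^0$ versus $\beta_0>2$ and $\gamma>1/2$ for $2^0$) match the paper's argument.
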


 Recall that, to  a given Hankel operator, there correspond various symbols
whose differences belong to the space $H^\infty_{-}(\Bbb T)$. Nevertheless the formulation of Theorem~\ref{thm.g1a} has an intrinsic nature because  
functions in  $H^\infty_{-}(\Bbb T)$ cannot have jumps.


Next, we state   the limiting absorption principle for the operator $H $.

\begin{theorem}\label{thm.g1b}
Let Assumption~$\ref{assuH}$ hold with $\beta_{0}> 2$ and $H=H(\omega)$.
Let ${ Q}: H^2_{+} (\Bbb T)\to L^2  (\Bbb T)$ be the operator of multiplication by a bounded function ${ q}(\mu)$ such that
 \[
{ q}(\mu)= O(\big|\ln | \mu-a |\big| ^{-\beta}), \q \mu \to a,\q \beta> 1,
\]
  in all points of discontinuity of $\omega$, that is, in $a=1,-1, a_{1},\ldots, a_{N_{0}}, \bar{a}_{1},\ldots, \bar{a}_{N_{0}}$. 
 Then  
 the operator-valued function    ${  Q}( H  -z)^{-1}{ Q}^*$ is continuous in $z$ if $\pm \Im z\geq 0$ away from all points $0$,  $   \kappa_{\pm }$ and   
$\pm \kappa_{j}$, $j=1,\ldots, N_{0}$, and eigenvalues of the operator $H$. 
\end{theorem}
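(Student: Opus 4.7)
The plan is to deduce Theorem~\ref{thm.g1b} from the abstract limiting absorption principle of Theorem~\ref{LAP} by verifying Assumption~\ref{assu} for a suitable additive decomposition of $H$. I would write
\[
H = H_+ + H_- + \sum_{j=1}^{N_0} H_j + \tilde H,
\]
where $H_\pm = \kappa_\pm H(v_\pm)$ are built from the Mehler model of Section~\ref{sec.fmo} and each $H_j = \kappa_j H^{(0)}_{\varphi_j,\theta_j}$ is the complex-pair model of Section~\ref{sec.fmoc}, with the phases $\varphi_j$ and angles $\theta_j$ chosen so that the jump of $H_j$'s symbol at $a_j$ matches $\varkappa(a_j)$ (and similarly at $\bar a_j$). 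By construction the residual symbol $\tilde\omega$ is continuous on $\mathbb{T}$ and still satisfies the logarithmic H\"older estimate \eqref{eq:XX3} with exponent $\beta_0>2$ at every former singular point.

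To verify Assumption~\ref{assu}(a), near each singular point $a$ of $\omega$ I factor $q(\mu)=q_a(\mu)^\beta\,\tilde q_a(\mu)$ where $q_a$ is the weight \eqref{eq:Xs} and $\tilde q_a$ is bounded. Theorems~\ref{sm} and \ref{sm1} give strong smoothness of the operator $Q_a^\beta$ (for $a=\pm 1$) and of $Q_{a_j}^\beta Q_{\bar a_j}^\beta$ (for complex pairs) with respect to the corresponding model operator with exponent $\gamma=\beta-\tfrac12>\tfrac12$. Multiplication by the bounded factor $\tilde q_a$, together with the fact that $q$ is bounded away from zero outside a neighbourhood of the singularities of the particular $H_n$ under consideration, preserves strong smoothness, so $Q$ is strongly $H_n$-smooth with exponent $\gamma>\tfrac12$.

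Conditions (b) and (c) of Assumption~\ref{assu} are compactness statements, and for these I would invoke the Hankel-operator compactness machinery of \cite{PY1} recalled in Section~\ref{sec.e}. Since $\tilde\omega$ is continuous and satisfies \eqref{eq:XX3} with $\beta_0>2$, the weighted operator $Q^{-\ast}\tilde H Q^{-1}$ remains compact: the two factors of logarithmic blow-up in $q^{-1}$ are absorbed by the two units of logarithmic H\"older regularity $\beta_0>2$. For the cross products $H_n H_m$, $n\neq m$, the decisive point is that the singularities of the two model symbols lie at disjoint points of $\mathbb{T}$; inserting a smooth partition of unity between them, factoring each half through $Q^{-1}$, and invoking the compactness criteria of \cite{PY1} along the lines of Power's argument in \cite{Po1}, gives $Q^{-\ast}H_nH_m Q^{-1}$ compact. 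Condition (d) reduces to checking that the commutator $[Q,H_n]$ is controlled; this follows because $q$ varies only logarithmically while the symbol of $H_n$ is smooth outside its own singular point, precisely where $q$ is small.

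The main obstacle is the compactness verification in (c): one cannot naively sandwich the product $H_nH_m$ between $Q^{-1}$'s, because $Q^{-1}$ is singular at the singular points of both $H_n$ and $H_m$. The resolution is to split each model symbol into a part supported near its own singular point and a smooth remainder, after which the disjointness of singularities together with the logarithmic decay of $q$ yields the required compact factorization. Once Assumption~\ref{assu} is established with $\gamma>\tfrac12$, Theorem~\ref{LAP} immediately delivers H\"older continuity of $Q(H-z)^{-1}Q^\ast$ for $\pm\Im z\geq 0$ and $\Re z$ outside the threshold set $\mathcal T=\{0,\kappa_+,\kappa_-,\pm\kappa_j\}$ and outside $\spec_{p}(H)$, which is stronger than the continuity asserted in Theorem~\ref{thm.g1b}.
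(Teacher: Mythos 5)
Your proposal follows essentially the same route as the paper: decompose $H$ into the model operators $\kappa_\pm H(v_\pm)$ and $\kappa_j H^{(0)}_{\varphi_j,\theta_j}$ plus a remainder, verify parts (a)--(d) of Assumption~\ref{assu} via Theorems~\ref{sm}, \ref{sm1} and the weighted boundedness/compactness lemmas borrowed from \cite{PY1} (Lemmas~\ref{lma.f6}, \ref{CCCc}, \ref{cross}), and then invoke Theorem~\ref{LAP}. The one detail you gloss over is that $H_j=\kappa_j H^{(0)}_{\varphi_j,\theta_j}$ is not the Hankel operator $H(\omega_j)$ but differs from it by the Lipschitz-symbol correction $\kappa_j\wt{H}_{\varphi_j,\theta_j}$ of \eqref{eq:conf6}, so the remainder $\wt H$ is not simply $H(\ti\omega)$ and the cross products $H_nH_m$ are not pure products of Hankel operators with disjoint singularities; this correction must be carried along and absorbed (as the paper does in subsection~6.3 via \eqref{eq:KK3}, \eqref{eq:COMPx} and Lemmas~\ref{lma.f6}, \ref{CCCc}) when verifying conditions (b) and (c).
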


Finally, we consider the wave operators.
Recall that  the symbols $ v_{\pm}  $ were defined by equalities \eqref{H4},  \eqref{eq:om} and   \eqref{eq:pm}. The matrix symbol $ \Sigma^{(0)}_{\varphi}$ was defined by formula \eqref{eq:CF6}  and the   model operator $  H_{\varphi , \theta }^{(0)} $ was defined by  formulas   \eqref{eq:CF3},  \eqref{eq:conf} and \eqref{eq:conf7}.

 \begin{theorem}\label{thm.g1}
Let Assumption~$\ref{assuH}$ hold with $\beta_{0}> 1$ and $H=H(\omega)$.  Let the numbers $\kappa_{\pm}$, $\kappa_{j}$ and $\psi_{j}$ be defined by formula  \eqref{eq:HO2}. Put $a_{j}=e^{i\theta_{j}}$ and $\varphi_{j}= \psi_{j}+ \theta_{j}-\pi/2$.
Then all the assertions of Theorem~$\ref{ScTh}$ are true with  $N=N_{0}+2$ for  the operators  $H$  and $H_{j} =     \kappa_{j}  H_{\varphi_{j}, \theta_{j}}^{(0)}$    if 
 $j=1,\ldots, N_{0}$ and $H_{N_{0}+1}=     \kappa_{+} H(v_+)$, $H_{N_{0}+2}=     \kappa_{-} H(v_-)$.  
 \end{theorem}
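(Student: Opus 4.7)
The plan is to deduce Theorem~\ref{thm.g1} directly from Theorem~\ref{ScTh}, by verifying Assumption~\ref{assu} for the family of model operators $H_1,\ldots,H_{N_0+2}$ listed in the statement, with a multiplication weight $Q$ that is singular at every jump point of $\omega$. Fix an exponent $\beta\in(1/2,\beta_0)$ and let $Q$ be the operator of multiplication on $L^2(\bbT)$ by
\[
q(\mu)=q_1(\mu)^\beta\,q_{-1}(\mu)^\beta\prod_{j=1}^{N_0}\bigl(q_{a_j}(\mu)\,q_{\bar a_j}(\mu)\bigr)^\beta,
\]
where $q_a$ is defined in \eqref{eq:Xs}. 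Then $q$ vanishes logarithmically exactly on the set of discontinuities of $\omega$ and is bounded below away from it, so $Q$ has trivial kernel and dense range, as required.

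Condition~(a) of Assumption~\ref{assu} follows from Theorems~\ref{sm} and~\ref{sm1}. Since $Q$ factors as a bounded multiplication operator times $Q_{\pm1}^\beta$, respectively times $Q_{a_j}^\beta Q_{\bar a_j}^\beta$, and $H$-smoothness is preserved by left multiplication with a bounded operator, $Q$ is strongly $H(v_\pm)$-smooth on $(0,1)$ and strongly $H_{\varphi_j,\theta_j}^{(0)}$-smooth on $(-1,0)\cup(0,1)$ with the same exponent $\gamma\in(1/2,\beta-1/2)$. A linear rescaling by $\kappa_\pm,\kappa_j$ transports these statements to smoothness on the spectral intervals $\Delta_\pm$ and $\Delta_j$ of the model operators appearing in the theorem. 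The choice of phases $\varphi_j=\psi_j+\theta_j-\pi/2$, together with formulas \eqref{H4}--\eqref{eq:pm}, \eqref{eq:si} and \eqref{eq:conf4}, is calibrated so that the jumps of $\kappa_+v_+$, $\kappa_-v_-$ and of the symbols $\omega_{\varphi_j,\theta_j}$ of the complex models reproduce exactly the jumps \eqref{eq:HO2} of $\omega$ at $\pm1$, $a_j$ and $\bar a_j$ (recall that, by \eqref{eq:conf6}--\eqref{eq:conf7B}, $H_{\varphi_j,\theta_j}^{(0)}$ differs from $H(\omega_{\varphi_j,\theta_j})$ by the Lipschitz piece $\wt H_{\varphi_j,\theta_j}$ that contributes no jump).

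It follows that the symbol of $\wt H:=H-\sum_n H_n$ is \emph{continuous} on $\bbT$ and still satisfies \eqref{eq:XX3} with exponent $\beta_0>1>\beta$. The weighted compactness results established in \cite{PY1}, applied with the logarithmic weight $q$, then provide the sandwich representation $\wt H=Q\wt K Q$ with $\wt K$ compact, verifying condition~(b). For condition~(c), one rewrites each cross product as $H_nH_m=P_+\Omega_n P_-\Omega_m^{\,*}P_+$ (using $JP_+J=P_-$) and exploits that the singular sets of $\Omega_n$ and $\Omega_m$ are disjoint: multiplication by $q$ on both sides simultaneously suppresses the singularities of both factors, and a further appeal to the compactness estimates of \cite{PY1} yields a compact middle factor $K_{n,m}$. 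Finally, condition~(d), the boundedness of $QH_nQ^{-1}$ on $R(Q)$, reduces to the statement that $[Q,H_n]Q^{-1}$ is bounded, which holds because $q$ is smooth away from its zeros and at those zeros its rate of vanishing matches, up to the exponent $\beta<\beta_0$, the logarithmic continuity of the symbol of $H_n$ at the same points.

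The principal obstacle is conditions~(b) and~(c): the fact that Hankel operators sandwiched by the logarithmic weights $Q$ retain compactness whenever the underlying symbol is merely continuous with modulus of continuity of order $\bigl|\ln|\mu-a|\bigr|^{-\beta_0}$. This is exactly the technical input imported from \cite{PY1} and signalled in Sect.~1.3 of the present paper. Once Assumption~\ref{assu} is in place, Theorem~\ref{ScTh} delivers all three conclusions of Theorem~\ref{thm.g1}: existence, intertwining, mutual orthogonality of ranges, and asymptotic completeness of the wave operators $W_\pm(H,H_n)$.
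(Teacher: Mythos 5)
Your proposal follows the same route as the paper: verify Assumption~\ref{assu} for the logarithmic weight \eqref{eq:f6H}, then invoke Theorem~\ref{ScTh}. However, two concrete points need repair. First, the exponent range. You take $\beta\in(1/2,\beta_0)$, but the compactness result you need (Lemma~\ref{CCCc}) requires $\beta_0>2\beta$, because the weight $Q^{-1}$ sits on \emph{both} sides of the sandwiched operator; the correct choice is $\beta\in(1/2,\beta_0/2)$, an interval which is nonempty precisely because $\beta_0>1$ --- this is where the hypothesis on $\beta_0$ actually enters. Your inequality ``$\beta_0>1>\beta$'' is neither guaranteed by your choice of $\beta$ nor the relevant condition, and your smoothness exponent ``$\gamma\in(1/2,\beta-1/2)$'' would force $\beta>1$ (hence $\beta_0>2$); for Theorem~\ref{ScTh} any $\gamma\in(0,1]$ suffices, so one should only claim $\gamma<\beta-1/2$.

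Second, your verification of condition (c) writes every cross product as $H_nH_m=P_+\Omega_nP_-\Omega_m^*P_+$, which presumes each $H_n$ is a Hankel operator with a scalar symbol. But the complex models $H_j=\kappa_jH^{(0)}_{\varphi_j,\theta_j}$ are \emph{not} Hankel operators in ${\Bbb H}^2_+({\Bbb T})$ when $\sin\varphi_j\neq0$. One must first substitute $H_j=H(\omega_j)-\kappa_j\wt H_{\varphi_j,\theta_j}$ from \eqref{eq:conf6}; the resulting Hankel-times-Hankel terms are handled by the disjoint-singularity lemma (Lemma~\ref{cross}), while the terms containing $\wt H_{\varphi_j,\theta_j}$ need the extra factorization $Q^{-1}H(v_\pm)\wt H_{\varphi_j,\theta_j}Q^{-1}=\bigl(Q^{-1}H(v_\pm)Q\bigr)\bigl(Q^{-1}\wt H_{\varphi_j,\theta_j}Q^{-1}\bigr)$, a bounded operator times a compact one. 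The boundedness of the first factor is exactly the Muckenhoupt input (Lemma~\ref{lma.f6}), which is also the genuine content of condition (d): your commutator heuristic about ``matching moduli of continuity'' is not a proof, since $QH(\omega)Q^{-1}$ is bounded for \emph{every} bounded symbol by the weighted norm inequality, irrespective of any continuity. All of these defects are repairable with ingredients you already cite, so the architecture of your argument is sound and coincides with the paper's.
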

 
In particular cases where a symbol $\omega$ has only one real singularity or only one pair of complex singularities, we have the following results.
 
 \begin{corollary}\label{cor1}
Suppose that a symbol $\omega(\mu)$ has only one jump $2 i\kappa_{+}$ at the point $  1$ or  $2 i\kappa_{-}$ at  $ - 1$. Then  the corresponding wave operators $W_{\tau}( H(\omega),    \kappa_{+} H(v_+))$ or $W_{\tau}( H(\omega),    \kappa_{-} H(v_-))$    $($for both signs $``\tau=\pm")$ exist and are complete.  
 \end{corollary}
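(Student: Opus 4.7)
The plan is to deduce the corollary as a direct specialization of Theorem~\ref{thm.g1} to the case $N_0=0$ with exactly one of $\kappa_\pm$ nonzero. In this setting the multichannel sum in Theorem~\ref{thm.g1} collapses to a single channel, so the general multichannel conclusions of Theorem~\ref{ScTh} simplify considerably and yield exactly the existence and completeness asserted.

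Concretely, I would proceed as follows. Suppose the only jump of $\omega$ occurs at $+1$, with jump $2i\kappa_+$ (the case of a jump at $-1$ is identical). Then Assumption~\ref{assuH} holds with $N_0=0$, $\kappa_-=0$, and $\beta_0>1$. Apply Theorem~\ref{thm.g1} with $N=1$ and the single model operator $H_1=\kappa_+ H(v_+)$. By part $1^0$ of Theorem~\ref{ScTh}, the wave operators $W_\pm(H(\omega),\kappa_+ H(v_+))$ exist. By part $3^0$ of Theorem~\ref{ScTh}, their ranges satisfy
\[
\Ran W_\pm(H(\omega),\kappa_+ H(v_+))=\calH^{\rm(ac)}(H(\omega)),
\]
since the asymptotic completeness is a single-term identity when there is only one channel. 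By the remarks in subsection~2.2, this coincidence of the range with $\calH^{\rm(ac)}(H(\omega))$ is precisely the completeness of $W_\pm(H(\omega),\kappa_+ H(v_+))$, equivalent to the existence of the inverse wave operators $W_\pm(\kappa_+ H(v_+),H(\omega))=W_\pm^*(H(\omega),\kappa_+ H(v_+))$.

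The case of a single jump at $-1$ is treated identically with $v_+$ replaced by $v_-$ and $\kappa_+$ by $\kappa_-$. No additional work is needed beyond invoking Theorem~\ref{thm.g1}; the whole point of the corollary is that it records what the general multichannel machinery says in the simplest possible case. Since no real obstacle arises and no new construction is required, this proof is essentially a one-line citation of Theorem~\ref{thm.g1} together with the elementary fact from subsection~2.2 that completeness of a wave operator is equivalent to its range being the a.c.\ subspace of the target Hamiltonian.
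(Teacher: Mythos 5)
Your proposal is correct and is exactly the argument the paper intends: the corollary is stated immediately after Theorem~\ref{thm.g1} as its specialization to the case of a single real jump, where the multichannel conclusions of Theorem~\ref{ScTh} reduce to a single channel and asymptotic completeness becomes the statement that the range of the one wave operator is all of $\calH^{\rm(ac)}(H(\omega))$, i.e.\ completeness in the sense of subsection~2.2. The only cosmetic difference is that the paper formally keeps $N=N_0+2$ channels and lets the vanishing jump give a degenerate (empty) channel, whereas you drop it and set $N=1$; this changes nothing.
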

 
 \begin{corollary}\label{cor2}
Suppose that a symbol $\omega(\mu)$ has only two jumps $2 \kappa e^{i\psi}$ and $- 2 \kappa e^{-i\psi}$,
$\kappa>0$, 
at  points $e^{i\theta}$ and $e^{-i\theta}$, respectively. Put $\varphi = \psi + \theta -\pi/2$. Then  the wave operators $W_{\pm}(H(\omega),   \kappa H_{\varphi, \theta }^{(0)}) $   exist and are complete.  
 \end{corollary}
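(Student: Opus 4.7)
The plan is to deduce Corollary~\ref{cor2} directly from Theorem~\ref{thm.g1} by choosing the parameters so that only a single channel of scattering is present. Under the hypothesis, the symbol $\omega$ has exactly one pair of complex conjugate jumps at $e^{\pm i\theta}$ with amplitude $\kappa$ and phase $\psi$, and no real jumps. Thus in the notation of Theorem~\ref{thm.g1} we take $N_0=1$, $a_1=e^{i\theta}$, $\kappa_1=\kappa$, $\psi_1=\psi$, and $\kappa_+=\kappa_-=0$. The associated angle is $\varphi_1=\psi_1+\theta_1-\pi/2=\varphi$, so the complex-pair model operator is $H_1=\kappa H_{\varphi,\theta}^{(0)}$, while $H_{N_0+1}=\kappa_+ H(v_+)=0$ and $H_{N_0+2}=\kappa_- H(v_-)=0$. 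Assumption~\ref{assuH} is inherited from the hypothesis with $\beta_0>1$, since continuity away from $e^{\pm i\theta}$ and the one-sided regularity at those two points are the only nontrivial requirements.

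With these identifications the apparatus of Theorem~\ref{thm.g1} applies. From clause $1^0$ of Theorem~\ref{ScTh} we obtain the existence of all three wave operator pairs $W_\pm(H(\omega),H_j)$, $j=1,2,3$. The two ``trivial'' pairs, corresponding to $H_2=H_3=0$, have vanishing range because $\mathcal{H}^{(\rm ac)}(0)=\{0\}$. Clause $3^0$ (asymptotic completeness) then reads
\[
\Ran W_\pm\bigl(H(\omega),\kappa H_{\varphi,\theta}^{(0)}\bigr) \oplus \{0\} \oplus \{0\} \;=\; \mathcal{H}^{(\rm ac)}(H(\omega)),
\]
so the single remaining wave operator is isometric and its range coincides with $\mathcal{H}^{(\rm ac)}(H(\omega))$. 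By the general fact recalled in subs.~2.2, completeness in this sense is equivalent to the existence of the reverse wave operators $W_\pm(\kappa H_{\varphi,\theta}^{(0)},H(\omega))$, yielding exactly the statement of the corollary.

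No substantive analytic work is required: the corollary is pure bookkeeping on top of Theorem~\ref{thm.g1}. The only step needing care is to match the phase conventions, namely $\varphi=\psi+\theta-\pi/2$ (which is precisely the definition of $\varphi_j$ in Theorem~\ref{thm.g1}) and to verify that the zero model operators contribute a trivial summand to the asymptotic completeness decomposition, which is immediate from the definition of $\mathcal{H}^{(\rm ac)}$.
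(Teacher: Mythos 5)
Your proposal is correct and follows exactly the route the paper intends: Corollary~\ref{cor2} is stated without separate proof as the specialization of Theorem~\ref{thm.g1} (hence of Theorem~\ref{ScTh}) to $N_0=1$, $a_1=e^{i\theta}$, $\kappa_1=\kappa$, $\psi_1=\psi$ and $\kappa_\pm=0$, so that the single surviving wave operator has range equal to $\mathcal H^{\rm(ac)}(H(\omega))$. The only cosmetic remark is that rather than carrying two zero model operators through Assumption~\ref{assu}, it is cleaner to simply take $N=1$ and omit the channels corresponding to the regular points $\pm1$, as the paper does implicitly when it allows $\kappa_\pm=0$.
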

 
Observe that $H(\Sigma_{\varphi, \theta}^{(0)})$ is not (if $\sin\varphi\neq 0$) a Hankel operator in the space ${\Bbb H}^2_{+}(\Bbb T)$. It is however possible to reformulate Theorem~\ref{thm.g1} solely in terms of Hankel operators. As usual, the symbols below satisfy condition \eqref{eq:HA2}.
 
  \begin{theorem}\label{G}
Let Assumption~$\ref{assuH}$ hold with $\beta_{0} >1$ and $H=H(\omega)$. Let $\varkappa (\pm 1)$, $\varkappa (a_{j})$, $\Im a_{j}>0$,  be the jumps of $\omega(\mu)$.
 Suppose that symbols $\omega_{\pm}$  satisfy Assumption~$\ref{assuH}$ and that their only jumps $\varkappa (\pm 1)$ are located at the points $\pm 1$. Suppose also that symbols  $\omega_{j}$, $j=1,\ldots, N_{0}$, satisfy Assumption~$\ref{assuH}$ and that their only jumps $\varkappa (a_{j})$  in the upper half-plane  are located at the points $a_{j}$. 
Then
 all the assertions of Theorem~$\ref{ScTh}$ with  $N=N_{0}+2$  are true   for the operators   $H_{j}=     H(\omega_{j})$, $j=1,\ldots, N_{0}$, $H_{N_{0}+1}=    H (\omega_{+})$, $H_{N_{0}+2}=    H(\omega_{-})$.  
 \end{theorem}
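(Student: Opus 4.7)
The strategy is to deduce Theorem \ref{G} directly from Theorem \ref{thm.g1} via the multiplication theorem \eqref{eq:mt} for wave operators. The key point is that each symbol $\omega_\pm$ (resp.\ $\omega_j$) is designed to have exactly the same jump data at $\pm 1$ (resp.\ at $(a_j,\bar a_j)$) as $\omega$, and no other jumps; thus $H(\omega_\pm)$ and $H(\omega_j)$ should be interchangeable with the abstract model operators $\kappa_\pm H(v_\pm)$ and $\kappa_j H_{\varphi_j,\theta_j}^{(0)}$ used in Theorem \ref{thm.g1}.

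First, I would apply Corollary \ref{cor1} to the symbol $\omega_\pm$: since it satisfies Assumption \ref{assuH} with its unique jump $2i\kappa_\pm$ at $\pm 1$, the wave operators $W_\pm(H(\omega_\pm), \kappa_\pm H(v_\pm))$ exist and are complete. Likewise, Corollary \ref{cor2} applied to $\omega_j$ yields that $W_\pm(H(\omega_j),\kappa_j H_{\varphi_j,\theta_j}^{(0)})$ exist and are complete, with $\varphi_j=\psi_j+\theta_j-\pi/2$ (here condition \eqref{eq:HA2} ensures that the only jumps of $\omega_j$ lie at $a_j$ and $\bar a_j$, so Corollary \ref{cor2} does apply). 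Completeness is crucial, because it implies the existence of the inverse wave operators $W_\pm(\kappa_\pm H(v_\pm),H(\omega_\pm))$ and $W_\pm(\kappa_j H_{\varphi_j,\theta_j}^{(0)}, H(\omega_j))$.

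On the other hand, Theorem \ref{thm.g1} already gives existence of the wave operators $W_\pm(H,\kappa_\pm H(v_\pm))$ and $W_\pm(H, \kappa_j H_{\varphi_j,\theta_j}^{(0)})$, together with their isometry, the orthogonality of their ranges, and the asymptotic completeness identity. Applying the multiplication theorem \eqref{eq:mt}, the wave operators $W_\pm(H, H(\omega_\pm))$ and $W_\pm(H, H(\omega_j))$ exist and factor as
\[
W_\pm(H, H(\omega_j)) = W_\pm(H, \kappa_j H_{\varphi_j,\theta_j}^{(0)})\, W_\pm(\kappa_j H_{\varphi_j,\theta_j}^{(0)}, H(\omega_j)),
\]
and analogously for $H(\omega_\pm)$. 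The right-hand factor is a unitary between the a.c.\ subspaces of $H(\omega_j)$ and $\kappa_j H_{\varphi_j,\theta_j}^{(0)}$, so the range of the composed operator on the left coincides with the range of $W_\pm(H,\kappa_j H_{\varphi_j,\theta_j}^{(0)})$. Consequently, orthogonality of ranges for distinct channels and the asymptotic completeness statement transfer verbatim from Theorem \ref{thm.g1}. Isometry and the intertwining relation are then automatic from the product structure and the corresponding properties of the individual factors.

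I do not expect a serious obstacle in this reduction, as the heavy analytic lifting has already been carried out in Theorem \ref{thm.g1} and Corollaries \ref{cor1}, \ref{cor2}. The only conceptual point to verify is the compatibility of the two chains of wave operators, i.e.\ that subtracting matched jumps produces no ``leakage'' between channels; but this is precisely the content of the corollaries, which guarantee that the only a.c.\ channel of $H(\omega_j)$ (resp.\ $H(\omega_\pm)$) is the one associated with its jump data, which is shared with $\omega$.
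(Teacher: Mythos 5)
Your argument is correct and is essentially identical to the paper's own proof: both deduce the result from Theorem~\ref{thm.g1} together with Corollaries~\ref{cor1} and \ref{cor2} via the multiplication theorem \eqref{eq:mt}, using completeness to invert the wave operators for the pairs $(H(\omega_\pm),\kappa_\pm H(v_\pm))$ and $(H(\omega_j),\kappa_j H^{(0)}_{\varphi_j,\theta_j})$ and then observing that the ranges of the composed wave operators coincide with those in Theorem~\ref{thm.g1}, so orthogonality and asymptotic completeness transfer unchanged.
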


  \begin{proof}
  Let the index $\tau$ below take both values $``+"$ and $``-"$.
  It follows from Corollary~\ref{cor1} that the wave operators $W_{\tau}(H(\omega_{\pm}),  \kappa_{\pm} H(v_\pm))$   exist and are complete. Therefore, by the multiplication theorem for wave operators (see relation \eqref{eq:mt}),   the wave operators   $W_{\tau}(H,H(\omega_\pm) )$ also exist and
\begin{equation}
  W_{\tau}(H,H (\omega_\pm) )= W_{\tau}(H,  \kappa_{\pm} H(v_{\pm}) )W_{\tau}^*(  H(\omega_\pm),  \kappa_{\pm} H(v_{\pm}) ).
\label{eq:mt1}\end{equation}
   Similarly,   it follows from Corollary~\ref{cor2} that the wave operators $W_\tau (H(\omega_j),  \kappa_{j} H_{\varphi_{j} , \theta_{j} }^{(0)})$   exist and are complete. Therefore, by the multiplication theorem, the wave operators   $W_\tau(H,H(\omega_j) )$ also exist and
  \begin{equation}
  W_\tau(H,H(\omega_{j}) )= W_\tau(H, \kappa_{j}   H_{\varphi_{j} , \theta_{j} }^{(0)} )W_\tau^*(  H(\omega_{j}),   \kappa_{j} H_{\varphi_{j} , \theta_{j} }^{(0)}) .
  \label{eq:mt2}\end{equation}
Relations \eqref{eq:mt1} and \eqref{eq:mt2} imply that
\[
R\big(W_{\tau}(H,H(\omega_\pm) )\big)= R\big(W_{\tau}(H,  \kappa_{\pm} H(v_{\pm}) )\big)
\]
and
\[
 R\big( W_\tau(H,H(\omega_{j}) )\big)= R\big(W_\tau(H, \kappa_{j}   H_{\varphi_{j} , \theta_{j} }^{(0)} )\big).
\]
Therefore in the statement of Theorem~\ref{thm.g1},  the wave operators $W_{\tau}( H,  \kappa_{\pm} H(v_{\pm}) )$   can be replaced by $ W_{\tau}(H, H (\omega_{\pm}) )$, and
  the wave operators $W_\tau (H,  \kappa_{j}  H_{\varphi_{j} , \theta_{j} }^{(0)} )$ can be replaced by $ W_\tau (H,H(\omega_{j}) )$ .   
   \end{proof}
   
   Recall that the functions $\omega_{\varphi,\theta}$ defined by formulas  \eqref{eq:si},  \eqref{eq:conf4}  are smooth away from the points $e^{i\theta}$ and $e^{- i\theta}$ and have the jumps $2i e^{i(\varphi-\theta)}$ and $2i e^{i(\theta-\varphi)}$ at these points. Therefore for the symbol $\omega_{j}$ in  Theorem~\ref{G}, we can, for example, choose   the function
 \begin{equation}
 \omega_j(\mu)=   \kappa_{j}  \omega_{\varphi_{j},\theta_{j}}(\mu)
\label{eq:si1}\end{equation}
where    $a_{j} = e^{i \theta_{j}}$, $\varphi_{j} = \psi_{j} + \theta_{j} -\pi/2$. Similarly, since the functions $v_{\pm} (\mu)$ defined by formulas   \eqref{H4},  \eqref{eq:om}  and   \eqref{eq:pm} are smooth away from the points $\pm 1$ where they have the jump $2i$, we can set $\omega_{\pm} (\mu)= \kappa_{\pm} v_{\pm} (\mu)$. 

 It follows from Theorem~\ref{G} that, for all $f\in {\cal H}^{\rm(ac)}(H)$, the relation
 \[
 e^{-iHt}f= \sum_{j=1}^N  e^{-i H_{j}t}f_{j}^{(\pm)} +\varepsilon^{(\pm)} (t)
 \]
 holds with $f_{j}^{(\pm)}  =W_{\pm} (H,H_{j})^* f$ and $\varepsilon^{(\pm)} (t)\to 0$ as $t\to \pm \infty$. In particular, this relation shows that asymptotically the functions $( e^{-i Ht}f) (\mu)$ are concentrated for large $|t|$ in neighborhoods of singular points of the symbol $\omega(\mu)$. In a somewhat more simple situation,  this phenomenon was discussed in a detailed way in \cite{Y2}.
 


  \medskip

 {\bf 6.2.}
 In addition to the results of Sections~4 and 5 on model operators, 
  for the proof of   Theorems~\ref{thm.g1a}, \ref{thm.g1b} and \ref{thm.g1} we also need the results of \cite{PY1}, Section~4,   on the boundedness and compactness of Hankel operators sandwiched by singular weights. These results will be stated in this subsection. We emphasize that now $H(\omega) = P_{+} \Omega J P_{+}$ are considered as  operators in the space $L^2 ({\Bbb T})$.
Condition \eqref{eq:HA2} is supposed to be satisfied.

Recall that the function  $q_{a}  (\mu)$  is defined by equality \eqref{eq:Xs}. Let    $Q$ be
the operator of multiplication by  the   function  
\begin{equation}
q(\mu)
=  \Big(q_{1}  (\mu)  q_{-1}  (\mu)
\prod_{j=1}^N q_{a_{j}}  (\mu) q_{\bar{a}_{j}}  (\mu)\Big)^\beta,\q \beta>0, 
\label{eq:f6H}
\end{equation}
vanishing in all singular points of $\omega$. Of course $Q=Q^*$ and its kernel is trivial.

 The first assertion follows from the classical Muckenhoupt result \cite{Muck}.

\begin{lemma}\label{lma.f6}
Suppose   that   $\omega\in L^\infty(\Bbb T)$. 
Then the operators $Q  P_{+} \omega P_{-}Q^{-1}$ and hence $Q H(\omega) Q^{-1}$  
are bounded. 
\end{lemma}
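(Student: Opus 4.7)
The plan is to reduce the assertion to the classical Muckenhoupt $A_2$-characterization of weighted boundedness of the Riesz projection. Since $Q$ is a multiplication operator on $L^2(\bbT)$, it commutes with multiplication by $\omega$, so on the dense subspace $R(Q)$ we have
\[
QP_+\omega P_- Q^{-1}=(QP_+Q^{-1})\,\omega\,(QP_- Q^{-1})=(QP_+Q^{-1})\,\omega\,\bigl(I-QP_+Q^{-1}\bigr).
\]
Since $\omega\in L^\infty(\bbT)$ is a bounded multiplier, it suffices to establish the boundedness of the single operator $QP_+Q^{-1}$ on $L^2(\bbT)$. Once this is done, the corresponding statement for $QH(\omega)Q^{-1}$ follows in the same way, since $H(\omega)=P_+\Omega JP_+$ with $\Omega$ the multiplication by the bounded function $\mu\omega(\mu)$, and $Q$ commutes with the involution $J$: the weight $q$ in \eqref{eq:f6H} satisfies $q(\bar\mu)=q(\mu)$ because the singular points of $\omega$ come in complex conjugate pairs (while $1$ and $-1$ are real).

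The next step is to invoke Muckenhoupt's classical theorem: the Riesz projection $P_+$ is bounded on the weighted space $L^2(\bbT,\,w\,dm)$ if and only if $w$ belongs to the Muckenhoupt class $A_2$, i.e.
\[
[w]_{A_2}:=\sup_I\Bigl(\frac{1}{m(I)}\int_I w\,dm\Bigr)\Bigl(\frac{1}{m(I)}\int_I w^{-1}\,dm\Bigr)<\infty,
\]
the supremum taken over all arcs $I\subset\bbT$. Applied to $w=q^{-2}$, this turns the boundedness of $QP_+Q^{-1}$ on $L^2(\bbT)$ into the single statement $q^2\in A_2(\bbT)$.

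It remains to verify the $A_2$-condition for the product weight \eqref{eq:f6H}. Away from the finite singular set $S=\{1,-1,a_1,\bar a_1,\ldots,a_N,\bar a_N\}$, both $q^2$ and $q^{-2}$ are bounded above and below, so only arcs $I$ concentrated near some $a\in S$ can contribute. For such an arc, every factor in \eqref{eq:f6H} other than $q_a^{2\beta}$ is bounded above and away from zero on $I$, so the problem reduces to checking $A_2$ for the model logarithmic weight $q_a(\mu)^{2\beta}=\bigl|\ln|\mu-a|\bigr|^{-2\beta}$ near $a$. A direct computation gives, for an arc $I$ of length $r$ near $a$, the estimates $\int_I q_a^{2\beta}\,dm\sim r\,|\ln r|^{-2\beta}$ and $\int_I q_a^{-2\beta}\,dm\sim r\,|\ln r|^{2\beta}$, so the product of the two averages stays bounded as $r\to 0$; the slow variation of logarithms handles also arcs not centered at $a$. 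This verification is the main (and essentially only) technical point of the proof, but it is standard: the singularities in $q$ are purely logarithmic and hence much milder than the borderline power singularities $|\mu-a|^{\pm 1}$ which just fail $A_2$. Thus $q^2\in A_2(\bbT)$ and the lemma follows.
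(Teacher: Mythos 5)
Your proof is correct and takes essentially the same route as the paper, whose entire argument for this lemma is the one-line appeal to Muckenhoupt's weighted-norm theorem (with details deferred to [PY1], Section 4); you have merely filled in the reduction to the single operator $QP_{+}Q^{-1}$, the symmetry $q(\bar\mu)=q(\mu)$ needed to pass to $QH(\omega)Q^{-1}$, and the verification that the logarithmic weight lies in $A_2$. The only blemish is the harmless slip where you write ``applied to $w=q^{-2}$'' before concluding $q^2\in A_2(\bbT)$: the relevant weight is $w=q^2$, but since the $A_2$ condition is invariant under $w\mapsto w^{-1}$ this changes nothing.
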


The next statement generalizes the well known result about the compactness of Hankel operators $H(\omega)$ with $\omega\in C(\Bbb T) $.

 \begin{lemma}\label{CCCc}
Let   $\omega\in C(\Bbb T) $, and let condition \eqref{eq:XX3} with some $\beta_{0} >  2 \beta$ be satisfied in all singular points of $\omega$.
Then the operators $Q^{-1}  P_{+} \omega P_{-}Q^{-1}$ and hence $Q^{-1} H(\omega )  Q^{-1}$  
   are compact.
\end{lemma}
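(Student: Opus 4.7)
The plan is to split $\omega$ into a smooth piece (handled by a Hilbert--Schmidt argument) and a remainder that vanishes at the singular points of $q$ fast enough to be controlled by a weighted supremum norm $\|\cdot/q^2\|_{L^\infty}$, together with a uniform operator-norm bound that is the core analytic input.

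First I would pick $C^\infty$ cutoffs $\chi_a$ at each singular point $a\in\{1,-1,a_j,\bar a_j\}$ with $\chi_a(a)=1$ and pairwise disjoint supports, and split $\omega=\omega_{\mathrm{sk}}+r$ with $\omega_{\mathrm{sk}}:=\sum_a \omega(a)\chi_a$. The skeleton $\omega_{\mathrm{sk}}\in C^\infty(\Bbb T)$, so the matrix of $P_+\omega_{\mathrm{sk}}P_-$ has rapidly decaying entries and $H(\omega_{\mathrm{sk}})$ is trace class; combined with $q^{-1}\in\bigcap_{p<\infty}L^p(\Bbb T)$, this renders $Q^{-1}H(\omega_{\mathrm{sk}})Q^{-1}$ compact (in fact Hilbert--Schmidt via an $L^2$-kernel estimate). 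The remainder $r\in C(\Bbb T)$ vanishes at each singular point and, by \eqref{eq:XX3}, satisfies $|r(\mu)|=O\big(\big|\ln|\mu-a|\big|^{-\beta_0}\big)$ near each $a$. Since $q(\mu)\sim\big|\ln|\mu-a|\big|^{-\beta}$ near $a$ and $\beta_0>2\beta$, the function $r/q^2$ is continuous on $\Bbb T$ and vanishes at every singular point. I would then approximate $r/q^2$ uniformly by trigonometric polynomials $p_n$ vanishing at the singular points, take $r_n$ to be a smooth modification of $q^2p_n$ (still vanishing at the singular points), and arrange $\|(r-r_n)/q^2\|_{L^\infty}\to 0$ with each $Q^{-1}H(r_n)Q^{-1}$ compact by the skeleton argument applied to the smooth $r_n$.

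The proof would then be completed by the uniform operator-norm bound
\[
\|Q^{-1}P_+\rho P_-Q^{-1}\|_{L^2\to L^2}\leq C\,\|\rho/q^2\|_{L^\infty(\Bbb T)}
\]
valid for symbols $\rho$ vanishing at the singular points; applied to $\rho=r-r_n$ it yields $\|Q^{-1}H(r-r_n)Q^{-1}\|\to 0$ and hence compactness of $Q^{-1}H(r)Q^{-1}$ by norm approximation. The bound itself I would obtain by expanding the sesquilinear form
\[
\langle Q^{-1}P_+\rho P_-Q^{-1}f,g\rangle=\int \rho\,(P_-Q^{-1}f)\,\overline{(P_+Q^{-1}g)}\,dm,
\]
writing $\rho=q^2\tilde\rho$ with $\tilde\rho\in L^\infty$, applying Cauchy--Schwarz to pair each factor of $q$ with one of $P_\pm Q^{-1}$, and invoking the Muckenhoupt boundedness of the weighted Riesz projections $QP_\pm Q^{-1}$, which is exactly the ingredient already used for Lemma~\ref{lma.f6}.

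I expect the main obstacle to be the construction of the approximants $r_n$: they must be simultaneously smooth enough for the skeleton argument to deliver compactness of $Q^{-1}H(r_n)Q^{-1}$, and close enough to $r$ in the $\|\cdot/q^2\|_{L^\infty}$ norm to drive the norm approximation step. This requires a careful modulus-of-continuity argument for the log-H\"older function $r/q^2$ (e.g.\ via Fej\'er means controlled by the log-H\"older modulus), together with a choice of smoothing that preserves the vanishing at the singular points so that the $q^2$ factor in the reconstruction $r_n\approx q^2p_n$ does not reintroduce singular behaviour.
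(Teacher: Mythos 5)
You should first be aware that the paper contains no proof of this lemma to compare against: it is imported from Section~4 of \cite{PY1} (see the opening of subs.~6.2, ``We borrow these results from our previous publication''). Judged on its own merits, your architecture is correct and its two analytic pillars hold up. The weighted bound
\[
\norm{Q^{-1}P_+\rho P_-Q^{-1}}\;\le\;\norm{QP_+Q^{-1}}\,\norm{QP_-Q^{-1}}\,\norm{\rho/q^2}_{L^\infty}
\]
does follow from the form identity and Cauchy--Schwarz exactly as you describe, because $q^2$ is a Muckenhoupt $A_2$ weight; this is the same ingredient behind Lemma~\ref{lma.f6}, which factors as $(QP_+Q^{-1})\,\omega\,(QP_-Q^{-1})$. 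Likewise, for smooth $\sigma$ the kernel of $P_+\sigma P_-$ is bounded by $\sum_{k\ge1}k\,\lvert\hat\sigma_k\rvert$, so $Q^{-1}P_+\sigma P_-Q^{-1}$ is Hilbert--Schmidt since $q^{-1}\in L^2({\Bbb T})$; and $r/q^2$ is indeed continuous and vanishes at the singular points precisely because $\beta_0>2\beta$.

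The one step that fails as written is the construction of the approximants $r_n$ as ``a smooth modification of $q^2p_n$''. The function $q^2p_n$ is not smooth at the singular points (the derivative of $q_a$ blows up like $\lvert\mu-a\rvert^{-1}\bigl|\ln\lvert\mu-a\rvert\bigr|^{-2}$), and a smoothing that is merely uniformly close to it loses all control of the relevant norm: an $L^\infty$-perturbation of size $\varepsilon$ can have $\norm{\cdot/q^2}_{L^\infty}$ as large as $\varepsilon\sup q^{-2}=\infty$. The Fej\'er/modulus-of-continuity route you anticipate is therefore a dead end, but the repair is simple and dissolves your ``main obstacle'': take a smooth cutoff $\eta_\varepsilon$ vanishing on $\varepsilon/2$-neighbourhoods of the singular points and equal to $1$ outside the $\varepsilon$-neighbourhoods, and set $r_{\varepsilon,\delta}=\eta_\varepsilon\cdot(r*\phi_\delta)$ with $\phi_\delta$ a mollifier. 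Then $\norm{(r-r\eta_\varepsilon)/q^2}_{L^\infty}\to0$ as $\varepsilon\to0$ because $r/q^2$ is continuous and vanishes at the singular points, while for fixed $\varepsilon$ one has $\norm{\eta_\varepsilon(r-r*\phi_\delta)/q^2}_{L^\infty}\le\norm{\eta_\varepsilon q^{-2}}_{L^\infty}\norm{r-r*\phi_\delta}_{L^\infty}\to0$ as $\delta\to0$, since $q$ is bounded below on $\supp\eta_\varepsilon$. A diagonal choice then gives smooth $r_n$ with $\norm{(r-r_n)/q^2}_{L^\infty}\to0$, and your norm-approximation argument closes the proof.
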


 \begin{lemma}\label{cross}
Let the symbols $\omega_{j}$ for $j=1,\ldots, N_{0}$ be defined by formula \eqref{eq:si1}, $\omega_{N_{0}+1}=\kappa_{+} v_{+}$ and $\omega_{N_{0}+2}=\kappa_{-} v_{-}$. Then, for  
$n,m= 1,\ldots, N_{0}+2$ and $n\neq m$,  all operators
 $Q^{-1}  P_{+} \omega_{n} P_{-} \omega_{m} P_{+}Q^{-1}$ and hence $Q^{-1} H(\omega_{n}) H(\omega_{m}) Q^{-1}$  
   are compact.
\end{lemma}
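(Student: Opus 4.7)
The plan is to use the disjointness of the singular sets of $\omega_n$ and $\omega_m$ to reduce the problem to Lemmas~\ref{lma.f6} and~\ref{CCCc} together with a direct Hilbert--Schmidt estimate. Fix smooth cutoffs $\chi_n,\chi_m\in C^\infty({\Bbb T})$ with $\chi_n\chi_m\equiv 0$, $\chi_n\equiv 1$ in a neighbourhood of all singular points of $\omega_n$, and $\chi_m\equiv 1$ in a neighbourhood of all singular points of $\omega_m$. Split $\omega_n=\omega_n\chi_n+\omega_n(1-\chi_n)$ and $\omega_m=\omega_m\chi_m+\omega_m(1-\chi_m)$; both remainders lie in $C^\infty({\Bbb T})$, because the cutoff kills the relevant discontinuity and the symbols are smooth elsewhere. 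Expanding $P_+\omega_n P_-\omega_m P_+$ yields four terms.

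In each of the three terms containing a continuous factor $\omega_n(1-\chi_n)$ or $\omega_m(1-\chi_m)$, I would factor the sandwich $Q^{-1}(\cdots)Q^{-1}$ as a product of two operators by inserting $Q^{-1}Q$ between the two Riesz projections, arranging for the continuous symbol to sit inside a sandwich of the form $Q^{-1}P_{\pm}\omega' P_{\mp}Q^{-1}$. This factor is compact by Lemma~\ref{CCCc} (applied directly or via its adjoint), while the remaining factor is bounded thanks to Lemma~\ref{lma.f6} combined with the Muckenhoupt $A_2$ boundedness of $P_\pm$ on $L^2({\Bbb T},q^{\pm 2}\,dm)$. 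Since $q$ has only logarithmic singularities at finitely many points (see \eqref{eq:f6H}), one has $q^{\pm 2}\in A_2({\Bbb T})$, so such conjugations are indeed bounded.

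The only genuinely new term is
\[
\mathcal{T}=Q^{-1}P_+(\omega_n\chi_n)P_-(\omega_m\chi_m)P_+Q^{-1}.
\]
Using $\chi_n\chi_m\equiv 0$, one has the commutator identity
\[
(\omega_n\chi_n)P_-(\omega_m\chi_m) = \omega_n\,\chi_n[P_-,\chi_m]\,\omega_m = \omega_n\,(\chi_n P_- \chi_m)\,\omega_m ,
\]
which reduces $\mathcal{T}$ to $Q^{-1}P_+ \omega_n\,(\chi_n P_- \chi_m)\,\omega_m P_+Q^{-1}$. The operator $\chi_n P_-\chi_m$ has integral kernel $k(\mu,\nu)=\chi_n(\mu)p_-(\mu,\nu)\chi_m(\nu)$; since $\supp\chi_n$ and $\supp\chi_m$ are disjoint, $k$ avoids the diagonal of ${\Bbb T}\times{\Bbb T}$ and is therefore $C^\infty$. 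I would then write
\[
\mathcal{T}=(Q^{-1}P_+Q)\bigl(Q^{-1}\omega_n(\chi_n P_-\chi_m)\omega_m Q^{-1}\bigr)(QP_+Q^{-1}),
\]
where the outer factors are bounded by Muckenhoupt, and the middle factor is Hilbert--Schmidt because its kernel $q^{-1}(\mu)\omega_n(\mu)k(\mu,\nu)\omega_m(\nu)q^{-1}(\nu)$ lies in $L^2({\Bbb T}\times{\Bbb T})$: $k$ is bounded, $\omega_n,\omega_m\in L^\infty$, and $q^{-1}\in L^p({\Bbb T})$ for every finite $p$ thanks to the logarithmic nature of the singularities.

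I expect the main obstacle to be the ``singular $\times$ singular'' piece, where the unbounded weights $Q^{-1}$ must be pushed past the Riesz projections; the crucial point is that the singularities of $q$ in \eqref{eq:f6H} are mild enough (merely logarithmic) to ensure the $A_2$ condition used to bound $QP_\pm Q^{-1}$ and $Q^{-1}P_\pm Q$. Once that mechanism is in place, the other three terms follow by routine combination of Lemmas~\ref{lma.f6} and~\ref{CCCc}, and the compactness of $Q^{-1}H(\omega_n)H(\omega_m)Q^{-1}$ is an immediate consequence (cf.\ the convention of subs.~3.4).
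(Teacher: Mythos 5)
Your argument is correct; note, however, that the paper itself does not prove Lemma~\ref{cross} --- it is imported from \cite{PY1}, Section~4, with only the remark that the proof uses the disjointness of the singularities of $\omega_n$ and $\omega_m$. Your reconstruction is exactly in the spirit of that remark and uses precisely the two tools the paper puts on the table: the Muckenhoupt $A_2$ mechanism behind Lemma~\ref{lma.f6} (which, since the $A_2$ class is stable under $w\mapsto w^{-1}$ and the weights $q^{\pm 2}$ are products of logarithmic factors, gives boundedness of both $QP_\pm Q^{-1}$ and $Q^{-1}P_\pm Q$, as you need), and Lemma~\ref{CCCc} together with its adjoint for the three terms carrying a smooth factor. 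The ``singular $\times$ singular'' term is handled correctly: with $\chi_n\chi_m\equiv 0$ the identity $(\omega_n\chi_n)P_-(\omega_m\chi_m)=\omega_n(\chi_nP_-\chi_m)\omega_m$ is immediate, the kernel of $\chi_nP_-\chi_m$ is bounded because it lives off the diagonal, and $q^{-1}\in L^p({\Bbb T})$ for every finite $p$ makes the weighted middle factor Hilbert--Schmidt. Two small points worth making explicit in a write-up: (i) for Lemma~\ref{CCCc} what matters is that the smooth remainders $\omega_n(1-\chi_n)$ satisfy \eqref{eq:XX3} at \emph{every} zero of $q$, not only at their own former singularities --- which they do, being continuous there with any logarithmic modulus; (ii) the final clause about $Q^{-1}H(\omega_n)H(\omega_m)Q^{-1}$ uses $JP_+=P_-J$, which replaces the inner symbol $\omega_m$ by $\bar\mu\,\omega_m(\bar\mu)$; since each singular set is invariant under complex conjugation, the disjointness --- and hence your whole argument --- survives this replacement.
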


The proof of this result uses that the singularities of the symbols   $ \omega_{n}$ and $ \omega_{m}$ are disjoint if  $n\neq m$.

  \medskip

 {\bf 6.3.}
  For the proof of all Theorems~\ref{thm.g1a}, \ref{thm.g1b} and \ref{thm.g1}, we have to check Assumption~\ref{assu} with the operators  $H$, $H_n$, $n=1,\ldots, N=N_0 +2$, defined in Theorem~\ref{thm.g1} and  the operator  $\wt{H}$ defined by equality \eqref{eq:A}. For the smooth operator $Q$,  we   choose
the operator of multiplication by     function  \eqref{eq:f6H} where $\beta\in (1/2, \beta_{0}/2)$.  Since $\beta> 1/2$, 
Assumption~\ref{assu}a is satisfied with any $\gamma< \beta -1/2 $ according to Theorems~\ref{sm} and \ref{sm1}.

Let the jumps of $\omega(\mu)$ be given by formula \eqref{eq:HO2}. Define
    the functions $\omega_j $ for $j=1,\ldots, N_{0}$   by formula \eqref{eq:si1} and put $\omega_{N_{0}+1}=  \kappa_{+} v_{+}$,  $\omega_{N_{0}+2}=  \kappa_{-} v_{-}$.      Then the function
 \begin{equation}
 \ti{\omega}  (\mu)=  \omega (\mu) 
 -\sum_{n =1}^{N_{0}+2}  \omega_n (\mu) 
\label{eq:KK}\end{equation}
 has  no  jumps so that
$ \ti{\omega}  \in C (\Bbb T)$. Moreover, it follows from condition \eqref{eq:XX3} that 
\[
 \ti{\omega}  (\mu)-  \ti{\omega}  (a)= O \big(  \big|\ln |\mu-a|  \big|^{-\beta_{0}}  \big),\q \mu\to a,
\]
for all singular points $ a=\pm 1, a_{j}, \bar{a}_{j}$. Therefore  the operator $Q^{-1} H (  \ti{\omega}  ) Q^{-1} $ is compact according to Lemma~\ref{CCCc}.
  
 It follows from formula \eqref{eq:conf6} that
\begin{equation}
H(\omega_{j})
  = \kappa_{j}    H^{(0)}_{ \varphi_{j}, \theta_{j} }+ \kappa_{j}   \wt{H}_{ \varphi_{j}, \theta_{j} } , \q j=1,\ldots, N_{0},  
\label{eq:KK3}\end{equation}
where the operators $H^{(0)}_{ \varphi_{j}, \theta_{j} }$ and $ \wt{H}_{ \varphi_{j}, \theta_{j} }$ are defined by relations  \eqref{eq:conf7} and \eqref{eq:conf7B}, respectively. Since $\widetilde{\Sigma}_{\varphi_{j},\theta_{j} } \in C^\delta (\Bbb T)$ (for any $\delta<1$),  Lemma~\ref{CCCc} implies that the operators $Q^{-1}  \wt{H}_{ \varphi_{j}, \theta_{j} }   Q^{-1} $,  are compact.

Comparing definitions   \eqref{eq:A} and \eqref{eq:KK}, we see that
 \[
 \wt{H} = H ( \omega )  -\sum_{n=1}^{N_{0}+2} H_n =H (\ti{\omega}) +\sum_{n=1}^{N_{0}+2}(  H ( \omega_n ) -H_n ) 
 .
\]
Recall that $H ( \omega_n ) = H_n $ for $n=N_{0}+1, N_{0}+2$ and $H ( \omega_j )-H_{j}= \kappa_{j} \wt{H}_{\varphi_{j}, \theta_{j} }  $ for $j=1,\ldots, N_{0}$ according to \eqref{eq:KK3}.
Thus  the operator $Q^{-1} \wt{H} Q^{-1}$ is also compact which  verifies Assumption~\ref{assu}b.

To check Assumption~\ref{assu}c, we have to show that
the  operators 
\begin{equation}
Q^{-1} H (v_{+}) H (v_{-})  Q^{-1}, \q Q^{-1} H (v_{\pm})  H_j Q^{-1} \q {\rm and}\q 
Q^{-1} H_j H_l Q^{-1} ,
\label{eq:COMP}\end{equation}
where $  j,l=1,\ldots, N_{0}$, $ j\neq l$, 
  are compact.  For the first of these operators, this statement follows from Lemma~\ref{cross} because  the singularities of the symbols $v_{+}$ and $v_{-}$ (located at the points $1$ and $-1$) are separated. According to \eqref{eq:KK3}, we have
\[
H (v_{\pm} ) H_j =   H (v_{\pm} ) H (\omega_j) -  \kappa_{j} H (v_{\pm} )  \wt{H}_{ \varphi_{j}, \theta_{j} } .
\]
 The  operators $Q^{-1} H (v_{\pm} )  H (\omega_j) Q^{-1} $  are compact again by Lemma~\ref{cross} because  the singularities of the symbols $v_\pm $ and $\omega_j$
 (located at the points $\pm 1$ and $a_{j}, \bar{a}_{j}$)  are separated.  Observe that
\begin{equation}
Q^{-1} H (v_{\pm} )  \wt{H}_{ \varphi_{j}, \theta_{j} } Q^{-1} =(Q^{-1}  H (v_{\pm} ) Q) (Q^{-1} \wt{H}_{ \varphi_{j}, \theta_{j} } Q^{-1}).
\label{eq:COMPx}\end{equation}
In the right-hand side, the first factor  is bounded by Lemma~\ref{lma.f6} and
 the second factor  is compact by Lemma~\ref{CCCc}. Finally,   using again \eqref{eq:KK3} we find that
 \[
 H_j H_l= (H(\omega_{j}) -\kappa_{j}   \wt{H}_{ \varphi_{j}, \theta_{j} })(H(\omega_{l}) -\kappa_l  \wt{H}_{ \varphi_l, \theta_l }).
 \]
 The  operators $Q^{-1} H (\omega_j) H (\omega_l) Q^{-1}  $  are compact   because  the singularities of the symbols $\omega_j $ and $\omega_l$
 (located at the points $a_j, \bar{a}_j$ and $a_l, \bar{a}_l$)  are separated. The terms containing $\wt{H}_{ \varphi_{j}, \theta_{j} }$ or  $\wt{H}_{ \varphi_{l}, \theta_l }$ can be considered quite similarly to \eqref{eq:COMPx}. It follows that
  the third operator 
\eqref{eq:COMP} is also compact.

 Finally, Assumption~\ref{assu}d is satisfied according to Lemma~\ref{lma.f6}.  
 
   Thus we have verified  Assumption~\ref{assu} with the operators $H_n $, $n =1,\ldots, N=N_{0} +2 $, defined in Theorem~\ref{thm.g1}. Therefore 
 Theorems~\ref{thm.g1a}, \ref{thm.g1b} and \ref{thm.g1} are direct consequences of  Theorems~\ref{main}, \ref{LAP} and \ref{ScTh}, respectively.


  \medskip
 
{\bf 6.4.} 
Let us now reformulate 
  the results of  subs.~6.1   in terms of Hankel operators  \eqref{eq:pww} acting in the space ${\cal H} = {\Bbb H}^2_{+}(\Bbb R)$. We recall that the operators ${\bf H}= {\bf H}(\psi)$   and $H=H(\omega)$ are unitarily equivalent (see formula \eqref{eq:pww}) if their symbols $\psi (\nu)$ and $\omega (\mu)$ are related by equality \eqref{eq:pwu}. If the symbol  $\omega (\mu)$ has a jump $\varkappa (1)$ or $\varkappa (-1)$ at  the point $+1$ or $-1$, then  the  symbol $\psi (\nu)$ has 
   the jump
   \[
   \pmb{\varkappa} (\infty):=\psi(+\infty)- \psi(-\infty)= - \varkappa (1)\q  \mbox{or} \q \pmb{\varkappa} (0):=\psi(+0)- \psi(-0)= - \varkappa (-1)
   \]
    at $\nu=\infty$ or $\nu=0$, respectively.
  If    $\omega (\mu)$ has   jumps $\varkappa  $ and $-\bar{\varkappa} $ at    points $a$ and $\bar{a}$, then    $\psi (\nu)$ has 
   the jumps $- a\varkappa  $ and   $  \bar{ a } \bar{\varkappa}$   at the points
 \[
 b=\frac{i}{2}\frac{1+a}{1-a}\q \mbox{and}\q - b= \frac{i}{2}\frac{1+\bar{a}}{1-\bar{a}}.
 \]
 Note that $b<0$ if $\Im a>0$.

We assume that a symbol $\psi:\Bbb R\to\C$ satisfies the self-adjointness condition 
$\psi (-\nu) =\overline{\psi (\nu)} $ and is  continuous 
apart from some jump discontinuities at finitely many points    $0$ and $  b_{1}, -b_{1}, \ldots,   b_{N_{0}}, - b_{N_{0}}$ (we suppose that $b_{j}<0$) with jumps $ \pmb{\varkappa}(0) $ and
 $\pmb{\varkappa} ( b_{1}),  \pmb{\varkappa} (- b_{1})= -\ov{\pmb{\varkappa} ( b_{1})}, \ldots, \pmb{\varkappa} ( b_{N_{0}}), \pmb{\varkappa} (- b_{N_{0}})= -\ov{\pmb{\varkappa} ( b_{N_{0}})}$, respectively.  We also suppose that the limits $\psi(\pm\infty)$ exist and are finite.
At every singular point   $b  $, we assume the logarithmic H\"older continuity of $\psi(\nu)$. It is defined exactly as in  \eqref{eq:XX3} for finite $b$ and by the relation
 \[
\psi(\nu)- \psi(\pm \infty)= O(\big|\ln | \nu |\big| ^{- \beta_{0}}), \q \nu\to \pm \infty, \q\beta_{0}> 0,
\]
at infinity. 
 
Similarly to \eqref{eq:HO2}, we set
\begin{equation}
\pmb{\varkappa}(\infty) = 2 i\kappa_{\infty},\q \pmb{\varkappa} (0) = 2 i\kappa_{0}, \q
\pmb{\varkappa} (b_{j}) = 2 \kappa_{j}e^{i\phi_{j}},\q \kappa_{j} >0,  \q j=1,\ldots, N_{0},
\end{equation}
and
\begin{equation}
\Delta_{0}=[0, \kappa_{0} ], \q \Delta_{\infty}=[0, \kappa_{\infty} ], \q \Delta_{j}=[-\kappa_{j} , \kappa_{j} ].
\end{equation}
Then Theorem~\ref{thm.g1a} remains true  (with the same conditions on $\beta_{0}$) for the operator ${\bf H}$ if the numbers $\kappa_{+}$ and $\kappa_{-}$ are replaced by $-\kappa_{\infty}$ and $-\kappa_{0}$, respectively.  In particular,  the operator ${\bf H}^{\rm(ac)}$   is unitarily equivalent to the orthogonal sum
\begin{equation}
{\sf A}_{\Delta_{0}} \oplus {\sf A}_{\Delta_{\infty}} \oplus \bigoplus_{j=1}^{N_{0}} {\sf A}_{\Delta_{j} }.
\end{equation}

For the proof, it suffices to notice that under our assumptions on the symbol $\psi(\nu)$, the symbol  $\omega (\mu)$ defined  by equality \eqref{eq:pwu} satisfies Assumption~\ref{assuH}. Therefore Theorem~\ref{thm.g1a} applies to the operator $H(\omega)$ and we only have to use that ${\bf H} (\psi)= {\cal U} H(\omega)  {\cal U}^*$ where the unitary operator ${\cal U} $ is defined by \eqref{eq:un}.

 Theorem~\ref{thm.g1b} also remains unchanged if the function ${ q} (\mu)$ is replaced by a bounded function ${\bf q}(\nu)$ such that ${\bf q}(\nu) = O(\big| \ln |\nu-b|\big|^{-\beta})$ as $\nu\to b$ for some $\beta>1$ in all finite points $b$  of discontinuity   of $\psi(\nu)$ and ${\bf q}(\nu)= O(\big| \ln |\nu|\big|^{-\beta})$ as $|\nu|\to \infty$ if $\psi(+\infty)\neq \psi (-\infty)$.
 
 The reformulation of Theorem~\ref{G} for Hankel operators ${\bf H} ={\bf H} (\psi)$ in the space ${\Bbb H}^2_{+} ({\Bbb R})$ is quite obvious. Now we introduce symbols $\psi_{0}$, $\psi_{\infty}
$ and $\psi_{j}$, $j=1,\ldots, N_{0}$, with the same jumps as the symbol $\psi$ at  its singular points $0$, $\infty$ and $(b_{j}, - b_{j})$,  $j=1,\ldots, N_{0}$, respectively. Then again
 all the assertions of Theorem~$\ref{ScTh}$ with  $N=N_{0}+2$ are true    for the operators   $H_{j}=     H(\psi_{j})$, $j=1,\ldots, N_{0}$, and $H_{N_{0}+1}=    H (\psi_{\infty})$, $H_{N_{0}+2}=    H(\psi_{0})$.  
 
The symbols  $\psi_{0}$, $\psi_{\infty}
$ and $\psi_{j}$ can be expressed in terms of the function $\zeta (\nu)$    defined by equality \eqref{H4}:
\[
\psi_{0} (\nu) = \pmb\varkappa (0)\zeta(\nu), \q \psi_{\infty}(\nu)= - \pmb\varkappa (\infty)\zeta(-\nu^{-1})
\]
and
\[
\psi_j (\nu)= \pmb\varkappa (b_{j})   \zeta (\nu -b_{j}) +{\pmb\varkappa} (-b_{j}) \zeta (\nu + b_{j}), \q j=1,\ldots, N_{0}.
\]

Finally, we note that all model operators in the space ${\Bbb H}^2_{+} ({\Bbb T})$ can be transplanted into the space ${\Bbb H}^2_{+} ({\Bbb R})$ by the unitary transformation $\cal U$. This leads to the reformulation of Theorem~\ref{thm.g1}.

   \medskip
 
{\bf 6.5.}
  All our results can     be extended to Hankel operators acting in spaces of vector-valued functions.
Consider, for example, a Hankel operator $H(\omega)$    in the space ${\Bbb H}^2_{+} (\Bbb T)\otimes {\cal N}$ where ${\cal N}$ is an auxiliary Hilbert space and the symbol $\omega(\mu):{\cal N}\to {\cal N}$ is an operator-valued function.   We suppose that $\omega(\mu)$ are compact operators in ${\cal N}$ satisfying the condition $\omega(\bar{\mu})=\omega(\mu)^*$. Then the operator $H(\omega)$ is self-adjoint. The function $\omega (\mu)$ is supposed to be continuous in the operator norm apart from some jump singularities.   At singular points $1, -1, a_{1}, \bar{a}_{1}, \ldots, a_{N_{0}}, \bar{a}_{N_{0}}$, we     assume condition \eqref{eq:XX3}.  If ${\cal N}={\Bbb C}^k$, then $\omega(\mu)$ is of course a matrix-valued function.

At the points $\pm 1$, the function $\omega(\mu)$ may have  jumps $\varkappa (\pm 1)= 2 i K_{\pm} $ where $K_{\pm}$ are   self-adjoint operators in the space ${\cal N}$. In general , the operators  $K_{\pm}$  have both positive and negative eigenvalues denoted $  \kappa_{\pm}^{(1)},   \kappa_{\pm}^{(2)}, \ldots$.

  If $a_{j}$, $\Im a_j>0$, is a complex singular point of $\omega(\mu)$ with a jump  $\varkappa (a_{j})=2 K_{j}$,
  then   $\omega(\mu)$   also has the jump $-2 K_{j}^*$ at the conjugate point $\bar{a}_{j}$. Let us put  $K_{j}  = R_{j}+i S_{j}$ where $R_{j} =  R_{j}^*$, $S_{j} =  S_{j}^*$  and construct auxiliary compact self-adjoint operators
 \begin{equation}
{\sf K}_{j}= \begin{pmatrix} S_{j} & - R_{j}
 \\
 - R_{j} &  - S_{j}
  \end{pmatrix}
\label{eq:MA}\end{equation}
in the space ${\cal N}\otimes {\Bbb C}^2$.
Since ${\sf K}_{j} {\cal J} =-  {\cal J} {\sf K}_{j}$ for the involution
$
{\cal J} = \begin{pmatrix} 0  & i I
 \\
-i I& 0
  \end{pmatrix},
$
eigenvalues of the operators ${\sf K}_{j}$ are symmetric with respect to the point $0$. We write them as $\pm \kappa_{j}^{(1)}, \pm \kappa_{j}^{(2)}, \ldots$. Diagonalizing matrix \eqref{eq:MA}, we find that
\[
{\sf K}_{j}= Y_{j}^* \diag\{\kappa^{(1)}_{j}, -\kappa^{(1)}_j, \kappa^{(2)}_j, -\kappa^{(2)}_j, \ldots \} Y_{j}
\]
where $Y_{j}$ are unitary operators in the space $\cal N$. Of course, the explicit expression \eqref{eq:DD2} for these operators   no longer makes  sense.


 Instead of \eqref{eq:HOa}, we now have the set of intervals
\[
\Delta_{\pm}^{(l)}=
[0,   \kappa_\pm^{(l)}]\q  {\rm and} \q\Delta_{j}^{(l)}=[-  \kappa_j^{(l)} ,   \kappa_j^{(l)}], \q   l=1, 2, \ldots.
\]
Then Theorem~\ref{thm.g1a} remains true with the following natural modifications. In definition  \eqref{eq:AC1} one should set
\[
{\sf A}_{\Delta_{\pm} }=\bigoplus_{ l}{\sf A}_{\Delta_{\pm}^{(l)}} 
\q {\rm and} \q
  {\sf A}_{\Delta_j }=\bigoplus_{  l}  {\sf A}_{\Delta_j^{(l)} }
\]
which entails also an obvious modification of equality  \eqref{eq:XX4}.  Eigenvalues of  $ H (\omega)$, distinct from 
$0$,  $  \kappa_{\pm }^{(l)} $ and   
$\pm  \kappa_j^{(l)} $, $j=1,\ldots, N_{0}$, for all values of $l$, have finite multiplicities and can accumulate only to these points.  Theorem~\ref{thm.g1b} remains
unchanged.

Let us discuss a generalization of Theorem~\ref{thm.g1}. Similarly to the scalar case (cf. Section~4), the model operators corresponding to the jumps at the points $\mu=1$ and $\mu=-1$ can be defined by the formulas
\[
H_{N_{0} +1}=   H(v_{+})\otimes K_{+} \q {\rm and} \q H_{N_{0} +2}=   H(v_{-})\otimes K_{-}.
\]
 Then the proof of Theorem~\ref{sm} works directly because the operators $H(v_{\pm})$ and $K_{\pm}$  act in different spaces (${\Bbb H}_{+}^2 (\Bbb T)$ and $\cal N$, respectively).  Diagonalizing the self-adjoint operator $K_{\pm}$, we reduce the problem to the scalar case. 
    We emphasize that the space ${\cal N}$ entering into Definition~\ref{strsm} is the same as here.

The model operators $H_{j}$ corresponding to the jumps at the points $(a_{j}, \bar{a}_{j})$
are   constructed similarly to Section~5.   The role of   the model symbols $\kappa \omega_{\varphi} (\mu)$ and $\kappa \Sigma_{\varphi}^{(0)}(\mu)$ (recall formulas  \eqref{eq:si} and \eqref{eq:CF6}) for jumps at the points $(i-i)$ is now played by the matrix-values functions
\[
\omega_{K} (\mu)= (S-\mu R) v (\mu^2).
\]
and ${\sf K}  v(\mu)$, respectively. Therefore the model operators $H_{j}$ corresponding to the jumps at arbitrary complex points $(a_{j}, \bar{a}_{j})$ are constructed
by the formula  
\[
 H_j = T_{\alpha_{j}}^* {\sf U}^* {\sf H} ({\sf K}_{j}v){\sf U} T_{\alpha_{j}} .
  \]
This generalizes the ``scalar" formula $H_{j} =\kappa_{j}  H^{(0)}_{\varphi_{j}, \theta_{j}}$ where $H^{(0)}_{\varphi_{j}, \theta_{j}}$ are operators \eqref{eq:conf7}.

\section{Infinite matrices and integral operators}\label{sec.appl} 

The results of Section~6 can be reformulated in terms of Hankel operators acting in the spaces ${\ell}^2_{+}$ and $L^2 ({\Bbb R}_{+})$. This requires the Fourier expansion of symbols $\omega(\mu)$, $\mu\in {\Bbb T}$, and the Fourier transform  of symbols $\psi(\nu)$, $\nu\in {\Bbb R}$. Therefore the results stated in terms of matrix elements $h_{n}$
of operators $\wh{H}$ and of kernels $ {\bf h} (t)$  of operators $\wh{\bf H}$ are necessarily rather far from optimal. In notation of Section~3, we have  $h_{n}=\hat{\omega}_n$ and $ {\bf h} (t)=(2\pi)^{-1/2} \hat{\psi} (t)$.

 \medskip
 
{\bf 7.1.} 
Let us   consider the space ${\cal H} = {\ell}^2_{+}$ where a Hankel operator $\wh{H}$ acts by the formula \eqref{eq:HF} with matrix elements $h_{n}$.  
We study the case of matrix elements with asymptotics \eqref{eq:F}. To use the results of the previous section,  we have to  construct a symbol $\omega$ satisfying Assumption~\ref{assuH} and such that $\wh{H} ={\cal F} H (\omega){\cal F}^*$.
 Let us start with an elementary observation.

 \begin{lemma}\label{appl1}
Suppose that 
\[
\ti{h}_{n}=O\big(n^{-1}(\ln n)^{-\alpha}\big),\q \alpha>1.
\]
Then the function
\[
\ti{\omega}(\mu)=\sum_{n=0}^\infty \ti{h}_{n} \mu^n
\]
satisfies the logarithmic H\"older condition with   exponent $\beta = \alpha-1$, that is,
\begin{equation}
|\ti{\omega}(\mu')- \ti{\omega}(\mu)| \leq C \big(1+\big| \ln |\mu'-\mu| \big|\big)^{-\beta},\q \mu,\mu'\in {\Bbb T}.
\label{eq:app}\end{equation}
\end{lemma}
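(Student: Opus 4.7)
The plan is to estimate $\tilde\omega(\mu')-\tilde\omega(\mu)$ by the standard dyadic splitting trick, choosing a cut-off $N\sim|\mu'-\mu|^{-1}$ that balances the ``smooth'' low-frequency contribution against the ``crude'' high-frequency tail.

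First I would write
\[
\tilde\omega(\mu')-\tilde\omega(\mu)=\sum_{n=0}^{\infty}\tilde h_{n}(\mu'^{n}-\mu^{n})
\]
and use the two elementary bounds $|\mu'^{n}-\mu^{n}|\le n|\mu'-\mu|$ and $|\mu'^{n}-\mu^{n}|\le 2$ valid on $\bbT$. Fixing an integer $N\ge 2$ to be chosen, this gives
\[
|\tilde\omega(\mu')-\tilde\omega(\mu)|\le|\mu'-\mu|\sum_{n=1}^{N}n|\tilde h_{n}|+2\sum_{n>N}|\tilde h_{n}|.
\]
Plugging in the hypothesis $|\tilde h_{n}|\le C n^{-1}(\ln n)^{-\alpha}$ (for $n\ge 2$) reduces the estimate to
\[
|\tilde\omega(\mu')-\tilde\omega(\mu)|\le C|\mu'-\mu|\sum_{n=2}^{N}(\ln n)^{-\alpha}+2C\sum_{n>N}n^{-1}(\ln n)^{-\alpha}.
\]

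Next I would evaluate the two sums by comparison with integrals. The slowly varying nature of $(\ln n)^{-\alpha}$ gives the standard asymptotic $\sum_{n=2}^{N}(\ln n)^{-\alpha}\le C_{1}N(\ln N)^{-\alpha}$, while
\[
\sum_{n>N}n^{-1}(\ln n)^{-\alpha}\le C_{2}\int_{N}^{\infty}\frac{dx}{x(\ln x)^{\alpha}}=\frac{C_{2}}{\alpha-1}(\ln N)^{-(\alpha-1)}=\frac{C_{2}}{\alpha-1}(\ln N)^{-\beta},
\]
which is where the hypothesis $\alpha>1$ is used (and where the loss of one logarithm, producing the exponent $\beta=\alpha-1$, enters). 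Consequently
\[
|\tilde\omega(\mu')-\tilde\omega(\mu)|\le C'|\mu'-\mu|N(\ln N)^{-\alpha}+C''(\ln N)^{-\beta}.
\]

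Finally I would choose $N=\lfloor|\mu'-\mu|^{-1}\rfloor$ (assuming $|\mu'-\mu|$ is small enough that $N\ge 2$; otherwise \eqref{eq:app} is trivial from boundedness of $\tilde\omega$). Then $|\mu'-\mu|N\le 1$ and $\ln N\asymp\bigl|\ln|\mu'-\mu|\bigr|$, so the first term is bounded by $(\ln N)^{-\alpha}\le(\ln N)^{-\beta}$ and the two contributions combine to give the stated estimate \eqref{eq:app}. There is no serious obstacle here; the only subtlety is the bookkeeping of constants and the verification that the integral comparison for $\sum(\ln n)^{-\alpha}$ indeed produces the factor $N(\ln N)^{-\alpha}$, but both are routine and this is the mechanism by which the one-logarithm loss $\alpha\mapsto\alpha-1$ arises.
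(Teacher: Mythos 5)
Your proof is correct and follows essentially the same route as the paper: split the sum at a cutoff $N$, bound the low frequencies by $n|\mu'-\mu||\ti h_n|$ and the tail by $2\sum_{n>N}|\ti h_n|\le C(\ln N)^{-\beta}$, then choose $N$ as a negative power of $|\mu'-\mu|$. The only (immaterial) difference is that the paper uses the cruder bound $\sum_{n\le N}n|\ti h_n|\le N\sum_n|\ti h_n|$ and takes $N=|\mu'-\mu|^{-1/2}$, whereas you keep the sharper estimate $\sum_{n\le N}(\ln n)^{-\alpha}\le CN(\ln N)^{-\alpha}$ and take $N=\lfloor|\mu'-\mu|^{-1}\rfloor$; both choices balance correctly.
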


 \begin{proof}
 For all $N$, we have
 \begin{equation}
|\ti{\omega}(\mu')- \ti{\omega}(\mu)| \leq \sum_{n\leq  N}  |(\mu')^n-\mu^n||\ti{h}_{n}| 
 + \sum_{n> N}  |(\mu')^n-\mu^n|| \ti{h}_{n}| .
\label{eq:app1}\end{equation}
The first sum here is bounded by
\[
 \sum_{n\leq N} n  |\mu' -\mu | |\ti{h}_{n}|\leq
|\mu' -\mu | N \sum_{n=0}^\infty    |\ti{h}_{n}|.
 \]
 The second sum in \eqref{eq:app1} is bounded by
\[
2 \sum_{n>  N} |\ti{h}_{n}| \leq C \sum_{n>  N}n^{-1} (\ln n)^{-\alpha}\leq
C_{1} (\ln N)^{-\beta} .
 \]
 Choosing, for example, $N=  |\mu' -\mu |^{-1/2}$ and substituting these two estimates into \eqref{eq:app1},  we get \eqref{eq:app}.
   \end{proof}
   
   Let us   introduce the Hankel matrices $ \wh{H}_{+}$ and $ \wh{H}_-$  with elements
  \begin{equation}
     h_{n}^{(+)}=  \pi^{-1} (n+1)^{-1}\q{\rm and} \q  h_{n}^{(-)}= (-1)^n \pi^{-1} (n+1)^{-1},
 \label{eq:BB}\end{equation}
 respectively. As  can   easily be checked by a direct calculation, the   Fourier coefficients     of the function
   \begin{equation}
   \omega_{+}(\mu)= i    (1- \psi/\pi) e^{-i \psi},\q \mu =e^{i \psi},\q 0\leq \psi \leq 2\pi,
 \label{eq:app2}\end{equation}
   equal $h_{n}^{(+)}$ if $n\geq 0$. Similarly, the Fourier coefficients     of the function $\omega_{-}(\mu)= \omega_{+}(-\mu)$  equal $h_{n}^{(-)}$ if $n\geq 0$. It follows that  $\wh{H}_{\pm}={\cal F} H (\omega_{\pm}){\cal F}^*$. Note that $\omega_\pm(\bar\mu)=   \overline{\omega_\pm(\mu)}$. 
 
  \begin{lemma}\label{Hi1}
$1^0$ The operators $ \wh{H}_\pm$ have the a.c. simple spectra coinciding with the interval $[0,1]$. They have no singular continuous spectra and their eigenvalues distinct from $0$ and $1$ have finite multiplicities and may accumulate to these points only.

$2^0$   Let the symbols $v_{\pm}$ be defined by equalities \eqref{H4}, \eqref{eq:om} and \eqref{eq:pm}. Put $\wh{H}(v_{\pm}) ={\cal F} H(v_{\pm}){\cal F}^*$. Then the wave operators   $W_{+}(   \wh{H}_{\pm},  \widehat{H}(v_{\pm}))$ and $W_{-}(   \wh{H}_{\pm},  \widehat{H}(v_{\pm}))$ exist and are complete. 
\end{lemma}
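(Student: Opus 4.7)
The plan is to reduce both claims directly to the general theory of Section~\ref{sec.g}. The text has already verified that $\wh{H}_{\pm} = {\cal F} H(\omega_{\pm}){\cal F}^*$ where $\omega_{\pm}$ are defined in~\eqref{eq:app2}, so it suffices to check that $\omega_{\pm}$ satisfy Assumption~\ref{assuH} with a single real jump at $\pm 1$ and then apply Theorem~\ref{thm.g1a} and Corollary~\ref{cor1}.

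First I would verify Assumption~\ref{assuH} for $\omega_{+}$. Writing $\mu=e^{i\psi}$ with $\psi\in(0,2\pi)$, the expression $i(1-\psi/\pi)e^{-i\psi}$ is infinitely differentiable, so $\omega_{+} \in C^{\infty}(\bbT\setminus\{1\})$. The one-sided limits at $\mu=1$ are $\omega_{+}(1+i0) = i$ (from $\psi\to 0^{+}$) and $\omega_{+}(1-i0) = -i$ (from $\psi\to 2\pi^{-}$), so the only jump is $\varkappa(1) = 2i$, which gives $\kappa_{+} = 1$ in the notation of~\eqref{eq:HO2}. Since $\omega_{+}$ is Lipschitz on each arc approaching the jump point, condition~\eqref{eq:XX3} holds with arbitrarily large $\beta_{0}$. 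The analogous argument applied to $\omega_{-}(\mu)=\omega_{+}(-\mu)$ yields a single jump $2i$ at $\mu=-1$, hence $\kappa_{-}=1$.

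Part~$1^{0}$ then follows at once from Theorem~\ref{thm.g1a} applied to $\omega_{\pm}$ (say with $\beta_{0}=3$): the operator $H(\omega_{\pm})^{\rm(ac)}$ is unitarily equivalent to ${\sf A}_{[0,1]}$, the singular continuous spectrum is empty, and the eigenvalues distinct from $0$ and $\kappa_{\pm}=1$ have finite multiplicities and accumulate only at these two points. These conclusions transfer verbatim to $\wh{H}_{\pm}$ via the unitary equivalence $\wh{H}_{\pm} = {\cal F}H(\omega_{\pm}){\cal F}^{*}$. For part~$2^{0}$, Corollary~\ref{cor1} applied to $\omega_{\pm}$ yields the existence and completeness of the wave operators $W_{\pm}(H(\omega_{\pm}), H(v_{\pm}))$; conjugating by ${\cal F}$ and using $\wh{H}(v_{\pm})={\cal F} H(v_{\pm}){\cal F}^{*}$ produces the analogous statement for $\wh{H}_{\pm}$ and $\wh{H}(v_{\pm})$. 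No substantive obstacle arises beyond the elementary verification of the jump data of $\omega_{\pm}$, after which both parts reduce to citations of Section~\ref{sec.g}.
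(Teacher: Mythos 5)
Your proposal is correct and follows essentially the same route as the paper: the paper's proof likewise observes that $\omega_{\pm}$ is smooth away from $\pm 1$ with one-sided limits $\pm i$ there (so that the jump data match those of $v_{\pm}$, i.e.\ $\kappa_{\pm}=1$ and $\omega_{\pm}-v_{\pm}\in C^{\delta}(\bbT)$), applies Theorem~\ref{thm.g1a} and Corollary~\ref{cor1} to $H(\omega_{\pm})$ and $H(v_{\pm})$, and transplants the conclusions to $\ell^{2}_{+}$ by $\cal F$. Your explicit verification of the jump $\varkappa(1)=2i$ and of the logarithmic H\"older condition is exactly the content the paper compresses into one sentence.
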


  \begin{proof}
  Consider, for example, the operators $\wh{H}_+$  and $\widehat{H}(v_{+})$.  Note that the functions $ \omega_{+}(\mu)$ and $ v_{+}(\mu)$ are smooth on ${\Bbb T}\setminus\{1 \}$ and     $\omega_{+}(1\pm i0)= v_{+}(1\pm i0)= \pm   i $. Therefore $\omega_{+}- v_{+}\in C^\delta ({\Bbb T})$ (for  any $\delta <1$) so that     Theorems~\ref{thm.g1a} and \ref{thm.g1} (see, in particular, Corollary~\ref{cor1}) apply to the operators $ H (v_{+})$ and $ H (\omega_{+})$.  Then it remains to transplant the results obtained into the space ${\ell }^2_{+}$ by the operator $\cal F$.
    \end{proof}

  \begin{remark}\label{Hi2}
Actually, the operators $ \wh{H}_\pm$ have no eigenvalues, and they can be explicitly diagonalized. Indeed, as shown in papers \cite{Ma,Ro}, the operator $\wh{\bf H}_{+}= {\cal L}\wh{H}_{+}{\cal L}^*$ is the Hankel integral operator in $L^2 ({\Bbb R}_{+})$ with kernel $h_{+} (t)=\pi^{-1} t^{-1}e^{-t}$; it can be  diagonalized in terms of the Whittaker functions.    The operator $ \wh{H}_-$ has the same properties since $ \wh{H}_-= \wh{R}^*  \wh{H}_+ \wh{R} $ where $\wh{R} $  is the unitary operator in ${\ell }_{+}^2$ defined by  $(\wh{R} u)_{n}= (-1)^n u_{n}$. 
\end{remark}

Next, we consider the Hankel  matrix $ \wh{H}_{\theta,\varphi}$ with elements
 \[
   h_{n} ( \theta,\varphi)=  2 \pi^{-1}  (n+1)^{-1} \sin (n \theta -\varphi ).
 \]
   In contrast to the operators $\wh{H}_{\pm}$, we cannot diagonalize the operators  $ \wh{H}_{\theta,\varphi}$  explicitly. Nevertheless similarly to Lemma~\ref{Hi1}, we can  describe the structure of their spectra.

    \begin{lemma}\label{appl2}
For all $\theta$ and $\varphi$, 
the operators $\wh{H}_{\theta,\varphi}$ have the a.c. simple spectra coinciding with the interval $[-1,1]$. They have no singular continuous spectra and their eigenvalues distinct from $0$, $1$ and $-1$ have finite multiplicities and may accumulate to these points only.  
    \end{lemma}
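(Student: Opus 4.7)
The strategy parallels that of Lemma~\ref{Hi1}: exhibit an explicit symbol $\omega_{\theta,\varphi}$ on $\bbT$ whose non-negative Fourier coefficients equal $h_n(\theta,\varphi)$, verify Assumption~\ref{assuH} for it, and invoke Theorem~\ref{thm.g1a}. Throughout I assume $\theta\in(0,\pi)$, which is the natural range implicit in the definition (compare \eqref{eq:F}); the boundary cases $\theta\in\{0,\pi\}$ are already covered by Lemma~\ref{Hi1}.

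First I would construct the symbol. A change of variable in \eqref{eq:CF} shows that for any $c\in\bbT$ the Fourier coefficient at index $n\geq 0$ of $\omega_+(c\mu)$ equals $c^{n}h_{n}^{(+)}$, where $\omega_+$ is the function \eqref{eq:app2} with non-negative Fourier coefficients $h_{n}^{(+)}=\pi^{-1}(n+1)^{-1}$. Decomposing
\begin{equation*}
\sin(n\theta-\varphi)=(2i)^{-1}\bigl(e^{-i\varphi}e^{in\theta}-e^{i\varphi}e^{-in\theta}\bigr),
\end{equation*}
I set
\begin{equation*}
\omega_{\theta,\varphi}(\mu)=-ie^{-i\varphi}\omega_+(e^{i\theta}\mu)+ie^{i\varphi}\omega_+(e^{-i\theta}\mu).
\end{equation*}
A direct computation then gives $(\omega_{\theta,\varphi})\,\widehat{}\,_n=h_n(\theta,\varphi)$ for all $n\geq 0$, so $\wh{H}_{\theta,\varphi}={\cal F}H(\omega_{\theta,\varphi}){\cal F}^*$.

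Next I would check Assumption~\ref{assuH}. The identity $\omega_+(\bar\mu)=\overline{\omega_+(\mu)}$ immediately yields the self-adjointness condition $\omega_{\theta,\varphi}(\bar\mu)=\overline{\omega_{\theta,\varphi}(\mu)}$. Since $\omega_+\in C^\infty(\bbT\setminus\{1\})$ with one-sided limits $\omega_+(1\pm i0)=\pm i$, the symbol $\omega_{\theta,\varphi}$ belongs to $C^\infty(\bbT\setminus\{e^{\pm i\theta}\})$, so condition \eqref{eq:XX3} is satisfied with arbitrary $\beta_0>0$. A direct evaluation of the one-sided limits at $e^{\pm i\theta}$ gives jumps $\varkappa(e^{i\theta})=-2e^{i\varphi}$ and $\varkappa(e^{-i\theta})=2e^{-i\varphi}=-\overline{\varkappa(e^{i\theta})}$, in agreement with the self-adjointness relation. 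Hence in the notation \eqref{eq:HO2} one has $N_0=1$, $\kappa_+=\kappa_-=0$ and $\kappa_1=1$.

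Finally, Theorem~\ref{thm.g1a} applies to $H(\omega_{\theta,\varphi})$ with arbitrary $\beta_0$: its a.c. part is unitarily equivalent to $\mathsf{A}_{\Delta_1}$ with $\Delta_1=[-1,1]$, so the a.c. spectrum is simple and coincides with $[-1,1]$; its singular continuous spectrum is empty; and its eigenvalues outside $\{0,\pm 1\}$ have finite multiplicities and can accumulate only to these three points. Transplanting these statements through the unitary ${\cal F}$ yields the assertion for $\wh{H}_{\theta,\varphi}$. The only nontrivial step is the explicit identification of $\omega_{\theta,\varphi}$; the rest is a direct application of the general theory developed earlier.
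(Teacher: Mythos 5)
Your proposal is correct and follows essentially the same route as the paper: you construct the identical symbol $\omega_{\theta,\varphi}(\mu)=i\bigl(e^{i\varphi}\omega_+(e^{-i\theta}\mu)-e^{-i\varphi}\omega_+(e^{i\theta}\mu)\bigr)$ (just written in a rearranged form), match its non-negative Fourier coefficients with $h_n(\theta,\varphi)$, identify the two jumps $-2e^{i\varphi}$ and $2e^{-i\varphi}$ at $e^{\pm i\theta}$, and apply Theorem~\ref{thm.g1a} (Corollary~\ref{cor2}). Your explicit remarks on the self-adjointness check and the restriction $\theta\in(0,\pi)$ are consistent with what the paper leaves implicit.
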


 \begin{proof}
 Set
 \[
 \omega_{\theta,\varphi} (\mu )= i \big(\omega_{+} (e^{- i\theta}\mu) e^{i\varphi}-
 \omega_{+} (e^{ i\theta}\mu ) e^{- i\varphi}\big)
 \]
 where the function $\omega _{+}$ is defined by equality \eqref{eq:app2}.
 The function $ \omega_{\theta,\varphi} (\mu  )$ satisfies Assumption~\ref{assuH}. It has only two points $e^{i\theta}$ and $e^{-i\theta}$ of discontinuity with the jumps $- 2 e^{i\varphi}$ and $ 2 e^{- i\varphi}$,
 respectively. Therefore Theorem~\ref{thm.g1a} (see, in particular, Corollary~\ref{cor2}) applies to the operator  $H (\omega_{ \theta,\varphi})$.

 Using expression  \eqref{eq:BB} for  the Fourier coefficients of the function $  \omega_+ (\mu ) $, we see that the Fourier coefficients of the function $  \omega_{\theta,\varphi} (\mu ) $ equal
 \[
 \hat{ \omega}_{n} ( \theta,\varphi)=i  (e^{i\varphi}e^{- i n\theta}-
 e^{-i\varphi}e^{ i n\theta}) h_{n}^{(+)}  =h_{n} ( \theta,\varphi),\q n\geq 0,
 \]
 and hence $ \wh{H}_{\theta,\varphi} ={\cal F} H ( \omega_{\theta,\varphi}  ){\cal F}^*$. 
  \end{proof}

 Let us return to the operator $\wh{H}$ whose matrix elements $h_{n}$ have asymptotics \eqref{eq:F} where $\alpha_{0} >2$.  Put
 \[
 \ti{h}_{n} = h_{n} -\kappa_{+}h_{n}^{(+)}-\kappa_{-}h_{n}^{(-)}- \sum_{j=1}^{N_{0}} \kappa_{j}h_{n} ( \theta_{j},\varphi_{j}), \q n\geq 0,  
 \]
 Since $\ti{h}_{n} =O(n^{-1} (\ln n)^{-\alpha_{0}})$ as $n\to\infty$, it follows from Lemma~\ref{appl1} that the function $\ti{\omega} ={\cal F}^* \ti{h}$ is logarithmic H\"older continuous with exponent $\beta_{0}=\alpha_{0}-1>1$. 
  Set
  \[
\omega (\mu)= \kappa_{+} \omega_{+}(\mu) + \kappa_{-} \omega_{-}(\mu) 
 +  \sum_{j=1}^{N_{0}} \kappa_{j} \omega_{\theta_{j}, \varphi_{j}} (\mu) +\ti{\omega}(\mu)  .
  \] 
 By our construction, the Fourier coefficients of this function are $({\cal F}\omega)_{n}= h_{n}$, $n\geq 0$. Let us now  apply the results of subs.~6.1 to the operators $H(\omega)$ and  $H_{j}= \kappa_{j} H (\omega_{\theta_{j}, \varphi_{j}})$, $j=1,\ldots, N_{0}$, $H_{N_{0}+1}= \kappa_{+} H (\omega_{+})$, $H_{N_{0}+2}= \omega_{-} H (\omega_{-})$. Transplanting these results into the space ${\ell}^2_{+}$ by the operator $\cal F$, we obtain the following assertion.

    \begin{theorem}\label{L}
Suppose that   coefficients $h_{n}$ of a Hankel matrix $\wh{H}$ admit representation \eqref{eq:F} where
   $\theta_{j} $ are distinct numbers in $ (0,\pi)$; the phases $\varphi_{j}\in [0,\pi)$  and the amplitudes 
   $\kappa_+, \kappa_-, \kappa_{j}\in{\Bbb R}$ are arbitrary.

  $1^0$
    If   $\alpha_{0}> 2$, then  the 
operator $\wh{H}^{\rm (ac) } $   is unitarily equivalent to the orthogonal sum \eqref{eq:AC1}.  Moreover, the wave operators $W_{\pm}(\wh{H}, \kappa_{\pm} \wh{H}_{\pm})$ and $W_{\pm}(\wh{H},  \kappa_{j} \wh{H} ( \theta_{j},\varphi_{j}))$, $j=1,\ldots, N_{0}$,  exist, their ranges are mutually orthogonal, and their orthogonal sum exhausts the subspace $   {\cal H}^{\rm (ac) }(\wh{H})$.
   
    $2^0$  If   $\alpha_{0}> 3$, then  the singular continuous spectrum of $\wh{H}$ is empty and its eigenvalues different from    the points $0$, $\kappa_{+} $,  $\kappa_{-}  $ and $ \pm \kappa_{j}$  have finite multiplicities and may accumulate only to these points. 
 \end{theorem}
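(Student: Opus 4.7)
The plan is to reduce Theorem~\ref{L} to Theorem~\ref{thm.g1a} and Theorem~\ref{G} (applied in the circle representation ${\Bbb H}^2_+({\Bbb T})$) by constructing a piecewise-continuous symbol $\omega$ on $\bbT$ whose nonnegative Fourier coefficients coincide with the given $h_n$, and then transplanting all conclusions to ${\ell}^2_+$ through the unitary operator ${\cal F}$.

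First, I would assemble the symbol as
\[
\omega(\mu) \;=\; \kappa_+\,\omega_+(\mu)+\kappa_-\,\omega_-(\mu)+\sum_{j=1}^{N_0}\kappa_j\,\omega_{\theta_j,\varphi_j}(\mu)+\ti{\omega}(\mu),
\]
where $\omega_\pm$ are the functions of \eqref{eq:app2} (and its reflection), $\omega_{\theta_j,\varphi_j}$ is the symbol built in the proof of Lemma~\ref{appl2}, and $\ti{\omega}={\cal F}^*\ti{h}$ with
\[
\ti{h}_n \;=\; h_n-\kappa_+ h_n^{(+)}-\kappa_- h_n^{(-)}-\sum_{j=1}^{N_0}\kappa_j\,h_n(\theta_j,\varphi_j),\q n\ge 0.
\]
By hypothesis \eqref{eq:F} one has $\ti{h}_n=O\bigl(n^{-1}(\ln n)^{-\alpha_0}\bigr)$, so Lemma~\ref{appl1} gives $\ti{\omega}\in C(\bbT)$ with the logarithmic H\"older condition of exponent $\beta_0=\alpha_0-1$ everywhere on $\bbT$. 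The first three summands are smooth away from $\{\pm 1, e^{\pm i\theta_j}\}$ and carry the jumps $2i\kappa_\pm$ at $\pm 1$ and $\pm 2\kappa_j e^{\mp i\varphi_j}$ at $e^{\pm i\theta_j}$; adding the H\"older continuous function $\ti\omega$ neither introduces new jumps nor spoils \eqref{eq:XX3}. Hence $\omega$ satisfies Assumption~\ref{assuH} with exponent $\beta_0=\alpha_0-1$, and by construction $({\cal F}\omega)_n=h_n$ for all $n\ge 0$, so $\wh{H}={\cal F}H(\omega){\cal F}^*$ (up to the trivial zero subspace discussed in subs.~3.4).

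Now I would apply Theorem~\ref{thm.g1a} to $H(\omega)$: the condition $\alpha_0>2$ translates into $\beta_0>1$ and yields the unitary equivalence of $H(\omega)^{\rm(ac)}$ to the orthogonal sum \eqref{eq:AC1}, while $\alpha_0>3$ gives $\beta_0>2$ and thus the absence of singular continuous spectrum together with the required control on eigenvalues. Since conjugation by ${\cal F}$ preserves unitary equivalence, spectral types and eigenvalue multiplicities, both assertions descend to $\wh{H}$. For the wave-operator statement, I would invoke Theorem~\ref{G} with the choices $\omega_{N_0+1}=\kappa_+\omega_+$, $\omega_{N_0+2}=\kappa_-\omega_-$ and $\omega_j=\kappa_j\omega_{\theta_j,\varphi_j}$, which are admissible since each has the correct isolated jump structure. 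Conjugating by ${\cal F}$ turns $H(\kappa_\pm\omega_\pm)$ into $\kappa_\pm\wh{H}_\pm$ and $H(\kappa_j\omega_{\theta_j,\varphi_j})$ into $\kappa_j\wh{H}(\theta_j,\varphi_j)$, producing precisely the wave operators stated in the theorem, their mutual orthogonality of ranges, and asymptotic completeness.

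The main obstacle is bookkeeping at the level of the matching between jumps, amplitudes, and phases: one must check that $\omega_+,\omega_-,\omega_{\theta_j,\varphi_j}$ really do carry the Fourier coefficients $h_n^{(+)},h_n^{(-)},h_n(\theta_j,\varphi_j)$ respectively (this is the direct computation already used in subs.~7.1) and that their jumps, when written in the form \eqref{eq:HO2}, produce the intervals $\Delta_\pm=[0,\kappa_\pm]$ and $\Delta_j=[-\kappa_j,\kappa_j]$ appearing in \eqref{eq:AC1}. Everything else is automatic from the work already carried out in Sections~4--6: the auxiliary perturbation $\ti\omega$ is continuous with sufficient modulus of continuity, and Theorem~\ref{G} was precisely designed to allow the model operators to be replaced by any Hankel operator with matching jump singularities, which is exactly what $\wh{H}_\pm$ and $\wh{H}(\theta_j,\varphi_j)$ provide in the ${\ell}^2_+$ representation.
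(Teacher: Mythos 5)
Your proposal is correct and follows essentially the same route as the paper: you assemble the symbol $\omega=\kappa_+\omega_++\kappa_-\omega_-+\sum_j\kappa_j\omega_{\theta_j,\varphi_j}+\ti\omega$, control the remainder via Lemma~\ref{appl1}, verify Assumption~\ref{assuH} with $\beta_0=\alpha_0-1$, invoke Theorems~\ref{thm.g1a} and~\ref{G}, and transplant by $\cal F$, which is exactly the argument of subs.~7.1. (Your stated signs for the jumps of $\omega_{\theta_j,\varphi_j}$ at $e^{\pm i\theta_j}$ differ from the paper's $-2e^{i\varphi}$ and $2e^{-i\varphi}$, but this is immaterial since Theorem~\ref{G} only needs the model symbol to carry the same jump as $\omega$ at each singular point, which holds automatically here.)
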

 
   
 \medskip
 
{\bf 7.2.} 
Next, we consider a Hankel operator $\widehat{\bf H}$      acting in  the space ${\cal H} = L^2({\Bbb R}_{+})$ by the formula
\[
 (\widehat{\bf H} u )(t)=\int_{0}^\infty {\bf h}(t+s) u (s) ds.
\]
We suppose that   ${\bf h} \in L^1_{\rm loc} ({\Bbb R}_{+})$ and ${\bf h} (t)= O(t^{-1})$  as $t\to \infty$ and $t\to 0$. The operator 
$\widehat{\bf H}$ is symmetric if ${\bf h} (t)$ is a real function. Observe that
 $\widehat{\bf H}$ is compact if ${\bf h} (t)= o(t^{-1})$  as $t\to \infty$ and $t\to 0$. On the other hand,
if ${\bf h} (t)$ behaves as $t^{-1}$ for $t\to \infty$ or  for $t\to 0$, then the operator $\widehat{\bf H}$ acquires an a.c. spectrum. For example, the Mehler operator $\cal M$ defined by formula \eqref{eq:Meh}
has the simple a.c. spectrum coinciding with $[0,1]$.

We consider kernels with singularities both at $t=\infty$ and $t=0$.
 To be   precise, we assume that    
 \begin{equation}
 {\bf h} (t)=  (\pi t)^{-1} \big(h_{\infty}+ 2\sum_{j=1}^{N_{0}} h_{j}\sin ( b_{j} t- \phi_{j}) +O(| \ln t |^{-\alpha_{0}})\big),\q t\to \infty ,
  \label{eq:bb1}\end{equation}
  and
 \begin{equation}
 {\bf h}(t)= (\pi t)^{-1} \big(h_{0} + O(| \ln t |^{-\alpha_{0}})\big) ,\q   t\to 0.
  \label{eq:bb}\end{equation}
  We emphasize that the right-hand side of \eqref{eq:bb1} contains oscillating terms.

  
  As in the previous subsection, we are going to use the results of Section~6 on Hankel operators in the Hardy space
  ${\Bbb H}^2_{+} ({\Bbb R} )$ (see subs.~6.4). Thus we have to construct a piecewise continuous symbol $\psi(\nu)$
  such that $\widehat{\bf H}=\Phi {\bf H} (\psi) \Phi^*$.
  The following assertion plays the role of Lemma~\ref{appl1}.

 \begin{lemma}\label{appl1L}
Suppose that a function $ \widetilde{\bf h} \in L^1 ({\Bbb R}_{+})$ obeys the condition
\begin{equation}
\widetilde{\bf h}(t)=O(t^{-1}|\ln t|^{-\alpha}),\q \alpha>1,
\label{eq:appL}\end{equation}
  as $t\to\infty$.
Then the function
\[
\ti{\psi}(\nu)=\int_0^\infty \widetilde{\bf h}(t) e^{i \nu t} dt
\]
is  logarithmic H\"older continuous with   exponent $\beta = \alpha-1$, that is,
\begin{equation}
|\ti{\psi}(\nu')- \ti{\psi}(\nu)| \leq C (1+\big| \ln |\nu'-\nu| \big|)^{-\beta},\q \nu,\nu'\in {\Bbb R}.
\label{eq:appL1}\end{equation}
\end{lemma}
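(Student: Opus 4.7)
\smallskip

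The plan is to follow the argument of Lemma~\ref{appl1} essentially verbatim, with the Fourier series replaced by the Fourier integral and the splitting index $n \leq N$ replaced by a cutoff at $t = N$ in the integral. First I would note that since $\widetilde{\mathbf h} \in L^1(\mathbb R_+)$ the function $\widetilde\psi$ is bounded on $\mathbb R$, so it suffices to establish \eqref{eq:appL1} for $|\nu'-\nu|$ sufficiently small; otherwise the estimate follows by taking $C$ large enough.

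For such small $|\nu'-\nu|$, and for any $N \geq 1$, I would split
\[
\widetilde\psi(\nu') - \widetilde\psi(\nu)
= \int_0^N \widetilde{\mathbf h}(t)\bigl(e^{i\nu' t} - e^{i\nu t}\bigr)\,dt
  + \int_N^\infty \widetilde{\mathbf h}(t)\bigl(e^{i\nu' t} - e^{i\nu t}\bigr)\,dt.
\]
For the first integral I would use the elementary bound $|e^{i\nu' t} - e^{i\nu t}| \leq |\nu'-\nu|\, t \leq |\nu'-\nu|\, N$ on $[0,N]$, which yields the estimate $|\nu'-\nu|\, N\, \|\widetilde{\mathbf h}\|_{L^1}$. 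For the second integral I would use $|e^{i\nu' t} - e^{i\nu t}| \leq 2$ together with the hypothesis \eqref{eq:appL}, giving
\[
\Bigl|\int_N^\infty \widetilde{\mathbf h}(t)\bigl(e^{i\nu' t} - e^{i\nu t}\bigr)\,dt\Bigr|
\leq 2C\int_N^\infty \frac{dt}{t(\ln t)^{\alpha}} \leq C_1 (\ln N)^{-\beta},
\]
which is exactly the analog of the tail estimate in the proof of Lemma~\ref{appl1}.

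Finally I would balance the two contributions by choosing, e.g., $N = |\nu'-\nu|^{-1/2}$. Then $|\nu'-\nu|\, N = |\nu'-\nu|^{1/2}$ decays faster than any power of $|\ln|\nu'-\nu||^{-1}$, while $\ln N = \tfrac12 |\ln|\nu'-\nu||$, so the second term is $O\bigl(|\ln|\nu'-\nu||^{-\beta}\bigr)$. Adding the two estimates and absorbing the first, faster-decaying term into the constant gives \eqref{eq:appL1}. There is no genuine obstacle: the only point requiring attention is that the $L^1$ hypothesis on $\widetilde{\mathbf h}$ together with the restriction of the integration to a bounded interval $[0,N]$ makes the trivial bound $\int_0^N t|\widetilde{\mathbf h}(t)|\,dt \leq N\|\widetilde{\mathbf h}\|_{L^1}$ available, so no additional integrability of $t\widetilde{\mathbf h}(t)$ near the origin is needed.
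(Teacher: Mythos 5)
Your argument is correct and coincides with the paper's own proof: the same splitting of the integral at a cutoff $a=N$, the same bounds $|e^{i\nu' t}-e^{i\nu t}|\leq |\nu'-\nu|\,t$ on the initial piece and $\leq 2$ on the tail, and the same choice $N=|\nu'-\nu|^{-1/2}$. The preliminary reduction to small $|\nu'-\nu|$ and the closing remark about integrability near the origin are harmless additions.
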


 \begin{proof}
It follows from condition \eqref{eq:appL} that for an arbitrary $a>1$
\begin{equation}
\big| \int_0^a  \widetilde{\bf h}(t) ( e^{i \nu' t} - e^{i \nu t}) dt \big|  \leq  C |\nu'-\nu|
\int_{0}^a t  |\widetilde{\bf h}( t ) | dt 
\leq  C_{1}  |\nu'-\nu| \, a \int_{0}^\infty  | \widetilde{\bf h}( t ) | dt  .
\label{eq:appL2}\end{equation}
Moreover, we have
\[
\big| \int_{a }^\infty \widetilde{\bf h}(t) ( e^{i \nu' t} - e^{i \nu t}) dt \big|  \leq 2  \int_{a }^\infty | \widetilde{\bf h}(t) | dt \leq C |\ln a|^{-\beta}.
\]
 Combining this estimate with \eqref{eq:appL2} and choosing, for example,  $a=|\nu'-\nu|^{-1/2}$,
we get \eqref{eq:appL1}.
   \end{proof}

Let us first consider kernels ${\bf h} (t)$ with asymptotics    \eqref{eq:bb1} as $t\to\infty$ and regular at the point $t=0$.
Recall that,    as shown in subs.~4.2,   the symbol $\psi_{\infty}(\nu)$ of the Mehler operator $ {\cal M}=:\widehat{\bf H}_{\infty}$  can be chosen as $\psi_{\infty}(\nu) = 2 i   \zeta (\nu)$ where $\zeta (\nu)$ is function \eqref{H4}. It follows that
the Fourier transform of the function 
\[
\psi_{\phi,b}(\nu)= 2 e^{-i\phi} \zeta (\nu +b)- 2 e^{i\phi} \zeta (\nu -b)
\]
 equals
$  (2\pi)^{1/2}  {\bf h}_{\phi ,b}(t)$   where
\[
{\bf h}_{\phi,b}(t)= 2 \pi^{-1} (2+t)^{-1}   \sin (bt-\phi).
\]
Thus,  a symbol of the operator $ \widehat{\bf H}_{\phi,b}$ with integral kernel  ${\bf h}_{\phi,b}(t)$ can be chosen as $\psi_{\phi,b}(\nu)$. Since the function $\psi_{\phi,b}(\nu)$ has only two jumps $- 2 e^{i\phi}$ and $ 2 e^{ -i \phi}$ at the points $b$ and $-b$, respectively, Theorem~\ref{thm.g1a} entails the following result (cf. Lemma~\ref{appl2}).

 \begin{lemma}\label{appl2a}
For all $\phi$ and $b\neq 0$,   the 
operators $  \widehat{\bf H}_{\phi,b}$ have the a.c. simple spectra coinciding with the interval $[-1,1]$.   They have no singular continuous spectra and their eigenvalues distinct from $0$, $1$ and $-1$ have finite multiplicities and may accumulate to these points only.  
    \end{lemma}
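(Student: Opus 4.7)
The plan is to apply the reformulation of Theorem~\ref{thm.g1a} for Hankel operators on $\mathbb H^2_+(\mathbb R)$ discussed in subs.~6.4, using $\psi_{\phi,b}(\nu)$ as the symbol of $\widehat{\bf H}_{\phi,b}$. By the computation preceding the lemma, we have $\widehat{\bf H}_{\phi,b}=\Phi\,{\bf H}(\psi_{\phi,b})\,\Phi^{*}$, so it suffices to analyze ${\bf H}(\psi_{\phi,b})$ on $\mathbb H^2_+(\mathbb R)$ and transplant via the (unitary) Fourier transform.

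First I would verify that $\psi_{\phi,b}$ satisfies the hypotheses of Assumption~\ref{assuH} in its half-line form (subs.~6.4). Self-adjointness $\psi_{\phi,b}(-\nu)=\overline{\psi_{\phi,b}(\nu)}$ follows at once from the fact that $\zeta$ is real and odd (see \eqref{H4}). The properties of $\zeta$ recorded after \eqref{H4} give $\zeta\in C^\infty(\mathbb R\setminus\{0\})$, $\zeta(\pm 0)=\pm 1/2$, and an asymptotic expansion in odd powers of $\nu^{-1}$ at infinity; hence $\psi_{\phi,b}$ is smooth on $\mathbb R\setminus\{b,-b\}$, admits the finite limits $\psi_{\phi,b}(\pm\infty)=0$ with no jump at infinity, is continuous at $\nu=0$ (since $b\ne 0$), and is logarithmically H\"older continuous at $\nu=\pm b$ with \emph{any} exponent $\beta_0$ (so, in particular, with $\beta_0>2$). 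Taking $b>0$ (the case $b<0$ is symmetric) and writing $b_1=-b<0$, a direct computation using $\zeta(+0)-\zeta(-0)=1$ gives the jumps
\[
\pmb{\varkappa}(-b)=2e^{-i\phi},\qquad \pmb{\varkappa}(b)=-2e^{i\phi}=-\overline{\pmb{\varkappa}(-b)},
\]
so in the notation of subs.~6.4 there is a single complex-conjugate pair with amplitude $\kappa_1=1$, and no jumps at $0$ or $\infty$ (i.e.\ $\kappa_0=\kappa_\infty=0$).

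Next I would invoke the $\mathbb R$-version of Theorem~\ref{thm.g1a}. With $N_0=1$ and the above parameters, the associated bands reduce to the single interval $\Delta_1=[-\kappa_1,\kappa_1]=[-1,1]$, and the thresholds are $\{0,\pm 1\}$. Since both $\beta_0>1$ and $\beta_0>2$ are satisfied, part $1^{0}$ gives that ${\bf H}(\psi_{\phi,b})^{(\mathrm{ac})}$ is unitarily equivalent to $\mathsf A_{[-1,1]}$, hence the a.c.\ spectrum is $[-1,1]$ with multiplicity one, and part $2^{0}$ gives the absence of singular continuous spectrum as well as the finite multiplicity and accumulation structure of eigenvalues outside $\{0,\pm 1\}$. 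Transplanting by the unitary equivalence $\widehat{\bf H}_{\phi,b}=\Phi\,{\bf H}(\psi_{\phi,b})\,\Phi^{*}$ yields the stated conclusions for $\widehat{\bf H}_{\phi,b}$.

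There is essentially no genuine obstacle here: all the analytic and spectral work has already been packaged into Theorem~\ref{thm.g1a} and its $\mathbb R$-reformulation. The only minor point that must be handled carefully is the bookkeeping of the jump structure of $\psi_{\phi,b}$, in particular checking that $\psi_{\phi,b}$ has no hidden discontinuities at $\nu=0$ or at $\nu=\pm\infty$ and that the self-adjointness symmetry $\psi_{\phi,b}(-\nu)=\overline{\psi_{\phi,b}(\nu)}$ holds; once this is done, everything follows by direct quotation.
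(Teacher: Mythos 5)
Your proposal is correct and follows essentially the same route as the paper: the text preceding the lemma identifies $\psi_{\phi,b}(\nu)=2e^{-i\phi}\zeta(\nu+b)-2e^{i\phi}\zeta(\nu-b)$ as a symbol of $\widehat{\bf H}_{\phi,b}$ with exactly two jumps $-2e^{i\phi}$ and $2e^{-i\phi}$ at $\pm b$ and then quotes Theorem~\ref{thm.g1a} in its $\mathbb H^2_+(\mathbb R)$ reformulation. Your jump bookkeeping (including $\kappa_1=1$, no jumps at $0$ or $\infty$, and the self-adjointness symmetry) matches the paper's.
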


    It remains to construct a model operator for the kernel with singularity \eqref{eq:bb} as $t\to 0$. Observe that the Fourier transform of the function
    \[
    \psi_{0} (\nu)= 2 \pi^{-1} i \int_{0}^\infty \frac{ \sin (t\nu)}{t} e^{-t} dt= \pi^{-1}\mbox {v.p.} \int_{-\infty}^\infty
    \frac{ e^{ it\nu} } {t} e^{-| t |} dt
 \]
 (the right integral is understood in the sense of the principal value)    equals
    \[
\hat \psi_{0} (t)=(2/\pi)^{1/2}  \mbox {v.p.} \,  t^{-1} e^{-| t |}.
\]
This implies that $ \psi_{0} (\nu)$   is a symbol of the Hankel operator $\widehat{\bf H}_{0}$ with kernel ${\bf h}_{0}(t)=(\pi t)^{-1}e^{-t}  $. The function $ \psi_{0} (\nu)$ is smooth, but its limits at infinity $ \psi_{0} (\pm \infty)=\pm i$ are different. Thus by Corollary~\ref{cor1}, the 
operator $ \widehat{\bf H}_{0}$  has the a.c. simple spectrum coinciding with the interval $[0,1]$.   It  has no singular continuous spectrum  and its eigenvalues distinct from $0$ and $1$   have finite multiplicities and may accumulate to these points only.    Actually, it is known (see Remark~\ref{Hi2}) that the   operator $ \widehat{\bf H}_0$ has no eigenvalues, and it can be explicitly diagonalized.
 
  Let us return to the operator $\widehat{\bf H}$ whose kernel has asymptotics \eqref{eq:bb1} and \eqref{eq:bb}. Put
   \[
 {\bf h}_{\infty}(t)= h_{\infty} \pi ^{-1} (t+2)^{-1}, \q  {\bf h}_j(t)= 2 h_{j}\pi ^{-1} (t+2)^{-1}\sin ( b_{j} t- \phi_{j}), \q {\bf h}_{0}(t)= h_{0}(\pi t)^{-1} e^{-t}
  \]
  and
   \begin{equation}
 \widetilde{\bf h}(t)=  {\bf h}(t)-  {\bf h}_{\infty}(t)-\sum_{j=1}^{N_{0}} {\bf h}_j(t) -  {\bf h}_{0}(t).
\label{eq:MN6}\end{equation}
  By our construction, $\ti{{\bf h}} \in L^1 ({\Bbb R}_{+})$ and it satisfies condition \eqref{eq:appL} with $\alpha=\alpha_{0}$ both as $t\to \infty$ and $t\to 0$. It follows from Lemma~\ref{appl1L} that the function
   \begin{equation}
  \ti{\psi} (\nu)= \int_{0}^\infty   \widetilde{\bf h}(t) e^{i\nu t} dt
  \label{eq:MN7}\end{equation}
  is  logarithmic H\"older continuous with   exponent $\beta_{0} = \alpha_{0}-1$. Of course $ \ti{\psi} (\nu)\to 0$ as $|\nu|\to \infty$, but to verify the condition
 \begin{equation}
  \ti{\psi} (\nu)=O( \big|\ln |\nu|\big|^{-\beta_{0} })  \q{\rm as}  \q |\nu|\to \infty,
\label{eq:MN2}\end{equation}
we need an additional (very weak) assumption.

 \begin{assumption}\label{bad}
A function ${\bf h}(t)$ is absolutely continuous except a finite number of jumps and, for some $k$,
\[
\int_{a^{-1}}^a |{\bf h}'(t) | dt =O(a^{k}) \q{\rm as}  \q a\to \infty.
\]
\end{assumption}
  
 \begin{lemma}\label{appl1Lz}
Let   conditions \eqref{eq:bb1},  \eqref{eq:bb} and Assumption~$\ref{bad}$  be satisfied. Then
 function \eqref{eq:MN7} obeys   estimate \eqref{eq:MN2} with
  $\beta_{0}=\alpha_{0}-1$.
\end{lemma}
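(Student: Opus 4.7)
The plan is to split $\ti{\psi}(\nu)$ into three pieces in the spirit of a stationary-phase/integration-by-parts argument, handle the two tails by the pointwise bound on $\widetilde{\bf h}$, and handle the middle piece by integration by parts using Assumption~\ref{bad}. Specifically, for $|\nu|$ large fix a small exponent $m>0$ (to be chosen below in terms of the constant $k$ from Assumption~\ref{bad}) and write
\[
\ti{\psi}(\nu) = \Bigl(\int_0^{|\nu|^{-1}} + \int_{|\nu|^{-1}}^{|\nu|^m} + \int_{|\nu|^m}^\infty\Bigr) \widetilde{\bf h}(t)\, e^{i\nu t}\, dt =: I_1 + I_2 + I_3.
\]

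The first step is to verify that $\widetilde{\bf h}(t)=O\bigl(t^{-1}|\ln t|^{-\alpha_0}\bigr)$ both as $t\to 0$ and as $t\to\infty$. This follows from the definition \eqref{eq:MN6} together with \eqref{eq:bb1}, \eqref{eq:bb}: the subtractions ${\bf h}_\infty, {\bf h}_j, {\bf h}_0$ are designed so that the leading $(\pi t)^{-1}$-terms in the two asymptotics cancel, and the further discrepancies (such as $(\pi t)^{-1}-(\pi(t+2))^{-1}=O(t^{-2})$ at infinity, or $(\pi t)^{-1}e^{-t}-(\pi t)^{-1}=O(1)$ at zero) are dominated by the logarithmic error term. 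Given this pointwise bound, the substitution $u=-\ln t$ gives
\[
|I_1|+|I_3|\leq C\int_0^{|\nu|^{-1}}\frac{dt}{t|\ln t|^{\alpha_0}}+C\int_{|\nu|^m}^\infty\frac{dt}{t|\ln t|^{\alpha_0}}\leq C\bigl(1+m^{-\beta_0}\bigr)\,|\ln|\nu||^{-\beta_0}.
\]

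For $I_2$ I integrate by parts. Since ${\bf h}$ is absolutely continuous except for finitely many jumps (Assumption~\ref{bad}) and the model kernels ${\bf h}_\infty,{\bf h}_j,{\bf h}_0$ are smooth, the same is true of $\widetilde{\bf h}$. Writing $t_1<\cdots<t_M$ for the jumps lying in $[|\nu|^{-1},|\nu|^m]$,
\[
I_2=\frac{1}{i\nu}\Bigl[\widetilde{\bf h}(t)e^{i\nu t}\Bigr]_{|\nu|^{-1}}^{|\nu|^m}+\frac{1}{i\nu}\sum_{j=1}^{M}\bigl[\widetilde{\bf h}(t_j^-)-\widetilde{\bf h}(t_j^+)\bigr]e^{i\nu t_j}-\frac{1}{i\nu}\int_{|\nu|^{-1}}^{|\nu|^m}\widetilde{\bf h}'(t)\,e^{i\nu t}\,dt.
\]
The endpoint boundary terms are bounded by $C|\nu|^{-1}|\widetilde{\bf h}(|\nu|^{\pm m})|=O(|\ln|\nu||^{-\alpha_0})$, which is absorbed into $O(|\ln|\nu||^{-\beta_0})$ since $\alpha_0>\beta_0$. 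The jump terms contribute $O(|\nu|^{-1})$, as there are finitely many jumps of finite size. For the derivative integral, I transfer Assumption~\ref{bad} from ${\bf h}$ to $\widetilde{\bf h}$: the derivatives of ${\bf h}_\infty$ and ${\bf h}_j$ are bounded on $\mathbb R_+$ (with at most a growing-in-$b_j$ constant), while ${\bf h}_0'(t)=O(t^{-2})$ near $0$, so $\int_{a^{-1}}^a|\widetilde{\bf h}'(t)|\,dt=O(a^{k'})$ for $k'=\max(k,1)$. Consequently
\[
\Bigl||\nu|^{-1}\int_{|\nu|^{-1}}^{|\nu|^m}\widetilde{\bf h}'(t)e^{i\nu t}\,dt\Bigr|\leq C|\nu|^{mk'-1}.
\]

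Choosing $m=1/(2k')$ makes this last term $O(|\nu|^{-1/2})$, which is far smaller than $|\ln|\nu||^{-\beta_0}$. Combining the three estimates yields $|\ti\psi(\nu)|\leq C|\ln|\nu||^{-\beta_0}$ for large $|\nu|$, proving \eqref{eq:MN2}. The only mildly delicate step is the transfer of Assumption~\ref{bad} from ${\bf h}$ to $\widetilde{\bf h}$ and the bookkeeping of the jump contributions; both are routine provided one notes that all singularities of the subtracted model kernels are either smooth on $(0,\infty)$ or (for ${\bf h}_0$) have a polynomial singularity at the origin which is harmless after the endpoint truncation at $|\nu|^{-1}$.
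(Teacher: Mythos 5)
Your overall strategy --- split off two tails controlled by the pointwise bound $\widetilde{\bf h}(t)=O(t^{-1}|\ln t|^{-\alpha_0})$ and integrate by parts on the middle interval using Assumption~\ref{bad} --- is exactly the paper's, and most of your bookkeeping (transferring Assumption~\ref{bad} to $\widetilde{\bf h}$, the endpoint and jump terms) is fine. However, the asymmetric choice of cut points $[|\nu|^{-1},|\nu|^{m}]$ breaks the key estimate. Assumption~\ref{bad} bounds $\int_{a^{-1}}^{a}|\widetilde{\bf h}'|\,dt$ only over \emph{symmetric} intervals $[a^{-1},a]$; to cover $[|\nu|^{-1},|\nu|^{m}]$ you must take $a\geq|\nu|$, which yields $\int_{|\nu|^{-1}}^{|\nu|^{m}}|\widetilde{\bf h}'|\,dt=O(|\nu|^{k'})$, not the claimed $O(|\nu|^{mk'})$. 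After multiplying by $|\nu|^{-1}$ this gives $O(|\nu|^{k'-1})$, which does not tend to zero since $k'=\max(k,1)\geq 1$. The assumption says nothing about how the mass of $|\widetilde{\bf h}'|$ is distributed inside $[a^{-1},a]$: for instance a kernel behaving like ${\bf h}(t)=(\pi t)^{-1}\bigl(h_{0}+|\ln t|^{-\alpha_0}\sin(t^{-k})\bigr)$ near $t=0$ satisfies \eqref{eq:bb} and Assumption~\ref{bad} but concentrates essentially all of its derivative mass near the left endpoint of $[|\nu|^{-1},|\nu|^{m}]$, so your middle term genuinely diverges there.

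The repair is immediate and recovers the proof given in the paper: cut symmetrically at $a^{-1}$ and $a$ with $a=|\nu|^{1/(2k')}$ (equivalently, move your lower cut from $|\nu|^{-1}$ to $|\nu|^{-m}$). The lower tail $\int_{0}^{a^{-1}}$ still contributes $O(|\ln a|^{-\beta_0})=O\bigl(|\ln|\nu||^{-\beta_0}\bigr)$, because $|\ln a|$ is a fixed positive multiple of $|\ln|\nu||$, while Assumption~\ref{bad} now applies directly to the middle interval and gives $|\nu|^{-1}\int_{a^{-1}}^{a}|\widetilde{\bf h}'|\,dt=O(|\nu|^{-1}a^{k'})=O(|\nu|^{-1/2})$. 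With this single change of cut points your argument is correct and coincides with the paper's.
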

  
    \begin{proof} 
    By definition \eqref{eq:MN6}, the function $ \widetilde{\bf h}(t)$ also satisfies Assumption~$\ref{bad}$.
    Integrating by parts, we see that
    \[
\big| \int_{a^{-1}}^a \widetilde{\bf h}(t) e^{i \nu t} dt\big|\leq C |\nu|^{-1} a^{k}.
\]
where $a\to \infty$. It follows from estimate    \eqref{eq:appL} on $ \widetilde{\bf h}(t)$ for $t\to 0$ and $t\to \infty$  that
 \[
\big| \int_0^{a^{-1}}  \widetilde{\bf h} (t) e^{i \nu t} dt\big|+ \big| \int_a^{\infty}  \widetilde{\bf h}(t) e^{i \nu t} dt\big| \leq C |\ln a|^{-\beta_{0}} .
\]
Choosing, for example, $a= |\nu|^{1/ (2k)}$, we obtain \eqref{eq:MN2}.
        \end{proof} 
        
        Let us now put 
         \[
\psi(\nu)= { h}_{\infty} \psi_{\infty}(\nu) + \sum_{j=1}^{N_{0}} { h}_{j} \psi_{\phi_{j}, b_{j}}(\nu) 
+ {  h}_{0} \psi_{0}(\nu) + \ti{\psi} (\nu).
\]
It follows from formula \eqref{eq:MN6} that
 $\widehat{\bf H}=\Phi {\bf H}(\psi) \Phi^*$. Now we   can apply the results of subs.~6.4 to the operators
${\bf H}(\psi)$ and $h_{\infty}  {\bf H}(\psi_{\infty})$, $h_j  {\bf H}(\psi_{\phi_{j}, b_{j}})$, $h_0  {\bf H}(\psi_0)$. This yields the following result.

 \begin{theorem}\label{LL}
Let $\wh{\bf H}$ be the Hankel operator with   kernel  ${\bf h} \in L^1_{\rm loc} ({\Bbb R}_+)$
satisfying conditions \eqref{eq:bb1}, \eqref{eq:bb} and Assumption~$\ref{bad}$. We
suppose that   the numbers $b_1,\ldots, b_{N_{0}}\in {\Bbb R}_+\setminus \{0\}$ are distinct, the phases $\phi_j \in [0, \pi) $, $j=1,\ldots, N_0$, as well as the amplitudes $h_n \in {\Bbb R}$, $n=1,\ldots, N$, are arbitrary.

  $1^0$
 If   $\alpha_{0}> 2$, then  the 
operator $\wh{\bf H}^{\rm (ac) } $   is unitarily equivalent to the orthogonal sum
\begin{equation}
{\sf A}_{(0, h_{0})} \oplus {\sf A}_{(0, h_{\infty})} 
  \oplus \bigoplus_{j=1}^{N_{0}} {\sf A}_{(-h_{j}, h_{j})} .
\label{eq:MN2s}\end{equation}
Moreover, the wave operators $W_{\pm}(\wh{\bf H}, h_{0} \wh{\bf H}_{0})$, $W_{\pm}(\wh{\bf H}, h_{\infty} \wh{\bf H}_{\infty})$
  and $W_{\pm}(\wh{\bf H}, h_{j} \wh{\bf H} ( \theta_{j},\varphi_{j}))$, $j=1,\ldots, N_{0}$,  exist, their ranges are mutually orthogonal, and their orthogonal sum exhausts the subspace $   {\cal H}^{\rm (ac) }(\wh{\bf H})$.

    $2^0$ If   $\alpha_{0}> 3$, then  the singular continuous spectrum of $\wh{\bf H}$ is empty and its eigenvalues different from     the points $0$, $h_0 $,  $h_{\infty} $ and $ \pm h_{j}$  have finite multiplicities and may accumulate only to these points. 
 \end{theorem}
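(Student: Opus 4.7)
My plan is to reduce the theorem to the results already established in subsection~6.4 for Hankel operators on $\mathbb{H}^2_+(\mathbb{R})$. Since $\widehat{\bf H} = \Phi \mathbf{H}(\psi)\Phi^*$ whenever $\hat\psi = (2\pi)^{1/2}\mathbf{h}$, the task is to produce a piecewise continuous symbol $\psi$ whose singular structure matches the conclusions of Theorem~\ref{LL} and whose regular part satisfies the logarithmic H\"older hypothesis required in subsection~6.4.

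First I would define the model kernels $\mathbf{h}_\infty, \mathbf{h}_j, \mathbf{h}_0$ exactly as introduced before \eqref{eq:MN6}, form the remainder $\widetilde{\mathbf{h}}$ by \eqref{eq:MN6}, and set $\widetilde{\psi}$ by \eqref{eq:MN7}. The total symbol is
\[
\psi(\nu) = h_\infty \psi_\infty(\nu) + \sum_{j=1}^{N_0} h_j \psi_{\phi_j,b_j}(\nu) + h_0 \psi_0(\nu) + \widetilde{\psi}(\nu),
\]
where $\psi_\infty = 2i\zeta$, $\psi_{\phi_j,b_j}(\nu) = 2e^{-i\phi_j}\zeta(\nu+b_j) - 2e^{i\phi_j}\zeta(\nu-b_j)$, and $\psi_0$ is the symbol of $\widehat{\bf H}_0$ computed before Lemma~\ref{appl2a}. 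By the asymptotic subtractions \eqref{eq:bb1} and \eqref{eq:bb}, the remainder satisfies $\widetilde{\mathbf{h}}(t) = O(t^{-1}|\ln t|^{-\alpha_0})$ both at $0$ and at $\infty$, so in particular $\widetilde{\mathbf{h}} \in L^1(\mathbb{R}_+)$.

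Next I would verify that $\psi$ satisfies the hypotheses of the version of Theorem~\ref{thm.g1a} transcribed in subsection~6.4. By Lemma~\ref{appl1L} (applied to $\widetilde{\mathbf{h}}$ on the half-line, but equally usable after extending by zero since the argument uses only bounds on $\widetilde{\mathbf{h}}$ at $\infty$), $\widetilde{\psi}$ is globally logarithmically H\"older continuous with exponent $\beta_0 = \alpha_0 - 1$. Lemma~\ref{appl1Lz}, which uses Assumption~\ref{bad}, gives the additional bound $\widetilde{\psi}(\nu) = O(|\ln|\nu||^{-\beta_0})$ as $|\nu|\to\infty$, so $\widetilde{\psi}(+\infty) = \widetilde{\psi}(-\infty) = 0$ with the correct logarithmic decay. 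Consequently the full symbol $\psi$ is continuous apart from jumps at $0$, $\pm b_1, \ldots, \pm b_{N_0}$, and at infinity, with jumps exactly $2ih_0$, $2h_j e^{-i\phi_j}$, $-2h_j e^{i\phi_j}$ and $2ih_\infty$ respectively, and at each such point the symbol satisfies the one-sided logarithmic H\"older condition with exponent $\beta_0$.

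Having verified the hypotheses, I would invoke the reformulations stated in subsection~6.4: when $\beta_0 > 1$ (that is, $\alpha_0 > 2$) the operator $\mathbf{H}(\psi)^{\rm(ac)}$ is unitarily equivalent to the orthogonal sum of multiplication operators on $\Delta_0, \Delta_\infty, \Delta_j$, which after identifying these intervals with $(0,h_0)$, $(0,h_\infty)$, $(-h_j,h_j)$ gives \eqref{eq:MN2s}. The existence, isometry, mutual orthogonality of ranges, and asymptotic completeness of the wave operators then follow from the reformulation of Theorem~\ref{thm.g1} applied to model symbols $h_0\psi_0$, $h_\infty\psi_\infty$, $h_j\psi_{\phi_j,b_j}$ (whose corresponding Hankel operators are $h_0\widehat{\bf H}_0$, $h_\infty\widehat{\bf H}_\infty$, $h_j\widehat{\bf H}_{\phi_j,b_j}$ after pulling back by $\Phi$), together with the multiplication theorem for wave operators as used in the proof of Theorem~\ref{G}. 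Part~$2^0$ requires $\beta_0 > 2$, that is $\alpha_0 > 3$, and then the corresponding statements about the singular continuous spectrum and accumulation of eigenvalues of $\mathbf{H}(\psi)$ transfer immediately to $\widehat{\bf H}$ via $\Phi$.

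The main obstacle is ensuring that $\widetilde{\psi}$ has the right behavior at infinity: unlike the situation on $\mathbb{T}$, a globally logarithmically H\"older continuous $\widetilde{\psi}$ need not satisfy the one-sided H\"older condition at the \emph{point at infinity} required by the subsection~6.4 framework, which is why Assumption~\ref{bad} and Lemma~\ref{appl1Lz} are indispensable. Once this one point is settled, everything else is a straightforward transcription through the conformal correspondence between $\mathbb{H}^2_+(\mathbb{T})$ and $\mathbb{H}^2_+(\mathbb{R})$.
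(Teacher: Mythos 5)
Your proposal follows the paper's own proof essentially verbatim: the same model kernels and remainder \eqref{eq:MN6}, the same symbol decomposition, Lemmas~\ref{appl1L} and \ref{appl1Lz} to control $\ti{\psi}$, and the same transfer to the subsection~6.4 framework via $\Phi$. The only slip is that you have swapped the locations of the two real jumps: the kernel singularity at $t=0$ produces the jump $2ih_{0}$ of the symbol at $\nu=\infty$ (via $\psi_{0}$), while the $h_{\infty}$ term in \eqref{eq:bb1} produces the jump $2ih_{\infty}$ at $\nu=0$ (via the Mehler symbol $2i\zeta$); this labelling error does not affect the conclusion.
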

 
 \begin{remark}\label{LLR}
  If there is no singularity at the point $t=0$, that is, ${\bf h} \in L^1  (0,r)$ for   $r<\infty$, then  condition  \eqref{eq:bb}   and Assumption~\ref{bad} disappear and the term ${\sf A}_{(0, h_{0})} $ in \eqref{eq:MN2s} should be omitted. Indeed, in this case the point $\nu=\infty$ is not singular for the symbol of the operator  $\Phi^* \widehat{\bf H}\Phi  $ so that we do not need to verify \eqref{eq:MN2} 
 and hence  condition  \eqref{eq:MN7} is not required.
  \end{remark}

 \begin{remark}\label{LLY}
 The case $h_{0}=h_{\infty}$, $h_{j}=0$ for all $j=1,\ldots, N_{0}$ was considered in \cite{Y2}. It was shown there that the assertion $1^0$ of Theorem~\ref{LL} holds true for $\alpha_{0}>1$ and the assertion $2^0$  -- for $\alpha_{0}>2$. 
 Assumption~\ref{bad} was also not required. Therefore it can  be expected that
 the conditions of Theorem~\ref{LL} are  not  optimal. The same remark applies to Theorem~\ref{L}.
 We also note that using the Mourre method  J.~S.~Howland \cite{Howland} obtained the spectral results (but not  the results about the wave operators) of  Theorem~\ref{LL} assuming that  $h_{j}=0$ for all $j=1,\ldots, N_{0}$ but admitting  that 
 $h_{0}\neq h_{\infty}$.
 \end{remark}

 As far as earlier results about Hankel operators $\widehat{\bf H}$ with oscillating kernels are concerned, we mention the paper \cite{KM}  where the case ${\bf h}(t)= 2 (\pi t)^{-1} \sin (bt)$ was considered.  Obviously, for different $b>0$, these operators are unitarily equivalent to each other.
 With a help of the results of \cite{Ro}, it was proven in \cite{KM}   that the spectrum of such $\widehat{\bf H}$ is  a.c.  simple   and coincides with the interval $[-1,  1]$. This result is of course consistent with  Theorem~\ref{LL}.



\end{document}